\newtheorem{theorem}{Theorem}[section]
\newtheorem{lemma}{Lemma}[section]
\newtheorem{corollary}{Corollary}[section]
\newtheorem{proposition}{Proposition}[section]
\newtheorem*{theorem*}{Theorem}
\theoremstyle{definition}
\numberwithin{equation}{subsection}
\newcommand{\ignore}[1]{}
\newcommand{\mynote}[1]{}
\begin{document}
\setcounter{section}{0}
\title{\bf $R$-triviality of some exceptional groups}
\author{Maneesh Thakur \\ \small {Indian Statistical Institute, 7-S.J.S. Sansanwal Marg} \\ \small {New Delhi 110016, India} \\ \small {e-mail: maneesh.thakur@
gmail.com}}
\date{}
\maketitle
\begin{abstract}
\noindent
\it{The main aim of this paper is to prove $R$-triviality for simple, simply connected algebraic groups with Tits index $E_{8,2}^{78}$ or $E_{7,1}^{78}$, defined over a field $k$ of arbitrary characteristic. Let $G$ be such a group. We prove that there exists a quadratic extension $K$ of $k$ such that $G$ is $R$-trivial over $K$, i.e., for any extension $F$ of $K$, $G(F)/R=\{1\}$, where $G(F)/R$ denotes the group of $R$-equivalence classes in $G(F)$, in the sense of Manin (see \cite{M}). As a consequence, it follows that the variety $G$ is retract $K$-rational and that the Kneser-Tits conjecture holds for these groups over $K$. Moreover,  
$G(L)$ is projectively simple as an abstract group for any field extension $L$ of $K$. In their monograph (\cite{TW}) J. Tits and Richard Weiss conjectured that for an Albert division algebra $A$ over a field $k$, its structure group $Str(A)$ is generated by scalar homotheties and its $U$-operators. This is known to be equivalent to the Kneser-Tits conjecture for groups with Tits index $E_{8,2}^{78}$. We settle this conjucture for Albert division algebras which are first constructions, in affirmative. 
These results are obtained as corollaries to the main result, which shows that if $A$ is an Albert division algebra which is a first construction and $\Gamma$ its structure group, i.e., the algebraic group of the norm similarities of $A$, 
then $\Gamma(F)/R=\{1\}$ for any field extension $F$ of $k$, i.e., $\Gamma$ is $R$-trivial.}
\end{abstract} 
\noindent
\small{\it{Keywords: Exceptional groups, Algeraic groups, Albert algebras, Structure group, Kneser-Tits conjecture}}  

\section{\bf Introduction}
The main aim of this paper is to prove $R$-triviality of certain exceptional algebraic groups. More precisely, let $G$ be a simple, simply connected algebraic group defined over a field $k$ of arbitrary characteristic, with Tits index $E^{78}_{8,2}$ or $E^{78}_{7,1}$ (see \cite{T3} for the index notation). Groups with these indices exist over a field $k$ if $k$ admits a central division algebra of degree $3$ with non-surjective reduced norm, or if $k$ admits degree $3$ central division algebras with unitary involutions over a quadratic extension of $k$, with non-surjective reduced norm map (see \cite{T1}, 3.2, 3.3.1). Note also that the groups with Tits indices $E^{78}_{8,2},~E^{78}_{7,1}$ do not exist over finite fields, number fields, algebraically closed fields, local fields, field of reals. However, these exist, for example, over $\mathbb{Q}(t)$, the function field over $\mathbb{Q}$ in one variable.

Groups with index $E^{78}_{8,2}$ are classified by isotopy classes of Albert division algebras over $k$ and their associated buildings are Moufang hexagons defined by a hexagonal system of type $27/K$ where $K$ is either a quadratic extension of $k$ or $K=k$ (see \cite{TW}, \cite{TH}). 
Groups with index $E^{78}_{7,1}$ are classified by isotopy classes of Albert division algebras and they are associated to Moufang sets $\mathbb{M}(A)$, for $A$ an Albert division algebra (e.g. see \cite{TS} or \cite{BDS}). 

Let $G$ be an algebraic group over $k$, with Tits index as one of the above mentioned indices. We prove that there exists a quadratic field extension $K$ of $k$ such that $G$ is $R$-trivial over $K$, i.e., for any field extension $L$ of $K$, $G(L)/R=\{1\}$ in the sense of Manin (\cite{M}). Since $E_8$ has trivial center, it follows that when $G$ has index $E^{78}_{8,2}$, there is a quadratic extension $K$ of $k$ such that $G(L)$ is a simple (abstract) group for any field extension $L$ of $K$. It is well known (see \cite{T1}) that the anisotropic kernels of these groups correspond (upto a central torus) to the structure groups of Albert division algebras over $k$. 

Albert division algebras over a field $k$ are obtainable from two (rational) constructions due to Tits, called the {\bf first construction} and the {\bf second construction}. We prove that if the anisotropic kernel of $G$ (for $G$ as above) comes from a first construction Albert division algebra, then $G$ is $R$-trivial over $k$. In this situation $G$ splits over a cubic extension of $k$. 

The {\bf Kneser-Tits conjecture} predicts that if $G$ is a connected, simple, simply 
connected algebraic group, defined and isotropic over a field $k$, then $G(k)/G(k)^{\dagger}=\{1\}$, where $G(k)^{\dagger}$ is the subgroup of $G(k)$ generated by the $k$-rational points of the unipotent radicals of $k$-parabolics of $G$ (see \cite{PRap}, \S 7.2, \cite{G}). By a theorem of Gille, this is equivalent to proving $G(k)/R=\{1\}$ (\cite{G}, Thm. 7.2). 

A conjecture of J. Tits and Richard M. Weiss asserts that if $A$ is an Albert division algebra over a field $k$, then its structure group $Str(A)$ is generated by scalar homotheties and $U$-operators (defined in \S 2) of $A$ (see \cite{TW}, 37.41, 37.42 and page 418). This conjecture is equivalent to the Kneser-Tits conjecture for groups with Tits index $E^{78}_{8,2}$ (see \cite{TW}). 

Our results prove the {\bf Tits-Weiss conjecture} when $A$ is a first Tits construction. For an extensive survey and new results on the Kneser-Tits problem, we refer the reader to (\cite{G}) along with the classics (\cite{T0}, {\cite{T2}) and (\cite{PRap}) and the paper (\cite{P-R1}) for reduction to the relative rank one case. We mention some recent results: 
The case of simply connected group with index $^2E^{29}_{6,1}$ was settled in (\cite{Gar}), where a modern version (in terms of $W$-triviality) of F. D. Veldkamp's proof (\cite{V1}, \cite{V2}) is presented. 
The case of the trialitarian forms of $D_4$ over global fields was settled in (\cite{Pra}). The paper {\cite{Pra}) also presents a description of the Whitehead group (over an infinite field $k$) of an absolutely simple, simply connected algebraic group of type $^{3,6}D_4$ of $k$-rank $1$.  
The result for such groups over a general field is covered in (\cite{G}, \S 6.1). The Kneser-Tits conjecture for groups with Tits index $E^{66}_{8,2}$ over an arbitrary field was settled in (\cite{P-T-W}). 

This paper is the second in a series of papers dedicated to this problem. In the first paper (\cite{Th-1}) we proved, among other results, that the conjecture is true for {\bf pure first constructions}. We had also proved that for such an Albert division algebra $A$ over $k$, the group {\bf Aut}$(A)$ is $R$-trivial. 

The problem of $R$-equivalence for semisimple algebraic groups has been studied by several mathematicians, see for example (\cite {CM}, \cite{CM-2}, \cite{CS}, \cite{CT}, \cite{B-S}) and the references therein. This connects with some algebro-geometric properties of algebraic groups; e.g. if $G$ is a simple, simply connected algebraic group, isotropic and defined over a field $k$, then $G$ is $R$-trivial if and only if the variety of $G$ is retract $k$-rational (see \cite{G}, Thm. 5.9). 
\vskip1mm
\noindent
{\bf Remark :} For a simple, simply connected algebraic group $G$ with Tits index $E^{78}_{8,2}$ or $E^{78}_{7,1}$, the (semisimple) anisotropic kernel $H$ of $G$ is isomorphic to {\bf Isom}$(A)$, the full group of isometries of the norm form of an Albert division algebra $A$ over $k$. The anisotropic kernel $H$ of $G$ is split by a cubic extension of $k$ if and only if $A$ is a first Tits construction (see Proposition 2.2 and Corollary 2.1). 
\vskip1mm 
\noindent
{\bf Retract rationality :} By (\cite{G}), our results also prove that if $G$ is a simple, simply connected algebraic group defined over $k$ with Tits index $E_{8,2}^{78}$ or $E_{7,1}^{78}$, then 
there exists a quadratic extension $K$ of $k$ such that the underlying variety of $G$ is retract $K$-rational. If the anisotropic kernel of $G$ is split over a cubic extension of $k$, then $G$ is retract $k$-rational.     
\vskip1mm
\noindent
{\bf Plan of the paper :} In this paper, $k$ will always denote a base field, infinite of arbitrary characteristic. 

In \S 2, we give a quick introduction to Albert algebras, Tits processes and the structure group of an Albert algebra. A description of strongly inner groups of type $E_6$ and those with Tits index $E^{78}_{8,2}$ and $E^{78}_{7,1}$ is given in detail. 
This is followed by a brief introduction to $R$-equivalence on algebraic groups. At the end of \S 2, we state our main results and some of their consequences. 

\S 3 and \S 4 contain some of the key results of the paper. \S 3 is mainly concerned with extension of automorphisms of a $9$-dimensional subalgebra of an Albert algebra $A$ to an automorphism of $A$ and also discusses $R$-triviality of some subgroups of 
{\bf Aut}$(A)$. \S 4 contains results on extending norm similarities of $9$-dimensional subalgebras to those of the Albert algebra (see Theorem \ref{ext-norm-sim}). Some of the most important results on $R$-triviality are proved \S 5. Here we develop the $R$-triviality results on various subgroups of {\bf Aut}$(A)$ and {\bf Str}$(A)$ for Albert algebras in general as well as for the special case of first constructions. These results culminate in the main theorem (Theorem \ref{Main}) , where we prove {\bf Str}$(A)$ is $R$-trivial for first Tits construction Albert algebras. \S 6 discusses a proof of the $R$-triviality of groups of type $E^{78}_{8,2}$, with anisotropic kernel split over a cubic extension. A proof of the Tits-Weiss conjecture for Albert division algebras, arising from first Tits construction is presented (Corollary \ref{TW}). In \S 7, we discuss groups with Tits index $E^{78}_{7,1}$, having anisotropic kernel split over a cubic extension. We prove that such groups are $R$-trivial. We end \S with some $R$-triviality results for reduced Albert algebras. 

In \S 8 some retract rationality results are derived. We prove, as a consequence of our results in \S 6 and \S 7, that any group $G$ whose Tits index is $E^{78}_{7,1}$ or $E^{78}_{8,2}$ and the anisotropic kernel is split over a cubic extension of $k$, is retract $k$-rational.  

\section{\bf Preliminaries and results} For much of the preliminary material on Albert algebras, we refer the reader to (\cite{P2}), (\cite{SV}) and (\cite{KMRT}). We recall below, very briefly, some notions we will need. All base fields considered in this paper will be 
assumed to be infinite and of arbitrary characteristic. To define the notion of an Albert algebra over a field of arbitrary characteristics, we take the approach via cubic norm structure, as defined in (\cite{PR7} or \cite{PR5}) .\\
\noindent
{\bf Cubic norm structures :} Let $J$ be a finite dimensional vector space over a field $k$. A cubic norm structure on $J$ consists of a tuple $(N,\#, c)$, where $c\in J$ is a base point, called the {\bf identity element} of the norm structure and 
\begin{enumerate} 
\item $N:J\rightarrow k$ is a {\bf cubic form} on $J$,
\item $N(c)=1$,
\item the {\bf trace form} $T:J\times J\rightarrow k$, defined by $T(x,y):=-\Delta^x_c\Delta^y log N$, is nondegenerate,
\item the {\bf adjoint} map $\#:J\rightarrow J$ defined by $T(x^{\#},y)=\Delta^y_x N$, is a {\bf quadratic map} such that 
\item $x^{\#\#}=N(x)x$,
\item $c^{\#}=c$,
\item $c\times x=T(x)c-x$, where $T(x):=T(x,c)$ is the {\bf trace} of $x$ and $x\times y:=(x+y)^{\#}-x^{\#}-y^{\#}$,
\end{enumerate}
and these hold in all scalar extensions of $J$. In the above, $\Delta^y_xf$ denotes the directional derivative of a polynomial function $f$ on $J$, in the direction $y$, evaluated at $x$. For differential calculus of rational maps and related notation, we refer to (\cite{J1}, Chap. VI, Sect. 2). Let, for $x\in J$, 
$$U_x(y):=T(x,y)x-x^{\#}\times y,~y\in J.$$ 
Then with $1_J:=c$ and $U_x$ as above, one gets a unital {\bf quadratic Jordan algebra} structure on $J$ (see \cite{McK}), sometimes denoted by $J(N,c)$. The linear operators $U_x$ as defined above, are called the {\bf $U$-operators} of the Jordan algebra $J$. An element $x\in J$ is {\bf invertible} if and only if $N(x)\neq 0$ and in that case one has $x^{-1}=N(x)^{-1}x^{\#}$. Recall that $J(N,c)$ is called a {\bf division algebra} if $U_x$ is surjective for all $x\neq 0$, which is equivalent to $N(x)\neq 0$ for all $x\neq 0$. We list some examples and notation that we will use in the paper: \\
\vskip1mm
\noindent
{\bf Example 1.} Let $D$ denote a separable associative algebra over $k$, of degree $3$. Let $N_D$ denote its norm and $T_D$ the trace. Then with $1_D$ the unit element of $D$ and $\#:D\rightarrow D$ the (classical) adjoint map, we get a quadratic Jordan algebra structure on $D$, which we will denote by $D_+$. \\
\noindent
{\bf Example 2.} Let $(B,\sigma)$ be a separable associative algebra over $k$ with an involution $\sigma$ of the second kind (over its center). With the unit element $1$ of $B$ and the restriction of the norm $N_B$ of $B$ to $(B,\sigma)_+:=\{b\in B|\sigma(b)=b\}$, we get a cubic norm structure on $(B,\sigma)_+$ which is a substructure of $B_+$. \\
\noindent
{\bf Example 3.} Let $C$ be an octonion algebra over a field $k$ with $n$ as its norm map and $t$ its trace. Let $x\mapsto \overline{x}$ denote its canonical involution. 
Let $\gamma=\text{diag}(\gamma_1,\gamma_2,\gamma_3)\in GL_3(k)$ and 
$$\mathcal{H}_3(C,\gamma):=\{x=(x_{ij})\in M_3(C)|\gamma^{-1}\overline{x}^t\gamma=x~\text{and}~x_{ii}\in k,~1\leq i\leq 3\},$$ 
where $M_3(C)$ is the algebra of $3\times 3$ matrices with entries in $C$ and $x^t$ is the transpose of $x$. It follows that any $x\in\mathcal{H}_3(C,\gamma)$ has the form 
$$x=\left(\begin{matrix}
\alpha_1&\gamma_2c&\gamma_3\overline{b}\\
\gamma_1\overline{c}&\alpha_2&\gamma_3a\\ 
\gamma_1b&\gamma_2\overline{a}&\alpha_3\end{matrix}\right) ,$$
where $\alpha_i\in k,~ 
a,b,c\in C$. Define 
$$N(x):=\alpha_1\alpha_2\alpha_3-\gamma_2\gamma_3\alpha_1n(a)-\gamma_3\gamma_1\alpha_2n(b)-\gamma_1\gamma_2\alpha_3n(c)+\gamma_1\gamma_2\gamma_3 t(abc),$$
$$1:=e_{11}+e_{22}+e_{33},$$ 
and 
$$x^{\#}:=\alpha_2\alpha_3-\gamma_2\gamma_3 n(a)+\alpha_3\alpha_1-\gamma_3\gamma_1 n(b)+\alpha_1\alpha_2-\gamma_1\gamma_2 n(c)~~~~~~~~~~~~~~~~$$
$$+(\gamma_1\overline{bc}-\alpha_1a)[23]+(\gamma_2\overline{ca}-\alpha_2b)[31]+(\gamma_3\overline{ab}-\alpha_3c)[12],~~~~~~~~~~~$$
where $a[ij]:=\gamma_ja e_{ij}+\gamma_i\overline{a}e_{ji}$ in terms of the matrix units. This defines a cubic norm structure (see \cite{PR5}), hence a Jordan algebra, called a {\bf reduced Albert algebra} over $k$. We call a reduced Albert algebra {\bf split} if the coordinate octonion algebra is split, in which case 
$$\mathcal{H}_3(C,\gamma)\cong \mathcal{H}_3(C):=\mathcal{H}_3(C,1).$$

\vskip1mm
\noindent
{\bf Tits process :} {\bf 1.} Let $D$ be a finite dimensional associative $k$-algebra of degree $3$ with norm $N_D$ and trace $T_D$. Let $\lambda\in k^*$. We define a cubic norm structure on the $k$-vector space $D\oplus 
D\oplus D$ by 
$$1:=(1,0,0),~N((x,y,z)):=N_D(x)+\lambda N_D(y)+\lambda^{-1}N_D(z)-T_D(xyz),$$
$$(x,y,z)^{\#}:=(x^{\#}-yz,\lambda^{-1}z^{\#}-xy,\lambda y^{\#}-zx).$$  
The (quadratic) Jordan algebra associated to this norm structure is denoted by $J(D,\lambda)$. We regard $D_+$ as a subalgebra of $J(D,\lambda)$ through the first summand. Recall (see \cite{PR7}, 5.2) that $J(D,\lambda)$ is a division algebra if and only if $\lambda$ is not a norm from $D$. This construction is called the {\bf first Tits process} arising from the parameters $D$ and $\lambda$. 

{\bf 2.} Let $K$ be a quadratic \'{e}tale extension of $k$ and $B$ be a separable associative algebra of degree $3$ over $K$ with a $K/k$ involution $\sigma$. Let $u\in (B,\sigma)_+$ be a unit such that $N_B(u)=\mu\overline{\mu}$ for some $\mu\in K^*$, where $a\mapsto \overline{a}$ denotes the nontrivial $k$-automorphism of $K$. We define a cubic norm structure on the $k$-vector space $(B,\sigma)_+\oplus B$ by the formulae:
$$N((b,x)):=N_B(b)+T_K(\mu N_B(x))-T_B(bxu\sigma(x)),$$
$$(b,x)^{\#}:=(b^{\#}-xu\sigma(x), \overline{\mu}\sigma(x)^{\#}u^{-1}-bx),~1:=(1_B,0).$$ 
The (quadratic) Jordan algebra corresponding to this cubic norm structure is denoted by $J(B,\sigma,u,\mu)$. We note that $(B,\sigma)_+$ can be identified with a subalgebra of $J(B,\sigma, u,\mu)$ through the first summand. Recall (see \cite{PR7}, 5.2) that $J(B,\sigma,u,\mu)$ is a division algebra if and only if $\mu$ is not a norm from $B$. This construction is called the {\bf second Tits process} arising from the parameters $B,u,\mu$. 

Note that when $K=k\times k$, then $B=D\times D^{\circ}$ and $\sigma$ is the switch involution, where $D^{\circ}$ is the opposite algebra of $D$. In this case, the second construction $J(B,\sigma,u,\mu)$ can be identified with a first construction $J(D,\lambda)$. 
\vskip1mm
\noindent
{\bf Albert algebras :} In the Tits process {\bf (1)} described above, if we start with a central simple algebra $D$ and $\lambda\in k^*$, we get a first Tits construction Albert algebra $A=J(D,\lambda)$ over $k$. 
Similarly, in the Tits process {\bf (2)} above, if we start with a central simple algebra $(B,\sigma)$ with center a quadratic \'{e}tale algebra $K$ over $k$ and an involution $\sigma$ of the second kind, $u,\mu$ as described above, we get a second Tits construction Albert algebra $A=J(B,\sigma,u,\mu)$ over $k$. 
\noindent
One knows that all Albert algebras can be obtained via Tits constructions. 

An Albert algebra is a division algebra 
if and only if its (cubic) norm $N$ is anisotropic over $k$ (see \cite{KMRT}, \S 39). We also recall here that a first construction Albert algebra $A$ is either a division algebra, or it is split, i.e., is isomorphic to $\mathcal{H}_3(C)$ with $C$ the split octonion algebra $k$. Moreover, any Albert division algebra arising from a first construction is split over any of its cubic subfields.

If $A=J(B,\sigma,u,\mu)$ as above, then $A\otimes_kK\cong J(B,\mu)$ as $K$-algebras, where $K$ is the center of $B$ (see \cite{KMRT}, 39.4). 
\vskip1mm
\noindent
{\bf Hexagonal systems :} A hexagonal system is a cubic norm structure whose norm is anisotropic. We say a hexagonal system is of {\bf type} $27/k$ if it arises from a first Tits construction Albert division algebra over $k$, and of type $27/K$, for $K$ a quadratic field extension of $k$, if it arises from a Tits second construction Albert division algebra, corresponding to $(B,\sigma)_+$ and $K$ is the center of $B$.
\vskip1mm
\noindent  
{\bf Pure first and second constructions :} Let $A$ be an Albert algebra over $k$. If $A$ is a first construction, but not a second construction with respect to a degree $3$ central simple algebra whose center is a quadratic field extension of $k$, then we call $A$ a pure first construction Albert algebra. Similarly pure second construction Albert algebras are those which are not obtainable through the first construction. 
\vskip1mm
\noindent
We recall at this stage that if $A$ is an Albert division algebra, then any subalgebra of $A$ is either $k$ or a cubic subfield of $A$ or of the form $(B,\sigma)_+$ for a degree $3$ central simple algebra $B$ with an involution $\sigma$ of the second kind over its center $K$, a quadratic \'{e}tale extension of $k$ (see \cite{J1}, Chap. IX, \S 12, Lemma 2,  \cite{PR5}). 
\vskip1mm
\noindent
{\bf Norm similarities of Albert algebras :} Let $A$ be an Albert algebra over $k$ and $N$ denote its norm map. A {\bf norm similarity} of $A$ is a bijective linear map $f:A\rightarrow A$ such that $N(f(x))=\nu(f)N(x)$ for all $x\in A$ and some $\nu(f)\in k^*$. Since we are working over $k$ infinite, norm similarities are synonimous with isotopies of Albert algebras (see \cite{J1}, Chap. VI, Thm. 6, Thm. 7). 
\vskip1mm
\noindent
{\bf Adjoints and $U$-operators :} Let $A$ be an Albert algebra over $k$ and $N$ be its norm map. Recall, for $a\in A$ the $U$-operator $U_a$ is given by $U_a(y):=T(a,y)a-a^{\#}\times y,~y\in A$. When $a\in A$ is invertible, it can be shown  that $U_a$ is a norm similarity of $A$, in fact, for any $x\in A,~N(U_a(x))=N(a)^2N(x)$ when $a$ is invertible. 

For a central simple algebra $D$ of degree $3$ over $k$, the adjoint map $a\mapsto a^{\#}$ satisfies $N_D(a)=aa^{\#}=a^{\#}a,~\forall~a\in D$. It can be shown that $(xy)^{\#}=y^{\#}x^{\#}$ for all $x,y\in D$. It also follows that $N(x^{\#})=N(x)^2$. 
\vskip1mm
\noindent
{\bf Algebraic groups from Albert algebras :} In this paper, for a $k$-algebra $X$ and a field extension $L$ of $k$, We will denote by $X_L$ the $L$-algebra $X\otimes_kL$. Let $A$ be an Albert algebra over $k$ with norm $N$ and $\overline{k}$ be an algebraic closure of $k$. The full group of automorphisms 
{\bf Aut}$(A):=Aut(A_{\overline{k}})$ is a simple algebraic group of type $F_4$ defined over $k$ and all simple groups of type $F_4$ defined over $k$ arise this way. When $A$ is a division algebra, {\bf Aut}$(A)$ is anisotropic over $k$. We will denote the group of $k$-rational points of {\bf Aut}$(A)$ by $Aut(A)$. Note that $A$ is a division algebra precisely when the norm form $N$ of $A$ is anisotropic (see \cite{Spr}, Thm. 17.6.5). 

The full group {\bf Str}$(A)$ of norm similarities of $N$, called the {\bf structure group} of $A$, is a connected reductive group over $k$, of type $E_6$. We denote by $Str(A)$ the group of $k$-rational points {\bf Str}$(A)(k)$. The $U$-operators $U_a$, for $a\in A$ invertible, are norm similarities and generate a normal subgroup of $Str(A)$, called the 
{\bf inner structure group} of $A$, which we will denote by $Instr(A)$.
 The commutator subgroup {\bf Isom}$(A)$ of {\bf Str}$(A)$ is the full group of isometries of $N$ and is a simple, simply connected group of type $E_6$, anisotropic over $k$ if and only if $N$ is anisotropic, if and only if $A$ is a division algebra (see \cite{SV}, \cite{Spr}). Note that {\bf Aut}$(A)$ is the stabilizer of $1\in A$ in {\bf Str}$(A)$.

On occasions, we will need to treat an algebra $X$ as an algebraic variety (particularly when dealing with $R$-equivalence etc.). In such a situation, if no confusion is likely, we shall continue to denote the underlying (affine) space by $X$. Morphisms $X\rightarrow Y$ or rational maps would carry similar meanings. Base change of an object $X$ defined over a base field $k$ to an extension $L$ of $k$ will be denoted by $X_L$. 
We record the following observations with proofs :
\begin{proposition}\label{deg-6} Let $A$ be an Albert division algebra over $k$. Then {\bf Aut}$(A)$, {\bf Str}$(A)$ and {\bf Isom}$(A)$ are 
split by a degree $6$ extension of $k$. 
\begin{enumerate}
\item $A$ is a first construction if and only if $A$ ( hence {\bf Aut}$(A)$ ) has a degree $3$ splitting field and, $A$ is a pure first construction if and only if every minimal splitting field of $A$ is cubic cyclic.
\item $A$ is a pure second construction if and only if $A$ ( hence {\bf Aut}$(A)$ ) has no splitting field of degree $3$ over $k$. 
\end{enumerate}  
\end{proposition}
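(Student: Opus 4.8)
**The plan is to prove the splitting-degree claim and then establish the two equivalences by analyzing minimal splitting fields through the known structure theory of Albert algebras.**

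First I would establish that $\mathbf{Aut}(A)$, $\mathbf{Str}(A)$ and $\mathbf{Isom}(A)$ are split by a degree $6$ extension. Since these groups are all constructed from the norm structure of $A$, it suffices to exhibit a degree $6$ field extension $L/k$ over which $A$ becomes split (i.e. $A_L \cong \mathcal{H}_3(C)$ with $C$ split), because over a splitting field of the Albert algebra the associated group of type $F_4$ or $E_6$ becomes the split form. The key input is the structure of subalgebras recalled in the excerpt: every Albert division algebra $A$ contains either a cubic subfield $E$ (in the first-construction case) or a subalgebra $(B,\sigma)_+$ with center a quadratic étale $K$. In the second-construction case one has $A\otimes_k K \cong J(B,\mu)$ as $K$-algebras; passing to $K$ (degree $2$) reduces $A$ to a first-construction algebra over $K$, and such an algebra is split over any of its cubic subfields (stated in the excerpt), giving a further degree $3$ extension, hence degree $6$ total. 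In the pure first-construction case one instead takes a cubic subfield directly; since $A$ splits over any cubic subfield, a further quadratic extension splitting the residual structure yields degree dividing $6$.

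For part (1), I would argue the forward direction by noting that a first construction $A = J(D,\lambda)$ splits over any cubic subfield of $D$ (by the statement recalled in the excerpt that first-construction Albert division algebras split over their cubic subfields), giving a degree $3$ splitting field. Conversely, if $A$ has a degree $3$ splitting field $E/k$, then over $E$ the algebra becomes reduced, and I would use the Tits-construction classification together with the fact that a second construction genuinely requires a quadratic extension of the center (degree $2$) to reduce to a first construction, so a cubic splitting field forces $A$ to be a first construction. For the ``pure'' refinement, $A$ being a pure first construction means it is \emph{not} obtainable as a second construction over any quadratic field extension; I would show this is equivalent to every minimal splitting field being cubic \emph{cyclic} by relating the Galois-theoretic type of the minimal cubic splitting field to whether the associated degree $3$ central simple algebra $D$ (equivalently its cubic subfields) forces a genuine second-construction datum when the cubic extension is non-cyclic.

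Part (2) should then follow by contraposition from part (1): $A$ is a pure second construction precisely when it is not a first construction, which by part (1) is exactly the assertion that $A$ has no degree $3$ splitting field. \textbf{The main obstacle} I anticipate is the converse direction in part (1), namely showing that the existence of \emph{any} cubic splitting field (not a priori arising as a subfield of a first-construction datum) forces $A$ to be a first construction, and sharpening this to the cyclic-versus-non-cyclic dichotomy for the ``pure'' statement; this requires carefully invoking the classification of Albert algebras by Tits constructions and the behavior of the associated cohomological/Galois invariants under the cubic extension, rather than any direct computation. I would expect to lean on Proposition 2.2 and Corollary 2.1 (referenced in the Remark of the excerpt) to control exactly when the anisotropic kernel splits over a cubic extension.
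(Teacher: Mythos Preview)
Your overall architecture matches the paper's, and the degree $6$ splitting argument is essentially the same (though note that in the first-construction case a cubic subfield already splits $A$ completely --- there is no ``residual structure'' requiring a further quadratic extension).

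The genuine gap is in the converse of (1), which you yourself flag as the main obstacle. Your proposed argument --- that a second construction ``genuinely requires a quadratic extension'' and therefore cannot admit a cubic splitting field --- is not a valid deduction: nothing a priori prevents a second-construction Albert algebra from also splitting over some degree $3$ extension, so this step needs real content. The paper's mechanism is cohomological: if $A$ splits over a cubic extension $L$, then the mod-$2$ invariant $f_3(A)\in H^3(k,\mathbb{Z}/2)$ vanishes over $L$; since $[L:k]$ is odd, Springer's theorem forces $f_3(A)=0$ already over $k$, and by \cite{KMRT}, 40.5 this characterizes first constructions. You never mention $f_3$ or Springer's theorem, and your fallback of ``leaning on Proposition~2.2 and Corollary~2.1'' does not close the gap on its own: those results use exactly the same $f_3$/Springer argument, so invoking them without identifying that ingredient just relocates the missing step rather than supplying it. Similarly, for the ``pure first construction $\Leftrightarrow$ every minimal splitting field is cubic cyclic'' refinement, the paper simply cites \cite{Th-1}, Thm.~2.1; your sketch of relating cyclic versus non-cyclic cubics to second-construction data is too vague to constitute an argument.
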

\begin{proof} Write $A$ as a second construction $A=J(B,\sigma,u,\mu)$ for suitable parameters as decribed in the preliminaries. If $A$ is a first construction, any cubic subfield of $A$ splits $A$ (see \cite{PR2}, Thm. 4.7), hence splits {\bf Aut}$(A)$ and the statement follows. Let $K$ be the center of $B$, which must be a quadratic field extension of $k$ if $A$ is not a first construction. Base changing to $K$ we have $A\otimes K\cong J(B,\mu)$. By the argument presented, there is a cubic extension $L$ of $K$ that splits $A\otimes K$. Hence $A$ is split by the degree $6$ extension $LK$ of $k$. 
To prove ($1$), note that if $A$ is a first construction, $A$ has a cubic splitting field. Conversely, if $A$ has a degree $3$ splitting field $L$ over $k$, then the invariant $f_3(A)\in H^3(k,\mathbb{Z}/2)$ vanishes over $L$ and hence over $k$ by Springer's theorem. Hence $f_3(A)=0$ and $A$ is a first construction (\cite{KMRT}, 40.5). The assertion about pure first construction follows from (\cite{Th-1}, Thm. 2.1). Proof of ($2$) follows from ($1$).
\end{proof}
\begin{proposition}\label{first-tits} Let $A$ be an Albert division algebra over $k$. Then $A$ is a first Tits construction if and only if {\bf Str}$(A)$ (equivalently {\bf Isom}$(A)$) is split by a cubic extension of $k$.
\end{proposition}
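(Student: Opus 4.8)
The plan is to reduce the whole statement to Proposition \ref{deg-6}(1) by establishing the following equivalence, valid over any field extension $L$ of $k$:
$$\mathbf{Str}(A)_L \text{ is split} \iff A_L \text{ is a split Albert algebra}.$$
Granting this, both implications are immediate. If $A$ is a first Tits construction, then by Proposition \ref{deg-6}(1) there is a degree $3$ extension $L/k$ with $A_L$ split (for instance a cubic subfield of $A$, over which $A$ splits by the remark recorded in the preliminaries); the equivalence then shows $\mathbf{Str}(A)$ is split by the cubic extension $L$. Conversely, if $\mathbf{Str}(A)$ is split by a cubic extension $L$, the equivalence gives that $A_L$ is split, so $A$ admits a degree $3$ splitting field and is therefore a first construction by Proposition \ref{deg-6}(1). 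Since $\mathbf{Isom}(A)=[\mathbf{Str}(A),\mathbf{Str}(A)]$ and $\mathbf{Str}(A)$ is an almost direct product of $\mathbf{Isom}(A)$ with a central split torus, $\mathbf{Str}(A)$ is split over $L$ exactly when $\mathbf{Isom}(A)$ is, which accounts for the parenthetical equivalence in the statement.

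The easy half of the displayed equivalence is that if $A_L$ is split then $\mathbf{Str}(A)_L=\mathbf{Str}(A_L)$ is the structure group of the split Albert algebra, hence split. The substance is the converse, and this is where the main effort lies. Over $L$ the algebra $A_L$ is either a division algebra or reduced, of the form $\mathcal{H}_3(C,\gamma)$. If $A_L$ is a division algebra, its norm $N_L$ is anisotropic, so $\mathbf{Isom}(A)_L$ is anisotropic and certainly not split. If $A_L$ is reduced with coordinate octonion algebra $C$, one must see that $\mathbf{Str}(A)_L$ is split precisely when $C$ is split, the non-split octonion case producing a group that is isotropic but still not split; thus $\mathbf{Str}(A)_L$ split forces $C$ split and hence $A_L$ split.

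The cleanest route to the converse, and the one I would favour, bypasses this reduced-algebra analysis and argues instead through the type of the group. When $A$ is not a first construction it is a pure second construction $A=J(B,\sigma,u,\mu)$ with $B$ of degree $3$ over a quadratic field extension $K$ of $k$, and then $\mathbf{Isom}(A)$ is an outer form of type $^2E_6$ whose $*$-action $\mathrm{Gal}(\bar k/k)\to \mathrm{Out}(E_6)=\mathbb{Z}/2\mathbb{Z}$ has kernel $\mathrm{Gal}(\bar k/K)$; equivalently $\mathbf{Isom}(A)$ becomes of inner type over an extension $L$ if and only if $K\subseteq L$. A split group is of inner type over its splitting field, so if a cubic $L/k$ split $\mathbf{Str}(A)$ we would obtain $K\subseteq L$, forcing $2=[K:k]$ to divide $[L:k]=3$, which is impossible. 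Hence no odd-degree, and in particular no cubic, extension can split the structure group of a pure second construction, which is exactly the contrapositive of the remaining implication.

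The main obstacle is therefore the structural identification of the outer type: one must know that the structure (equivalently isometry) group attached to a genuine second Tits construction really is of outer type $^2E_6$ with associated quadratic extension equal to the center $K$ of $B$, and that passing to a splitting field trivialises the $*$-action. Once this input is in place — it is standard from the Tits-process description of groups of type $E_6$ together with the relation $A\otimes_k K\cong J(B,\mu)$ noted in the preliminaries — the degree-coprimality argument closes the proof without any further computation.
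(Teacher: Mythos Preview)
Your preferred route (the $^2E_6$ argument) does not work, because the premise is false: for \emph{every} Albert algebra $A$, first construction or not, the group $\mathbf{Isom}(A)$ is a \emph{strongly inner} form of type $^1E_6$. The paper states this explicitly in the paragraph ``Strongly inner group of type $E_6$'' and again in the description of the anisotropic kernels for $E^{78}_{8,2}$ and $E^{78}_{7,1}$. There is no $*$-action through $\mathrm{Gal}(K/k)$ to obstruct splitting over odd-degree extensions; the quadratic extension $K=Z(B)$ in a second Tits construction does not govern the inner/outer type of the associated $E_6$. So the degree-coprimality contradiction you set up never materialises, and this line of argument has a genuine gap at its core.

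Your first approach is the correct one and is essentially what the paper does, but you stop exactly where the work begins: you assert that if $A_L$ is reduced with coordinate octonion algebra $C$ and $\mathbf{Str}(A)_L$ is split, then $C$ must be split, without justification. The paper supplies the missing mechanism via the mod-$2$ invariant $f_3$. From $\mathbf{Str}(A)_L$ split one gets (\cite{Spr}, Thm.~17.6.5) that $A_L$ is reduced, and then (\cite{Spr}, Thm.~17.6.3) that the stabiliser of $1$, namely $\mathbf{Aut}(A_L)$, is the split $F_4$; hence $f_3(A_L)=f_3(A)\otimes_kL=0$. Since $[L:k]=3$ is odd, Springer's theorem on quadratic forms forces $f_3(A)=0$ already over $k$, and by (\cite{KMRT}, 40.5) this is equivalent to $A$ being a first Tits construction. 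Note that the odd-degree descent via Springer's theorem on $f_3$ is playing precisely the role you hoped the (nonexistent) $^2E_6$ obstruction would play.
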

\begin{proof} First assume that $A$ is an Albert division algebra over $k$ arising from a first Tits construction. Then $A$ is split by any cubic extension $L\subset A$ of $k$ (see \cite{PR2}, Thm. 4.7) and hence, by (\cite{Spr}, Thm. 17.6.3), $L$ also splits the structure group {\bf Str}$(A)$ of $A$. Conversely, if {\bf Str}$(A)$ splits over a cubic extension $L$ of $k$, then, by (\cite{Spr}, Thm. 17.6.5), the Albert algebra $A$ is reduced over $L$. By (\cite{Spr}, Thm. 17.6.3), the stabilizer of $1\in A\otimes_k L$ is the split group of type $F_4$ over $L$ and equals {\bf Aut}$(A\otimes_k L)$. Hence, by (\cite{KMRT}, 40.5), {\bf Aut}$(A\otimes_k L)$ has its mod-$2$ invariant $f_3(A\otimes_k L)=f_3(A)\otimes_k L= 0$. It follows that the coordinatizing octonion algebra of $A\otimes_k L$ is split over $L$. Hence, by Springer's theorem, $f_3(A)=0$. Therefore $A$ must be a first Tits construction over $k$ (\cite{KMRT}, 40.5).    
\end{proof}
\vskip1mm
\noindent
{\bf Strongly inner group of type $E_6$ :} Let $H$ be a simple, simply connected algebraic group defined and strongly inner of type $E_6$ over $k$ (see \cite{T1} for definition). Then $H$ is isomorphic to {\bf Isom}$(A)$, the (algebraic) group of all norm isometries of an Albert algebra $A$ over $k$. Moreover, $H$ is anisotropic if and only if $A$ is a division algebra (see \cite{Spr}, Thm. 17.6.5).  
\vskip1mm
\noindent
{\bf Groups with Tits index $E_{8,2}^{78}$ :} Let $G$ be a simple, simply connected algebraic group with Tits index $E_{8,2}^{78}$ over $k$. The index has associated diagram as below: 
\vskip10mm

\begin{center}
\setlength{\unitlength}{1.0cm}
\begin{picture}(10,-25)\thicklines

\put(0,0){\line(1,0){1}}
\put(1,0){\line(1,0){1}}
\put(2.10,0){\line(0,1){1}}
\put(2,0){\line(1,0){1}}
\put(3,0){\line(1,0){1}}
\put(4,0){\line(1,0){1}}
\put(-0.05,-0.10){$\bullet$}
\put(1.05,-0.10){$\bullet$}
\put(2.00,-0.10){$\bullet$}
\put(3.0,-0.10){$\bullet$}
\put(3.95,-0.10){$\bullet$}
\put(2.00,0.90){$\bullet$}
\put(4.90,-0.10){$\bullet$}
\put(4.92,0){\line(1,0){1}}
\put(4.8,-0.10){$\bigcirc$}
\put(5.85,-0.10){$\bullet$}
\put(5.74,-0.10){$\bigcirc$}
\end{picture}
\end{center}
The anisotropic kernel of such a group is a simple, simply connected, strongly inner group of type $E_6$ and by (\cite{TW}, \cite{T1}), can be realized as the group {\bf Isom}$(A)$ of norm isometries  of an Albert division algebra $A$ over $k$. Let $\Gamma$ denote the Tits building associated to $G$. Then $\Gamma$ is a Moufang hexagon associated to a hexagonal system of type $27/F$, where $F$ is a quadratic field extension $K$ of $k$ (this is when $A$ is written as a second construction $J(B,\sigma,u,\mu)$ with $Z(B)=K$) or $F=k$ (when $A$ is a first construction). The group $G(k)$ is the group of ``linear'' automorphisms of $\Gamma$. The explicit realization of groups of type $E_{8,2}^{78}$ as automorphisms of a Moufang hexagon as above is due to Tits and Weiss and is detailed in their monograph (\cite{TW}).
\vskip1mm
\noindent
{\bf Groups with Tits index $E_{7,1}^{78}$ :} Let $H$ be a simple, simply connected algebraic group over $k$ with Tits index $E_{7,1}^{78}$ over $k$. Then the anisotropic kernel of $H$ is a simple, simply connected strongly inner group of type $E_6$ and is isomorphic to {\bf Isom}$(A)$ for an Albert division algebra $A$ over $k$ (see \cite{T1}). The Tits index of $H$ has associated diagram as below: 
\vskip10mm

\begin{center}
\setlength{\unitlength}{1.0cm}
\begin{picture}(10,-25)\thicklines

\put(0,0){\line(1,0){1}}
\put(1,0){\line(1,0){1}}
\put(2.10,0){\line(0,1){1}}
\put(2,0){\line(1,0){1}}
\put(3,0){\line(1,0){1}}
\put(4,0){\line(1,0){1}}
\put(-0.05,-0.09){$\bullet$}
\put(1.05,-0.09){$\bullet$}
\put(2.00,-0.09){$\bullet$}
\put(3.0,-0.09){$\bullet$}
\put(3.95,-0.09){$\bullet$}
\put(2.00,0.90){$\bullet$}
\put(4.90,-0.10){$\bullet$}
\put(4.80,-0.10){$\bigcirc$}

\end{picture}
\end{center}
Following Max Koecher (\cite{Ko}), one can realize $H$ as follows: fix an Albert divsion algebra $A$ over $k$. Let ${\bf \Xi}(A)$ be the (algebraic) group of birational transformations of $A$ 
generated by the structure group ${\bf Str}(A)$ and the maps 
$$x\mapsto t_a(x)=x+a~(~a\in~A_{\overline{k}}~),~x\mapsto j(x)=-x^{-1}~(x\in~A_{\overline{k}}~).$$
Then ${\bf \Xi}(A)$ has type $E_{7,1}^{78}$ and any algebraic group over $k$ with this Tits index arises this way (see \cite{T1}). Any element of $f\in{\bf\Xi}(A)$ is of the form 
$$f=w\circ t_a\circ j\circ t_b\circ j\circ t_c,~~w\in\text{\bf Str}(A),~a,b,c\in~A_{\overline{k}}.$$ 
This was proved by Koecher when $k$ is infinite and has characteristic $\neq 2$ and by O. Loos in arbitrary chatacteristics (\cite{L}), in the context of Jordan pairs.\\
\vskip1mm
\noindent
{\bf Remark :} In light of Proposition \ref{deg-6} and Proposition \ref{first-tits}, the anisotropic kernel of a simple, simply connected group $G$ defined over $k$, with Tits index $E^{78}_{8,2}$ or $E^{78}_{7,1}$, is split by a degree $6$ extension of $k$.  

We have
\begin{corollary}\label{an-kernel} Let $G$ be a simple, simply connected algebraic group over $k$ with index $E^{78}_{8,2}$ or $E^{78}_{7,1}$, such that the anisotropic kernel $H$ of $G$ is split by a cubic extension of $k$. Then 
$H\cong\text{\bf Isom}(A)$ for an Albert division algebra $A$ over $k$ arising from the first Tits construction. 
\end{corollary}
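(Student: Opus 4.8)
The plan is to reduce the statement entirely to the structural descriptions of groups with the two given Tits indices together with Proposition \ref{first-tits}, so that no new computation is needed. First I would invoke the classification recalled in the paragraphs on groups with Tits index $E^{78}_{8,2}$ and $E^{78}_{7,1}$: in both cases the semisimple anisotropic kernel $H$ of $G$ is a simple, simply connected, strongly inner group of type $E_6$ over $k$. By the paragraph on strongly inner groups of type $E_6$, every such group is isomorphic to $\text{\bf Isom}(A)$ for some Albert algebra $A$ over $k$, and $H$ is anisotropic if and only if $A$ is a division algebra. Since $H$ is by definition the anisotropic kernel of $G$, it is anisotropic, and hence the corresponding $A$ is an Albert division algebra. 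This yields the isomorphism $H\cong\text{\bf Isom}(A)$ with $A$ a division algebra.

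The second step is to feed the cubic-splitting hypothesis into Proposition \ref{first-tits}. By assumption $H$ is split by a cubic extension $L$ of $k$; transporting this splitting across the isomorphism $H\cong\text{\bf Isom}(A)$, the group $\text{\bf Isom}(A)$ is split by $L$. Proposition \ref{first-tits} asserts precisely that, for an Albert division algebra $A$, the group $\text{\bf Isom}(A)$ (equivalently $\text{\bf Str}(A)$) is split by a cubic extension of $k$ if and only if $A$ arises from the first Tits construction. Applying the equivalence in the ``split $\Rightarrow$ first construction'' direction gives that $A$ is a first Tits construction, which completes the argument.

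The only point requiring care --- rather than a genuine obstacle --- is verifying that the $A$ produced in the first step is exactly the algebra to which Proposition \ref{first-tits} applies, i.e. that it is honestly a division algebra and not merely reduced. This is guaranteed by the anisotropy of the kernel $H$: the equivalence ``$H$ anisotropic $\iff$ $A$ a division algebra'' is what licenses the use of Proposition \ref{first-tits}, whose hypothesis is that $A$ be an Albert division algebra. Once this identification is in place the corollary is immediate, and no further constructions or estimates are needed.
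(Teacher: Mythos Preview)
Your proposal is correct and follows essentially the same approach as the paper, which simply says the corollary is immediate from Proposition \ref{first-tits} and the preceding discussion of strongly inner $E_6$ groups and the anisotropic kernels of the two Tits indices in question. You have merely unpacked that one-line proof into its constituent steps.
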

\begin{proof} This is immediate from Proposition \ref{first-tits} and the above discussion. 
\end{proof} 
\vskip1mm
\noindent
{\bf $R$-equivalence in algebraic groups :}
Let $X$ be an irreducible variety over a field $k$ with $X(k)$ nonempty. Following Manin (\cite{M}) we define points $x,y\in X(k)$ to be $R$-equivalent 
if there exists a sequence $x_0=x, x_1,\cdots, x_n=y$ of points in $X(k)$ and rational mape $f_i:\mathbb{A}_k^1\rightarrow X,~1\leq i\leq n$, defined over $k$ and regular at $0$ and $1$, such that $f_i(0)=x_{i-1},~f_i(1)=x_i$. 

When $G$ is a connected algebraic group defined over $k$, the set of elements of $G(k)$ which are $R$-equivalent to $1\in G(k)$ is a normal subgroup $RG(k)$ of $G(k)$. The set $G(k)/R$ of $R$-equivalence classes in $G(k)$ is in natural 
bijection with the quotient $G(k)/RG(k)$ and therefore carries a natural group structure, and we identify $G(k)/R$ with the group $G(k)/RG(k)$. This group measures the obstruction to rational parameterizing of points of $G(k)$. 

We say $G$ is {\bf $R$-trivial} if $G(L)/R=\{1\}$ for all field extensions $L$ of $k$. Recall that a variety $X$ defined over $k$ is said to be $k$-{\bf rational} if $X$ is birationally isomorphic over $k$ to an affine space. It is well known (see \cite{Vos}, Chap. 6, Prop. 2) that if $G$ is $k$-rational then $G$ is $R$-trivial.  

Let $G$ be a connected reductive group defined over $k$ and assume $G$ is $k$-{\bf isotropic}, i.e., admits a $k$-embedding $\mathbb{G}_m\hookrightarrow G$ such that the image is not contained in $Z(G)$, the center of $G$. Let $G(k)^{\dagger}$ denote the subgroup of $G(k)$ generated by the $k$-rational points of the unipotent radicals of the $k$-parabolic subgroups of $G$. Then $G(k)^{\dagger}$ is normal in $G(k)$ and the quotient $W(k,G)=G(k)/G(k)^{\dagger}$ is the {\bf Whitehead group} of $G$. 

By a theorem of Gille (\cite {G}, Thm. 7.2) if $G$ is a semisimple, simply connected and absolutely almost simple group, defined and isotropic over $k$, then $W(k,G)\cong G(k)/R$. The Kneser-Tits problem asks if for such a group $G$, its Whitehead group is trivial, or more generally, to compute the Whitehead group.
In this paper, we prove 
\begin{theorem}Let $G$ be a simple, simply connected algebraic group over $k$ of type $E_{7,1}^{78}$ or $E_{8,2}^{78}$ or strongly inner type $E_6$, whose anisotropic kernel is split by a cubic extension of $k$. Then $G$ is $R$-trivial.
\end{theorem}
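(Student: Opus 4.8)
The plan is to deduce all three cases from the main theorem (Theorem~\ref{Main}), which asserts that $\mathbf{Str}(A)$ is $R$-trivial whenever $A$ is a first Tits construction. First I would observe that the hypothesis is precisely what licenses that theorem: if the anisotropic kernel $H\cong\mathbf{Isom}(A)$ of $G$ is split by a cubic extension of $k$, then by Proposition~\ref{first-tits} (and Corollary~\ref{an-kernel}) the Albert algebra $A$ is necessarily a first Tits construction, so $\mathbf{Str}(A)$ is $R$-trivial over every extension of $k$. Since $R$-triviality is stable under base change, it suffices to argue with $G(F)/R$ for a fixed extension $F$ and let the same reasoning run over all $F$. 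The whole proof is then an assembly: in each case I realize $G$ so that its rational points are built from $\mathbf{Str}(A)$ together with unipotent pieces that are manifestly $R$-trivial.

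\textbf{The case $E_{7,1}^{78}$.} Here $G\cong\mathbf{\Xi}(A)$ in the Koecher model, and the excerpt records that every $f\in\mathbf{\Xi}(A)(F)$ has the normal form $f=w\circ t_a\circ j\circ t_b\circ j\circ t_c$ with $w\in\mathbf{Str}(A)(F)$. The translations $t_a,t_c$ lie in the unipotent radical $U^{+}=\{t_x\}\cong A$ of a rank-one $k$-parabolic, while the middle factor $j\circ t_b\circ j$ lies in the opposite radical $U^{-}=jU^{+}j\cong A$; each is an affine space, hence a rational, $R$-trivial subgroup of $G$. Thus in $G(F)/R$ one has $t_a\sim 1$, $j\circ t_b\circ j\sim 1$ and $t_c\sim 1$, so $f\sim w$. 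As $\mathbf{Str}(A)$ is the (reductive, type $E_6$) Levi of that parabolic, it is a closed subgroup of $\mathbf{\Xi}(A)$, and any $R$-equivalence of $w$ to $1$ inside $\mathbf{Str}(A)(F)$ is a fortiori one inside $G(F)$. Theorem~\ref{Main} gives $w\sim 1$, whence $f\sim 1$ and $G(F)/R=\{1\}$.

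\textbf{The cases $E_{8,2}^{78}$ and strongly inner $E_6$.} For $E_{8,2}^{78}$ the relative type is $G_2$ (the building being a Moufang hexagon), and I would fix a maximal $k$-parabolic $P=LU$ whose derived Levi $L'$ has Tits index $E_{7,1}^{78}$ with the same anisotropic kernel $\mathbf{Isom}(A)$ (this is the parabolic obtained by returning one of the two distinguished nodes to the Levi). Since $U$ and $U^{-}$ are split unipotent, hence $R$-trivial, the big cell $U^{-}LU$ forces the natural map $L(F)/R\to G(F)/R$ to be surjective. Writing $L=L'\cdot Z(L)$ with $Z(L)$ a split central torus (so $R$-trivial) and $L'$ already $R$-trivial by the $E_{7,1}^{78}$ case, $L$ is $R$-trivial and therefore so is $G$. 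For $G$ of strongly inner type $E_6$ we have $G\cong\mathbf{Isom}(A)$, the derived group of $\mathbf{Str}(A)$; when $A$ is a division algebra this $G$ is anisotropic and carries no proper $k$-parabolic, so instead I would exploit the exact sequence
\[
1\longrightarrow\mathbf{Isom}(A)\longrightarrow\mathbf{Str}(A)\xrightarrow{\ \nu\ }\mathbb{G}_m\longrightarrow 1
\]
given by the multiplier $\nu$, together with the section-up-to-isogeny furnished by the scalar homotheties $\lambda\mapsto\lambda\cdot\mathrm{id}$ (for which $\nu=\lambda^{3}$).

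\textbf{Main obstacle.} The genuinely delicate step is this last descent, from $R$-triviality of $\mathbf{Str}(A)$ to that of its derived subgroup $\mathbf{Isom}(A)$. Triviality of the ambient $\mathbf{Str}(A)(F)/R$ does not by itself trivialize a subgroup, since the rational curves witnessing $g\sim 1$ for $g\in\mathbf{Isom}(A)(F)$ may leave $\mathbf{Isom}(A)$; concretely, the obstruction is the connecting map $F^{*}=\mathbb{G}_m(F)\to\mathbf{Isom}(A)(F)/R$ attached to the multiplier sequence, and the cube in $\nu(\lambda\cdot\mathrm{id})=\lambda^{3}$ — equivalently the central isogeny $\mu_3$ identifying $\mathbf{Str}(A)$ with $(\mathbf{Isom}(A)\times\mathbb{G}_m)/\mu_3$ — is exactly its source. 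I expect the resolution to parallel the proof of Theorem~\ref{Main}: rather than descending from $\mathbf{Str}(A)$, one shows directly that $\mathbf{Isom}(A)$ is generated by $R$-trivial subgroups internal to it, namely products $U_aU_b\cdots$ of $U$-operators whose multipliers cancel to $1$, so that each generator is joined to the identity by a rational curve lying wholly within $\mathbf{Isom}(A)$. The same multiplier-cancellation device should also dispose of the residual central torus $Z(L)$ in the Levi reduction for $E_{8,2}^{78}$.
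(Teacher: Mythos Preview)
Your overall architecture --- reduce $G(F)/R$ to the Levi via the big cell and then invoke $R$-triviality of $\mathbf{Str}(A)$ --- is exactly the paper's. The differences are in how the two residual difficulties you identify are discharged.

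For the isotropic cases $E^{78}_{7,1}$ and $E^{78}_{8,2}$, the paper does not chain through an intermediate $E_7$ Levi. It uses the birational isomorphism $G\sim U\times Z_G(\mathbf{S})\times U$ and then Gille's specialisation result $Z_G(\mathbf{S})(k)/R\cong (Z_G(\mathbf{S})/\mathbf{S})(k)/R$ to pass in one step to the quotient by the \emph{full} maximal split torus $\mathbf{S}$. A short computation shows $Z_G(\mathbf{S})/\mathbf{S}\cong \mathbf{Str}(A)/\mathbf{Z}$ with $\mathbf{Z}=Z(\mathbf{Str}(A))\cong\mathbb{G}_m$, and Hilbert~90 gives $(\mathbf{Str}(A)/\mathbf{Z})(F)=\mathbf{Str}(A)(F)/\mathbf{Z}(F)$, whence Theorem~\ref{Main} finishes. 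This manoeuvre is precisely what dissolves the central-isogeny worry you raise at the end: your route through $L=L'\cdot Z(L)^\circ$ requires $L'(F)\cdot Z(L)^\circ(F)\to L(F)$ to be surjective, which is obstructed by $H^1(F,L'\cap Z(L)^\circ)$ with $L'\cap Z(L)^\circ\subset\mu_2$; quotienting by $\mathbf{S}$ and invoking Hilbert~90 for $\mathbb{G}_m$ avoids ever confronting a $\mu_n$.

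For $\mathbf{Isom}(A)$ your diagnosis of the obstacle is correct, but the fix is not by exhibiting internal $R$-trivial generators. The paper (Theorem~\ref{Isom}) \emph{does} descend from $\mathbf{Str}(A)$, by correcting a given $\mathbf{Str}(A)$-curve so that it lands in $\mathbf{Isom}(A)$: if $t\mapsto\psi_t$ joins $\psi\in\mathrm{Isom}(A)$ to $1$ inside $\mathbf{Str}(A)$, set $a_t:=\psi_t(1)$, $\lambda_t:=N(a_t)$, and
\[
\chi_t:=R_{\lambda_t^{-1}}\,U_{a_0}^{-1}\,U_{a_t}\,\psi_t.
\]
One checks $N(\chi_t(1))=\lambda_t^{-3}N(a_0)^2N(a_t)^3=1$ (using $N(a_0)=N(\psi(1))=1$), so $\chi_t\in\mathbf{Isom}(A)$ for all $t$, while $\chi_0=\psi$ and $\chi_1=1$. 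This pointwise-in-$t$ multiplier cancellation via $U$-operators and scalars is the missing idea; your proposed alternative (generation of $\mathrm{Isom}(A)$ by norm-one words in $U$-operators) would need Tits--Weiss as input and still leaves the passage from ``generated by $R$-trivial pieces'' to ``$R$-trivial'' inside $\mathbf{Isom}(A)$ unexplained.
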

\begin{corollary} Let $G$ be a simple, simply connected algebraic group, defined and isotropic over a field $k$, with Tits index $E^{78}_{8,2}$ or $E^{78}_{7,1}$, and anisotropic kernel split by a cubic extension of $k$. then the Kneser-Tits conjecture holds for $G$. Moreover, $G(L)$ is projectively simple, for any field extension $L$ or $k$. 
\end{corollary}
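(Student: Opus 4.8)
The plan is to deduce both assertions formally from the Theorem (which supplies the $R$-triviality) together with two standard structural results: Gille's theorem identifying the Whitehead group with the group of $R$-equivalence classes, and Tits' theorem on the abstract simplicity of isotropic, absolutely almost simple groups. Observe first that the hypotheses are exactly those of the Theorem: $G$ is of type $E_8$ or $E_7$, hence absolutely almost simple; its Tits index $E^{78}_{8,2}$ or $E^{78}_{7,1}$ is non-trivial, so $G$ is $k$-isotropic; and its anisotropic kernel is split by a cubic extension of $k$. Thus the Theorem applies and $G$ is $R$-trivial.

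For the Kneser-Tits assertion, I would first invoke the Theorem to obtain $G(k)/R=\{1\}$. Since $G$ is simple, simply connected, absolutely almost simple and isotropic over $k$, Gille's theorem (\cite{G}, Thm. 7.2) furnishes a natural isomorphism $W(k,G)\cong G(k)/R$. Combining these gives $W(k,G)=G(k)/G(k)^{\dagger}=\{1\}$, which is precisely the statement that the Kneser-Tits conjecture holds for $G$.

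For projective simplicity, fix a field extension $L$ of $k$; since $k$ is infinite, so is $L$, and $G$ remains isotropic and absolutely almost simple over $L$. The Theorem yields $R$-triviality over every extension, so $G(L)/R=\{1\}$, and applying Gille's theorem over $L$ gives $G(L)=G(L)^{\dagger}$. By Tits' theorem on abstract simplicity of isotropic absolutely almost simple groups over infinite fields, the quotient $G(L)^{\dagger}/Z\bigl(G(L)^{\dagger}\bigr)$ is simple as an abstract group; here the center of the abstract group $G(L)^{\dagger}=G(L)$ coincides with $Z(G)(L)$, since $G$ is simply connected. Hence $G(L)/Z(G)(L)$ is simple, i.e., $G(L)$ is projectively simple. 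For index $E^{78}_{8,2}$ the group is of type $E_8$, whose center is trivial, so in fact $G(L)$ itself is simple as an abstract group, recovering the remark of the introduction.

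All the genuine content sits in the Theorem; the corollary is a formal consequence of it, so the only point needing care is that the hypotheses of Gille's and Tits' theorems are met, namely that $G$ is absolutely almost simple, isotropic, and defined over an infinite field, each of which is immediate here. In particular the only possible obstacle, the small-field exceptions in Tits' simplicity theorem, does not arise because $L\supseteq k$ is infinite; there is thus no substantive difficulty beyond this verification.
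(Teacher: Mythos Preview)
Your proof is correct and follows exactly the approach the paper intends: the corollary is stated in \S 2 without a separate proof because it is meant to be a formal consequence of the $R$-triviality Theorem via Gille's theorem (explicitly recalled in the preliminaries) and Tits' abstract simplicity theorem (alluded to in the introduction, where the $E_8$ case with trivial center is singled out). Your verification that the hypotheses of both theorems are met over every extension $L$ is precisely what is needed, and nothing more is required.
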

\begin{corollary} Let $G$ be a simple, simply connected algebraic group, defined and isotropic over a field $k$, with Tits index $E^{78}_{8,2}$ or $E^{78}_{7,1}$. Then there exists a quadratic field extension $K$ of $k$ such that $G$ is $R$-trivial over $K$, i.e., for any extension $F$ of $K$, $G(F)/R=\{1\}$.
\end{corollary}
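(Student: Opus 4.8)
The plan is to reduce the assertion to the Theorem stated above by passing to the quadratic extension over which the anisotropic kernel of $G$ becomes a first Tits construction. As recalled above, the anisotropic kernel $H$ of $G$ is isomorphic to {\bf Isom}$(A)$ for an Albert division algebra $A$ over $k$. Every Albert division algebra is either a first Tits construction or a pure second construction, and I would handle these two cases separately.

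If $A$ is a first Tits construction, then by Proposition \ref{first-tits} the anisotropic kernel {\bf Isom}$(A)$ of $G$ is split by a cubic extension of $k$, so the hypotheses of the Theorem are already satisfied over $k$. Hence $G$ is $R$-trivial over $k$, and therefore over every field extension of $k$; taking $K$ to be any quadratic field extension of $k$ then yields the claim in this case.

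The substantive case is when $A$ is a pure second construction. Here I would write $A=J(B,\sigma,u,\mu)$ with $B$ a degree $3$ central simple algebra over its center $K$, a quadratic field extension of $k$, and $\sigma$ an involution of the second kind; this $K$ will be the required extension. Base changing to $K$ gives $A_K=A\otimes_kK\cong J(B,\mu)$, a first Tits construction Albert algebra over $K$. Since $A$ is a division algebra, $\mu$ is not a norm from $B$, which is precisely the condition for $J(B,\mu)$ to be a division algebra; hence $A_K$ is an Albert division algebra over $K$.

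It then remains to check that $G_K$ still carries Tits index $E^{78}_{8,2}$ or $E^{78}_{7,1}$ over $K$, and this verification is the step I expect to be the main obstacle. The anisotropic kernel of $G_K$ is the base change of {\bf Isom}$(A)$, namely {\bf Isom}$(A_K)$, which is anisotropic over $K$ because $A_K$ is a division algebra. Since base change cannot lower the relative rank, while the anisotropic kernel staying anisotropic forbids the rank from rising, $G_K$ retains over $K$ the same index that $G$ had over $k$; the crux here is exactly that $A_K$ remains a division algebra, which keeps the strongly inner $E_6$ kernel anisotropic and the relative rank fixed, and which holds because the norm criterion for division-ness of $A$ and of $A_K$ is the identical condition on $\mu$. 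Finally, {\bf Isom}$(A_K)$ is split by a cubic extension of $K$ by Proposition \ref{first-tits}, as $A_K$ is a first Tits construction over $K$; thus $G_K$ meets the hypotheses of the Theorem over $K$, so $G_K$ is $R$-trivial, i.e. $G(F)/R=\{1\}$ for every field extension $F$ of $K$, as claimed.
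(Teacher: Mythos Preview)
Your proof is correct and follows precisely the approach the paper sets up (though the paper states this corollary without an explicit proof). The reduction via $A\otimes_k K\cong J(B,\mu)$ is exactly the mechanism prepared in Proposition~\ref{deg-6}, and your verification that $A_K$ remains a division algebra---by observing that the division criterion ``$\mu$ is not a reduced norm from $B$'' is literally the same condition for $J(B,\sigma,u,\mu)$ and for $J(B,\mu)$---is the key point ensuring the Tits index of $G$ is unchanged over $K$, so that the Theorem applies to $G_K$.
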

\noindent
We will prove this result in steps. The key result in proving the above theorem is our {\bf Main theorem}: 
\begin{theorem}
Let $A$ be an Albert division algebra over $k$ arising from the first Tits construction. Then {\bf Str}$(A)$ is $R$-trivial. 
\end{theorem}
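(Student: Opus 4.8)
Since the conclusion is uniform in the base field and the property of being a first Tits construction is preserved under base change (if $A=J(D,\lambda)$, then $A_F=J(D_F,\lambda)$ for every field extension $F$ of $k$), it suffices to prove that every $g\in\mathbf{Str}(A)(k)$ lies in $R\mathbf{Str}(A)(k)$, i.e. is $R$-equivalent to the identity. The whole argument rests on two soft functorial facts: a morphism of $k$-varieties carries $R$-equivalent points to $R$-equivalent points, and a morphism of pointed $k$-varieties from an $R$-trivial group sends every rational point into the $R$-trivial class of the target. Thus it is enough to exhibit $\mathbf{Str}(A)(k)$ as generated, modulo $R$-equivalence, by images of subgroups already known to be $R$-trivial.

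\textbf{Two $R$-trivial families.} The basic device is a secant-line argument. Let $\phi\colon\Omega\rightarrow\mathbf{Str}(A)$ be a morphism defined on the open set $\Omega=\{x\in A:N(x)\neq0\}$ of invertible elements with $\phi(1)=\mathrm{id}$. For invertible $a$ the segment $\ell(t)=(1-t)\cdot 1+t\,a$ has $t\mapsto N(\ell(t))$ a polynomial of degree at most $3$, so $\ell(t)$ is invertible off at most three values of $t$, and $t=0,1$ are not among them since $N(1)=1$ and $N(a)\neq0$. Hence $t\mapsto\phi(\ell(t))$ is a rational map $\mathbb{A}^1_k\rightarrow\mathbf{Str}(A)$, regular at $0$ and $1$, joining $\mathrm{id}$ to $\phi(a)$, so $\phi(a)\in R\mathbf{Str}(A)(k)$. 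Applying this to $a\mapsto U_a$ (with $U_1=\mathrm{id}$) and to the homothety map $\lambda\mapsto\lambda\,\mathrm{id}$ shows that all $U$-operators and all scalar homotheties lie in $R\mathbf{Str}(A)(k)$.

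\textbf{Reduction through a $9$-dimensional subalgebra.} I would next exploit the first-construction structure $A=J(D,\lambda)$, which contains $D_{+}$ as a $9$-dimensional subalgebra. The group $\mathbf{Str}(D_{+})$ of norm similarities of the reduced norm $N_D$ is built from the two-sided multiplications $x\mapsto a\,x\,b$ with $a,b\in D^{\times}$ together with the adjoint; its identity component is a quotient of $\mathrm{GL}_1(D)\times\mathrm{GL}_1(D)$ and is therefore $k$-rational, hence $R$-trivial. By the extension theorem for norm similarities of $9$-dimensional subalgebras (Theorem~\ref{ext-norm-sim}), these similarities extend to $\mathbf{Str}(A)$ through a morphism of varieties $\mathbf{Str}(D_{+})\rightarrow\mathbf{Str}(A)$ carrying identity to identity; since $\mathbf{Str}(D_{+})$ is $R$-trivial, its image lies in $R\mathbf{Str}(A)(k)$. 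The crux is then to show, using the extension and transitivity results of \S3 and \S4, that an arbitrary $g\in\mathbf{Str}(A)(k)$ can be multiplied by such extended similarities and by $U$-operators and homotheties so as to land in the stabiliser $\mathbf{Aut}(A)(k)=\mathrm{Stab}_{\mathbf{Str}(A)}(1)$; concretely, one normalises the invertible element $g(1)$ to $1$. One then invokes the $R$-triviality of the relevant subgroups of $\mathbf{Aut}(A)$ established in \S3 and \S5. As $R\mathbf{Str}(A)(k)$ is a normal subgroup of $\mathbf{Str}(A)(k)$, the resulting chain of $R$-trivial factors forces $g\in R\mathbf{Str}(A)(k)$, completing the argument.

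\textbf{The main obstacle.} The secant steps are routine; the real work is the structural reduction. The difficulty is twofold. First, one must extend norm similarities of a $9$-dimensional subalgebra of $A$ to $\mathbf{Str}(A)$ over a field of arbitrary characteristic (Theorem~\ref{ext-norm-sim}), where characteristic-zero or Lie-algebra techniques are unavailable. Second, and more seriously, one needs the accompanying transitivity statement that lets one move $g(1)$ to $1$ by $R$-trivial elements, since this is what genuinely fills out $\mathbf{Str}(A)(k)$ modulo $R$-equivalence. Throughout, the interplay between the two descriptions of $A$—as a first construction $J(D,\lambda)$ and, when applicable, simultaneously as a second construction—must be controlled, and it is precisely here that the first-construction hypothesis (equivalently, splitting of $\mathbf{Str}(A)$ over a cubic extension, Proposition~\ref{first-tits}) is decisive.
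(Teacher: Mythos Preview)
Your overall architecture matches the paper's proof of Theorem~\ref{Main}: reduce an arbitrary $\psi\in Str(A)$ to $Aut(A)$ by normalising $a:=\psi(1)$ to $1$ via $R$-trivial elements, then invoke Theorem~\ref{aut-R-triv} ($Aut(A)\subset R(\text{\bf Str}(A)(k))$). Your secant-line argument for $U$-operators and homotheties is exactly Theorem~\ref{Instr}.

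There is, however, a genuine gap in your normalisation step. You propose to work with the \emph{fixed} subalgebra $D_+\subset A=J(D,\lambda)$ and extend similarities of $D_+$. But $a=\psi(1)$ has no reason to lie in $D_+$, so elements of $Str(A,D_+)$ cannot in general move $a$ to $1$. The paper does not use $D_+$ here at all. Instead, given $a\notin k$, it sets $L=k(a)$ and invokes the first-construction hypothesis via \cite{PR2}, Cor.~4.5, to find $\lambda'\in k^*$ with $S:=J(L,\lambda')\subset A$; this $9$-dimensional subalgebra is chosen precisely so that $a\in S$. One then writes down explicitly $\chi=R_{N(a)}U_{(0,0,1)}U_{(0,N(a)^{-1}a,0)}\in Str(S)$ with $\chi(a)=1$, extends $\chi$ to $\widetilde\chi\in Str(A,S)$, and applies Theorem~\ref{Str-R-triv} to conclude $\widetilde\chi\in R(\text{\bf Str}(A)(k))$. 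The dependence of $S$ on $\psi$ is essential, and it is exactly here that the first-construction hypothesis enters the main theorem.

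A secondary point: your claim that Theorem~\ref{ext-norm-sim} yields a \emph{morphism of varieties} $\text{\bf Str}(D_+)\rightarrow\text{\bf Str}(A)$ is not justified, since the extension formula involves a non-canonical choice of $q$ (or $c$ in the first-construction corollary) subject only to a norm condition. The paper sidesteps this entirely: it never needs such a morphism, because Theorem~\ref{Str-R-triv} proves directly that the stabiliser $\text{\bf Str}(A,S)$ is $R$-trivial for any $9$-dimensional $S$, by an explicit rational curve built from the Yanchevskii-type factorisation (Proposition~\ref{yanchevskii}). That is the correct replacement for your ``image of an $R$-trivial group'' argument.
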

\begin{corollary}{\bf Tits-Weiss conjecture :} Let $A$ be an Albert division algebra arising from the first Tits construction. Then $Str(A)=C.Instr(A)$, where $C$ is the subgroup of $Str(A)$ consisting of scalar homotheties $R_a:A\rightarrow A,~a\in k^*$.
\end{corollary}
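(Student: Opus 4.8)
The plan is to obtain the equality $Str(A)=C\cdot Instr(A)$ by translating it, via the Tits--Weiss dictionary, into the Kneser--Tits conjecture for a group of type $E^{78}_{8,2}$, and then to settle the latter using $R$-triviality. Recall that the abstract subgroup $C\cdot Instr(A)$ of $Str(A)$ generated by the scalar homotheties $R_a$ and the $U$-operators $U_a$ is normal, since $C$ is central and $Instr(A)$ is normal in $Str(A)$. As a warm-up I would record the easy inclusion $C\cdot Instr(A)\subseteq RStr(A)(k)$, where $RStr(A)(k)$ denotes the subgroup of elements $R$-equivalent to $1$: each homothety lies on the rational curve $t\mapsto R_{1+t(a-1)}$ and each $U$-operator on the rational curve $s\mapsto U_{1+s(a-1)}$, both regular at $0$ and $1$ (for the second, $N(1+s(a-1))$ is a nonzero polynomial in $s$ taking the values $1$ and $N(a)\neq 0$ at $s=0,1$, so invertibility holds there since $A$ is a division algebra). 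Thus $R_a,U_a\in RStr(A)(k)$, and these are exactly the generators singled out by Tits and Weiss.

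First I would produce an isotropic group carrying $\mathbf{Isom}(A)$ as anisotropic kernel. Since $A$ is a first Tits construction, Proposition \ref{first-tits} shows that $\mathbf{Isom}(A)$ is split by a cubic extension of $k$; hence $\mathbf{Isom}(A)$ is the anisotropic kernel of a simple, simply connected group $G$ of type $E^{78}_{8,2}$ (the automorphism group of the Moufang hexagon attached to the hexagonal system $27/k$ of $A$) satisfying the hypotheses of our Theorem on $R$-triviality of such groups. That Theorem, proved using the Main Theorem, gives that $G$ is $R$-trivial; in particular $G(k)/R=\{1\}$.

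Next I would invoke Gille's theorem (\cite{G}, Thm.\ 7.2): because $G$ is semisimple, simply connected, absolutely almost simple (of absolute type $E_8$) and isotropic over $k$, one has $W(k,G)\cong G(k)/R$. Therefore $W(k,G)=\{1\}$, i.e.\ the Kneser--Tits conjecture holds for $G$. Finally, by the equivalence established by Tits and Weiss (\cite{TW}, 37.41, 37.42 and p.\ 418), the triviality of the Whitehead group $W(k,G)$ is equivalent to $Str(A)$ being generated by the scalar homotheties and the $U$-operators, that is, to $Str(A)=C\cdot Instr(A)$. This is the assertion to be proved.

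The main obstacle is precisely the passage effected by the Tits--Weiss dictionary, namely the identification of the subgroup $G(k)^{\dagger}$ (generated by the $k$-points of the unipotent radicals of the $k$-parabolics of $G$) with the abstract subgroup $C\cdot Instr(A)$. There is no elementary reason why an arbitrary $R$-trivial norm similarity should be a finite word in homotheties and $U$-operators: $Instr(A)$ is taken as an abstract group and may a priori be strictly smaller than the group of $k$-points of the algebraic inner structure group, so the reverse inclusion $RStr(A)(k)\subseteq C\cdot Instr(A)$ is not visible directly. It is exactly this gap that the Tits--Weiss correspondence encodes as $W(k,G)$, and that the $R$-triviality furnished by the Main Theorem annihilates.
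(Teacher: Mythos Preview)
Your proposal is correct and follows essentially the same route as the paper: construct the simply connected group $G$ of type $E^{78}_{8,2}$ attached to $A$, invoke Theorem~\ref{Weiss} for $R$-triviality of $G$, combine with Gille's identification $W(k,G)\cong G(k)/R$, and conclude via the Tits--Weiss isomorphism $G(k)/G(k)^{\dagger}\cong Str(A)/(C\cdot Instr(A))$. The paper's own proof is the one-line version of exactly this chain; your warm-up inclusion $C\cdot Instr(A)\subseteq R\mathbf{Str}(A)(k)$ is also in the paper (Theorem~\ref{Instr}) but is not needed for the corollary itself.
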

\section{\bf Automorphisms and $R$-triviality}
Let $A$ be an Albert algebra over $k$ and $S$ a subalgebra of $A$. We will denote by {\bf Aut}$(A/S)$ the (closed) subgroup of {\bf Aut}$(A)$ consisting of automorphisms of $A$ which fix $S$ pointwise and {\bf Aut}$(A,S)$ will denote the closed subgroup of automorphisms of $A$ stabilizing $S$. The group of $k$-rational points of these groups will be denoted by ordinary fonts, for example $Aut(A)=$ {\bf Aut}$(A)(k)$ and so on. When $A$ is a division algebra, any proper subalgebra $S$ of $A$ is either $k$ or a cubic subfield or a $9$-dimensional (degree $3$) Jordan subalgebra, which is of the form $D_+$ for a degree $3$ central division algebra $D$ over $k$ or of the form $(B,\sigma)_+$ for a central division algebra $B$ of degree $3$ over a quadratic extension $K$ of $k$, with an involution $\sigma$ of the second kind over $K/k$ (cf. Thm. 1.1, \cite{PR6}). We record below a description of some subgroups of {\bf Aut}$(A)$ which we shall use in the sequel, we refer to (\cite{KMRT}, 39.B) and (\cite{FP}) for details.
\begin{proposition}\label{rational} Let $A$ be an Albert algebra over $k$.
\begin{enumerate} 
\item Suppose $S=D_+\subset A$ for $D$ a degree $3$ central simple $k$-algebra. Write $A=J(D,\mu)$ for some $\mu\in k^*$. Then {\bf Aut}$(A/S)\cong$ {\bf SL}$(1,D)$, the algebraic group of norm $1$ elements in $D$. 
\item Let $S=(B,\sigma)_+\subset A$ for $B$ a degree $3$ central simple algebra of degree $3$ over a quadratic extension $K$ of $k$ with an involution $\sigma$ of the second kind over $K/k$. Write $A=J(B,\sigma,u,\mu)$ for suitable parameters. Then 
{\bf Aut}$(A/S) \cong $ {\bf SU}$(B,\sigma_u)$, where $\sigma_u:=Int(u)\circ\sigma$. 
\end{enumerate}
In particular, the subgroups described in (1) and (2) are simply connected, simple of type $A_2$, defined over $k$ and hence are rational, therefore are $R$-trivial. 
\end{proposition}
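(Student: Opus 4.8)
The plan is to establish the two displayed isomorphisms first and then read off the ``in particular'' clause from the structure theory of type $A_2$ groups together with the rationality criterion recorded in the excerpt. For (1) I would exhibit the isomorphism explicitly. Writing $A=J(D,\mu)=D\oplus D\oplus D$ with the first Tits process adjoint $(x,y,z)^{\#}=(x^{\#}-yz,\,\mu^{-1}z^{\#}-xy,\,\mu y^{\#}-zx)$ and $S=D_+$ the first summand, for each $d\in D$ with $N_D(d)=1$ I would set $\phi_d(x,y,z)=(x,\,yd,\,d^{-1}z)$. A direct check shows $\phi_d$ fixes the base point $(1,0,0)$ and commutes with $\#$ (using $(uv)^{\#}=v^{\#}u^{\#}$ and $d^{\#}=d^{-1}$), hence preserves $N$ (since $x^{\#\#}=N(x)x$ forces $N(\phi_d(x))=N(x)$) and is a Jordan automorphism fixing $S=D_+$ pointwise. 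This produces a morphism $\mathbf{SL}(1,D)\to\mathbf{Aut}(A/S)$ which is visibly injective.

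The substantive step is surjectivity. Given $\phi\in\mathbf{Aut}(A/S)$, I would use that $\phi$ fixes $D_+$ pointwise and preserves the bilinearised adjoint $\times$. Pairing an arbitrary element of the complementary space against $D_+$ via $(x,0,0)\times(0,y,z)=(0,-xy,-zx)$ forces, after specialising one of $y,z$ to zero and invoking noncommutativity of $D$, that $\phi(x,y,z)=(x,\,yp,\,rz)$ for fixed $p,r\in D$. Imposing $\phi(w^{\#})=\phi(w)^{\#}$ on all three components then yields $pr=1$ and $N_D(p)=1$ with $r=p^{-1}$, so $\phi=\phi_p$, which gives the isomorphism of (1). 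For (2) I would reduce to (1) over the quadratic centre $K$ of $B$: since $A\otimes_kK\cong J(B,\mu)$, part (1) identifies $\mathbf{Aut}(A/S)_K$ with $\mathbf{SL}(1,B)$, and the $k$-structure, i.e.\ compatibility with $\sigma$ and $u$, cuts this down by Galois descent to the $\sigma_u$-unitary, reduced-norm-one elements, that is, to $\mathbf{SU}(B,\sigma_u)$ with $\sigma_u=\mathrm{Int}(u)\circ\sigma$. (Alternatively both isomorphisms may simply be quoted from \cite{KMRT}, 39.B and \cite{FP}.)

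It remains to justify the final clause. The group $\mathbf{SL}(1,D)$ with $D$ central simple of degree $3$ is an inner form of $\mathbf{SL}_3$, hence absolutely simple, simply connected of type $A_2$, defined over $k$; the group $\mathbf{SU}(B,\sigma_u)$ with $\sigma_u$ of the second kind over $K/k$ is an outer form, i.e.\ of type ${}^{2}A_{2}$, again absolutely simple, simply connected and defined over $k$. For both families the underlying variety is known to be $k$-rational. Finally, a $k$-rational group is $R$-trivial (\cite{Vos}, Chap.~6, Prop.~2), so $\mathbf{Aut}(A/S)$ is $R$-trivial in either case.

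The main obstacle I anticipate is twofold. Proving surjectivity in (1) from scratch is the only genuinely computational point, though it is elementary once the $\times$-pairing with $D_+$ is exploited; and in (2) the descent through $K/k$, together with the precise bookkeeping of the twist by $u$, requires care to land exactly on $\sigma_u$ rather than $\sigma$. If one instead quotes the isomorphisms, the crux becomes the rationality input, namely that a simply connected group of type $A_2$ (the inner forms $\mathbf{SL}(1,D)$ and the outer forms $\mathbf{SU}(B,\sigma_u)$ in degree $3$) is a rational variety; this is the deepest ingredient, and it is the step on which the $R$-triviality ultimately rests.
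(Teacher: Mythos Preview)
Your proposal is correct. The paper itself does not prove the isomorphisms in (1) and (2) at all: it simply refers to \cite{KMRT}, 39.B and \cite{FP} (which you also mention as an alternative), so your explicit construction of $\phi_d$ and the surjectivity argument via the $\times$-pairing, together with the descent sketch for (2), go well beyond what the paper supplies. The one substantive point where the paper is sharper is the ``in particular'' clause: where you assert that the underlying varieties of $\mathbf{SL}(1,D)$ and $\mathbf{SU}(B,\sigma_u)$ are ``known to be $k$-rational'' and flag this as the deepest ingredient, the paper invokes the general theorem of Chernousov--Platonov \cite{CP} that every connected reductive group of absolute rank at most $2$ is rational. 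That single reference handles both the inner and outer $A_2$ cases uniformly and is exactly the missing citation you were looking for; with it, your argument is complete and in fact more self-contained than the paper's.
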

\noindent
The last assertion is due to the fact that connected reductive groups of absolute rank at most $2$ are rational (see \cite{CP}). For an algebraic group $G$ defined over $k$, we denote the (normal) subgroup of elements in $G(k)$ that are $R$-equivalent to $1\in G(k)$ by $R(G(k))$.\\
\vskip1mm
\noindent
{\bf Extending automorphisms :} Let $A$ be an Albert (division) algebra and $S\subset A$ a subalgebra. Then, under certain conditions, we can extend automorphisms of $S$ to automorphisms of $A$ (e.g. when $S$ is nine dimensional, see \cite{P-S-T1}, Thm. 3.1, \cite{P}, Thm. 5.2). We need certain explicit extensions for our results, we proceed to describe these (cf. \cite{KMRT}, 39.B). 
We will need the following important result in the sequel frequently, so we record it below (\cite{Me} or \cite{KMRT}, 40.13 and Lemma 5.4 of \cite{P} for arbitrary characteristics):
\begin{lemma}\label{unitary} Let $(B,\sigma)$ be a degree $3$ central simple algebra with center $K$, a quadratic \'{e}tale algebra and $\sigma$ an involution of the second kind. Let $w\in B^*$ be such that $\lambda:=N_B(w)$ satisfies $\lambda\overline{\lambda}=1$. Then there exists $q\in U(B,\sigma)$ such that $\lambda=N_B(q)$. 
\end{lemma}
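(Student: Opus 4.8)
The plan is to read the statement as a surjectivity assertion for the reduced norm of the unitary group, and to reduce its single obstruction, by passing to the center $K$, to the hypothesis $\lambda \in N_B(B^*)$. First I would record the target of the reduced norm on unitary elements: if $\sigma(q)q = 1$ then $N_B(q)\,N_B(\sigma(q)) = 1$, and since $\sigma$ is of the second kind one has $N_B(\sigma(q)) = \overline{N_B(q)}$, so $N_B(q)\overline{N_B(q)} = 1$. Thus $N_B$ carries $U(B,\sigma)(k)$ into the norm-one torus $T(k) := \{z \in K^* : z\overline{z} = 1\}$, and the hypothesis $\lambda\overline{\lambda}=1$ says precisely $\lambda \in T(k)$. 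The goal is then to show that the class of $\lambda$ is trivial in the abelian cokernel group
$$ C_k := T(k)\big/ N_B\bigl(U(B,\sigma)(k)\bigr), $$
which is genuinely a quotient of abelian groups (so I avoid the pointed-set subtleties of $H^1$ of the nonabelian group $SU(B,\sigma)$). Conceptually $C_k$ measures the failure of surjectivity in the exact sequence $1 \to SU(B,\sigma) \to U(B,\sigma) \xrightarrow{N_B} T \to 1$.

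The first clean observation is that $C_k$ is annihilated by $3$. Indeed, for any $z \in T(k)$ the central scalar $z \in K^* \subseteq B^*$ is itself unitary (since $\sigma(z)z = \overline{z}z = 1$) and has reduced norm $N_B(z) = z^3$; hence $z^3 \in N_B(U(B,\sigma)(k))$, so $C_k$ is $3$-torsion. This is where the degree $3$ enters decisively.

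Next I would descend to the center. Base changing to $K$ splits the outer form: $U(B,\sigma)\times_k K \cong GL_1(B)$ and $SU(B,\sigma)\times_k K \cong SL_1(B)$ as $K$-groups, the map $N_B$ becoming the ordinary reduced norm of $B/K$, and $T \times_k K \cong \mathbb{G}_m$. Taking $K$-points of $1 \to SL_1(B) \to GL_1(B) \xrightarrow{N_B} \mathbb{G}_m \to 1$ identifies the corresponding cokernel with $C_K := K^*/N_B(B^*)$, and the inclusions $T(k)\hookrightarrow K^*$, $N_B(U(k))\subseteq N_B(B^*)$ induce a restriction homomorphism $\rho : C_k \to C_K$ sending $[\lambda] \mapsto [\lambda]$. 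By the very hypothesis $\lambda = N_B(w) \in N_B(B^*)$, we get $\rho([\lambda]) = 0$ in $C_K$. It therefore suffices to show that $\rho$ is injective, and here the $3$-torsion of $C_k$ pays off: along the separable quadratic extension $K/k$ there is a corestriction (transfer) $\mathrm{cor} : C_K \to C_k$ with $\mathrm{cor}\circ\rho = [K:k] = 2$; since $2$ is invertible on the $3$-torsion group $C_k$, the map $\rho$ is injective, forcing $[\lambda] = 0$ in $C_k$. Unwinding, $\lambda \in N_B(U(B,\sigma)(k))$, i.e. there is $q \in U(B,\sigma)$ with $N_B(q) = \lambda$, as required.

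The step I expect to be the main obstacle is the construction of the transfer $\mathrm{cor}: C_K \to C_k$ with $\mathrm{cor}\circ\rho = 2$. The naive torus-level corestriction $K^* \to T(k)$, $x \mapsto x/\overline{x}$, does satisfy $\bigl(x/\overline{x}\bigr)\circ\mathrm{res} = (\cdot)^2$, but checking that it carries $N_B(B^*)$ into $N_B(U(B,\sigma)(k))$ is exactly a norm-one instance of the Lemma itself, so this route is circular; the transfer must instead be built at the level of the reduced-norm cokernels of the special unitary group (equivalently, on the abelian invariant underlying $H^1(k, SU(B,\sigma))$, which for degree $3$ is a $3$-primary group). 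This is precisely the content packaged in Merkurjev's norm principle for the reduced norm of a unitary group and in the computations of \cite{KMRT}, 40.13, and is the input I would quote to close the argument; the characteristic-free version is the one established in Lemma 5.4 of \cite{P}.
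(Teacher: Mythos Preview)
The paper gives no proof of its own here: the lemma is stated with a parenthetical reference to \cite{Me}, \cite{KMRT}~(40.13), and \cite{P}~(Lemma~5.4), and nothing more. Your proposal lands in the same place. After correctly recasting the claim as the vanishing of $[\lambda]$ in the $3$-torsion group $C_k=T(k)/N_B(U(B,\sigma)(k))$ and noting that its image in $C_K=K^*/N_B(B^*)$ is trivial by hypothesis, you recognise that the remaining transfer step cannot be supplied by the naive map $x\mapsto x/\bar x$ (your circularity diagnosis is exactly right) and you defer to precisely those same sources. So at the level of what is actually established, the two ``proofs'' are equivalent citations.

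One remark on the route: the restriction--corestriction packaging is heavier than needed and is not really how the cited references argue. Once you have your $3$-torsion observation, a direct construction finishes without any transfer. Using the factorisation $w=zs$ with $\sigma(s)=s$ and $z\sigma(z)=\sigma(z)z$ (Proposition~\ref{yanchevskii}; its proof is independent of the present lemma, and the non-division cases of the lemma are elementary), one has $N_B(s)\in k^*$, whence $\lambda/\bar\lambda=N_B(z)/\overline{N_B(z)}$; since $\bar\lambda=\lambda^{-1}$ this equals $\lambda^2$. The element $q_0:=z\sigma(z)^{-1}$ is then unitary (because $z$ and $\sigma(z)$ commute) with $N_B(q_0)=\lambda^2$. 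Combined with your own observation $N_B(\lambda\cdot 1_B)=\lambda^3$, the element $q:=\lambda\,q_0^{-1}\in U(B,\sigma)$ satisfies $N_B(q)=\lambda$, and no corestriction is needed.
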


First, let $S$ be a $9$-dimensional subalgebra of $A$. Then we may assume $S=(B,\sigma)_+$ for a degree $3$ central simple algebra with center $K$, 
a quadratic \'{e}tale extension of $k$ (see \cite{PR6}). 

We then have 
$A\cong J(B,\sigma, u,\mu)$ for suitable parameters. 
Recall that any automorphism of $S$ is of the form $x\mapsto pxp^{-1}$ with $p\in Sim(B,\sigma)$, where $Sim(B,\sigma)=\{g\in B^*|g\sigma(g)\in k^*\}$. This is immediate from the fact that any norm similarity of $S$ is of the form $x\mapsto \gamma gx\sigma(g)$ for some $\gamma\in k^*$ and $g\in B^*$ (see Thm. 5.12.1, \cite{J3}) and that among these, automorphisms are precisely the norm similarities that fix the identity element of $S$ (see \cite{J-4}, Thm. 4). Recall the notation $\sigma_u:=Int(u)\circ\sigma$. We have,
\begin{proposition}\label{aut-ext} Let $A=J(B,\sigma, u,\mu)$ be an Albert algebra over $k$, $S:=(B,\sigma)_+$ and $K=Z(B)$ be as above. Let $\phi\in Aut(S)$ be given by $\phi(x)=gxg^{-1}$ for 
$g\in Sim(B,\sigma)$ with $g\sigma(g)=\lambda\in k^*$ and $\nu:=N_B(g)\in K^*$. Let $q\in U(B,\sigma_u)$ be arbitrary with $N_B(q)=\bar{\nu}^{-1}\nu$. Then the map $\widetilde{\phi}:A\rightarrow A$, given by 
$$(a,b)\mapsto (gag^{-1},\lambda^{-1}\sigma(g)^{\#}bq),$$ 
is an automorphism of $A$ extending $\phi$.  
\end{proposition}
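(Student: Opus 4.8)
The plan is to show that $\widetilde\phi$ is a norm-preserving linear bijection of $A$ fixing the identity, and then to conclude via the fact (recorded in the preliminaries) that $\mathbf{Aut}(A)$ is exactly the stabilizer of $1$ in $\mathbf{Str}(A)$, the latter being the group of norm similarities. Concretely, it suffices to verify three things: (i) $\widetilde\phi$ is a $k$-linear bijection of $A=(B,\sigma)_+\oplus B$ restricting to $\phi$ on the first summand; (ii) $\widetilde\phi(1)=1$; and (iii) $N(\widetilde\phi(y))=N(y)$ for all $y\in A$. Items (i) and (ii) are immediate from the explicit formula: $a\mapsto gag^{-1}$ is the automorphism $\phi$ of $S=(B,\sigma)_+$, landing in $S$ because $g\sigma(g)=\lambda$ gives $\sigma(g)=\lambda g^{-1}$ and hence $\sigma(gag^{-1})=gag^{-1}$ when $\sigma(a)=a$; the map $b\mapsto\lambda^{-1}\sigma(g)^\#bq$ is a bijection of $B$ since $g,q$ are units and $\lambda\in k^*$; and $\widetilde\phi(1_B,0)=(1_B,0)$. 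So the entire content is the norm identity (iii).

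To prove (iii) I would write $y=(a,b)$ and $\widetilde\phi(y)=(a',b')$ with $a'=gag^{-1}$, $b'=\lambda^{-1}\sigma(g)^\#bq$, and compare the three summands of $N((a,b))=N_B(a)+T_K(\mu N_B(b))-T_B(abu\sigma(b))$ term by term. The first summand is handled by multiplicativity of the reduced norm: $N_B(a')=N_B(g)N_B(a)N_B(g^{-1})=N_B(a)$. For the second summand I would use $N_B(x^\#)=N_B(x)^2$, $N_B(\sigma(g))=\overline{N_B(g)}=\bar\nu$, the hypothesis $N_B(q)=\bar\nu^{-1}\nu$, and the scalar relation $\lambda^3=\nu\bar\nu$ (obtained by applying $N_B$ to $g\sigma(g)=\lambda 1_B$), which together give $N_B(b')=\lambda^{-3}\,\bar\nu^{2}\,N_B(b)\,\bar\nu^{-1}\nu=N_B(b)$; hence $T_K(\mu N_B(b'))=T_K(\mu N_B(b))$.

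The third (trace) summand is the main obstacle, and it is precisely where the choice of $q$ and the twisted involution $\sigma_u$ enter. I would first record the auxiliary identities: $\sigma(x^\#)=\sigma(x)^\#$ for the second-kind involution, whence $\sigma(\sigma(g)^\#)=g^\#$; the unitarity relation $qu\sigma(q)=u$, coming from $q\in U(B,\sigma_u)$ with $\sigma_u=\mathrm{Int}(u)\circ\sigma$; and the elementary consequences $\sigma(g)=\lambda g^{-1}$, $g^\#=\nu g^{-1}$, $\sigma(g)^\#=\lambda^2\nu^{-1}g$ of $g\sigma(g)=\lambda$ and $gg^\#=\nu$. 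Substituting $b'$ together with $\sigma(b')=\lambda^{-1}\sigma(q)\sigma(b)g^\#$ and pulling out scalars yields $a'b'u\sigma(b')=\lambda^{-2}\,gag^{-1}\sigma(g)^\#\,b\,(qu\sigma(q))\,\sigma(b)\,g^\#$; the unitarity replaces $qu\sigma(q)$ by $u$, the identity $g^{-1}\sigma(g)^\#=\lambda^2\nu^{-1}1_B$ cancels the remaining scalar, and $g^\#=\nu g^{-1}$ collapses the whole expression to the conjugate $g\big(abu\sigma(b)\big)g^{-1}$. Since the reduced trace is conjugation-invariant, $T_B(a'b'u\sigma(b'))=T_B(abu\sigma(b))$, which finishes (iii).

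With (i)--(iii) in hand, $\widetilde\phi$ is a norm similarity of multiplier $1$ fixing $1$, hence lies in the stabilizer of $1$ in $\mathbf{Str}(A)$, which is $\mathbf{Aut}(A)$; and $\widetilde\phi|_S=\phi$ makes it the desired extension. (Equivalently, preservation of $N$ and of the base point forces preservation of the derived trace form $T$ and adjoint $\#$, hence of every $U$-operator, giving a Jordan automorphism directly.) I expect the bookkeeping in the trace term -- tracking the second-kind involution through the adjoint and arranging the cancellation $qu\sigma(q)=u$ to line up exactly -- to be the only genuine difficulty; the remaining verifications are formal manipulations with $N_B$, $T_K$, and the relations above.
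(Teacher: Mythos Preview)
Your proposal is correct and follows essentially the same approach as the paper: both verify that $\widetilde{\phi}$ is a $k$-linear bijection with $\widetilde{\phi}(1)=1$ and $N\circ\widetilde{\phi}=N$, using the same auxiliary identities ($\lambda^3=\nu\bar\nu$, $\sigma(g)^{\#}g^{\#}=\lambda^2$, $g^{\#}=\nu g^{-1}$, and the unitarity relation $qu\sigma(q)=u$ from $q\in U(B,\sigma_u)$), and then conclude that $\widetilde{\phi}$ is an automorphism. The only cosmetic differences are that you organize the norm computation summand by summand while the paper runs it as a single chain, and that you invoke ``$\mathbf{Aut}(A)$ is the stabilizer of $1$ in $\mathbf{Str}(A)$'' where the paper cites McCrimmon's criterion directly; these are equivalent justifications.
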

\begin{proof} To show $\widetilde{\phi}$ is an automorphism, we first note that $\widetilde{\phi}$ is a $k$-linear bijection. Hence, by (\cite{McK}, Page 507), it suffices to check $\widetilde{\phi}(1)=1$ and 
$\widetilde{\phi}$ preserves norms. The first condition is obvious. Let us verify the second. We first note some formulae which will be used in the computation below: 
$$N_B(\sigma(g))=\overline{\nu},~N_B(g^{\#})=N_B(g)^2=\nu^2,~N_B(\sigma(g)^{\#})=\overline{\nu}^2,~N_B(g\sigma(g))=\nu\overline{\nu}=\lambda^3,$$
$$N_B(g)=gg^{\#}=\nu,~\lambda^2=(g\sigma(g))^{\#}=\sigma(g)^{\#}g^{\#},~N_B(q)=\overline{\nu}^{-1}\nu.$$
Also, since $q\in U(B,\sigma_u)$, we have
$$1=q\sigma_u(q)=qu\sigma(q)u^{-1},~~qu=u\sigma(q)^{-1}.$$
Now, for $(a,b)\in A$, we compute using the above formulae,
$$N(\widetilde{\phi}(a,b))=N((gag^{-1},\lambda^{-1}\sigma(g)^{\#}bq))~~~~~~~~~~~~~~~~~~~~~~~~~~~~~~~~~~~~~~~~~~~~~~~~~~~~~~~~~~~~~~~~~~~~~~$$
$$=N_B(gag^{-1})+T_K(\mu N_B(\lambda^{-1}\sigma(g)^{\#}bq))-T_B(gag^{-1}(\lambda^{-1}\sigma(g)^{\#}bq)u\sigma(\lambda^{-1}\sigma(g)^{\#}bq))~~~~~~~~~~~~~~~~~~~~~~~~~~~~~~~~~~~~~$$
$$=N_B(a)+T_K(\mu \lambda^{-3}\bar{\nu}^2N_B(b)N_B(q))-T_B(gag^{-1}(\lambda^{-1}\sigma(g)^{\#}bq)u(\sigma(q)\sigma(b) g^{\#}\lambda^{-1}))~~~~~~~~~~~~~~~~~~~~~~~~~~~~~~~~~~~~~$$
$$=N_B(a)+T_K(\mu(\nu\bar{\nu})^{-1}\bar{\nu}^2 N_B(b)\bar{\nu}^{-1}\nu)-T_B(gag^{-1}\lambda^{-1}\lambda^2(g^{\#})^{-1}bqu\sigma(q)\sigma(b)\nu g^{-1}\lambda^{-1})~~~~~~~~~~~~~~~~~~~~~~~~~~~~~~~$$
$$=N_B(a)+T_K(\mu N_B(b))-T_B(gag^{-1}g\nu^{-1} b q u\sigma(q)\sigma(b)\nu g^{-1})~~~~~~~~~~~~~~~~~~~~~~~~~~~~~~~~~~~~~~~~~~~~~~~~~~~~~~~~~~~~~~~~~~~~~~~~$$ 
$$=N_B(a)+T_K(\mu N_B(b))-T_B(a b u\sigma(b))=N((a,b)).~~~~~~~~~~~~~~~~~~~~~~~~~~~~~~~~~~~~~~~~~~~~~~~~~~~~~~~~~~~~~~~~~~~~~~~~~~~~~~~~~~~~~~~~~~~~~~~~~~$$
Hence $\widetilde{\phi}$ is an automorphism of $A$ extending $\phi$.
\end{proof}
\begin{corollary}\label{first-aut-ext} Let $A=J(D,\mu)$ be an Albert algebra, which is a first construction. Let $\phi\in Aut(D_+)$ be given by $\phi(x)=gxg^{-1}$ for $g\in D^*$ and $x\in D_+$. Then the map 
$$(x,y,z)\mapsto (gxg^{-1}, gyh^{-1}, hzg^{-1}), $$
for any $h\in D^*$ with $N_D(g)=N_D(h)$, is an automorphism of $A$ extending $\phi$. 
\end{corollary}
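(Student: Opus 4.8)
The plan is to mimic the proof of Proposition \ref{aut-ext}: by the criterion of (\cite{McK}, Page 507), a $k$-linear bijection of a cubic norm structure that fixes the identity element and preserves the norm form is an automorphism of the associated Jordan algebra. So I would set $\psi(x,y,z) := (gxg^{-1}, gyh^{-1}, hzg^{-1})$ and reduce the claim to verifying these three properties for $\psi$, where $N$ and $1 = (1,0,0)$ are those of the first Tits process $J(D,\mu)$.

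Two of the three are immediate. Since $g, h \in D^*$ are invertible, each of the coordinate maps $x \mapsto gxg^{-1}$, $y \mapsto gyh^{-1}$, $z \mapsto hzg^{-1}$ is a $k$-linear bijection of $D$, so $\psi$ is a linear bijection; and $\psi(1,0,0) = (g\cdot 1\cdot g^{-1}, 0, 0) = (1,0,0)$. The content is therefore the norm identity $N(\psi(x,y,z)) = N(x,y,z)$. Expanding $N((x,y,z)) = N_D(x) + \mu N_D(y) + \mu^{-1} N_D(z) - T_D(xyz)$, I would treat the four terms separately. Using multiplicativity of $N_D$ and $N_D(g^{-1}) = N_D(g)^{-1}$, one gets $N_D(gxg^{-1}) = N_D(x)$, while $N_D(gyh^{-1}) = N_D(g)N_D(h)^{-1}N_D(y)$ and $N_D(hzg^{-1}) = N_D(h)N_D(g)^{-1}N_D(z)$; here the hypothesis $N_D(g) = N_D(h)$ is exactly what forces these two to collapse to $N_D(y)$ and $N_D(z)$. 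For the trace term the conjugating factors telescope, $gxg^{-1}\cdot gyh^{-1}\cdot hzg^{-1} = g(xyz)g^{-1}$, and invariance of $T_D$ under conjugation gives $T_D(g(xyz)g^{-1}) = T_D(xyz)$. Summing the four contributions yields $N(\psi(x,y,z)) = N(x,y,z)$.

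Finally, $\psi$ extends $\phi$, since on the first summand $\psi(x,0,0) = (gxg^{-1},0,0)$, which is $\phi(x)$ under the identification of $D_+$ with the first copy of $D$. There is no genuine obstacle here: unlike Proposition \ref{aut-ext}, the first construction requires no auxiliary unitary element $q$ and no norm-similarity bookkeeping, so the only point demanding care is placing the condition $N_D(g) = N_D(h)$ precisely where the two mixed norm terms are compared. I would also note that the statement can alternatively be deduced from Proposition \ref{aut-ext} by specializing to $K = k \times k$, $B = D \times D^{\circ}$ with $\sigma$ the switch involution, but the direct verification above is shorter and more transparent.
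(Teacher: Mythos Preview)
Your proof is correct. The direct norm computation is clean and your placement of the hypothesis $N_D(g)=N_D(h)$ is exactly right.

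The paper, however, takes the alternative route you mention at the end as its \emph{only} proof: it identifies $J(D,\mu)$ with the second construction $J(D\times D^{\circ},\epsilon,1,(\mu,\mu^{-1}))$ (switch involution $\epsilon$) and invokes Proposition~\ref{aut-ext} directly, with no separate computation. So your primary argument is a genuinely different approach---a self-contained verification rather than a specialization. What your route buys is transparency and independence from the second-construction machinery; what the paper's route buys is economy, since the work was already done in Proposition~\ref{aut-ext}. Either way, the content is the same: your final paragraph already identifies the paper's argument, so you have in effect given both proofs.
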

\begin{proof} We identify $A=J(D,\mu)$ with the second construction $J(D\times D^{\circ}, \epsilon, 1, (\mu,\mu^{-1}))$, where $\epsilon$ is the switch involution on $B=D\times D^{\circ}$ and apply the proposition.  
\end{proof}
\noindent
Note that, for $g\in Sim(B,\sigma)$, the extension $\widetilde{\phi}$ of $\phi$ as constructed above, can be simplified as follows: let $p=\lambda\nu^{-1} g$, where $g\sigma(g)=\lambda\in k^*$ and $\nu=N_B(g)\in K^*$. Then $\lambda^3=\nu\bar{\nu}$ and $p\in U(B,\sigma)$, since 
$$p\sigma(p)=\lambda\nu^{-1}g\lambda\bar{\nu}^{-1}\sigma(g)=\lambda^2(\nu\bar{\nu})^{-1}(g\sigma(g))=\lambda^3\lambda^{-3}=1.$$
Hence the expression for $\widetilde{\phi}$, using $g\sigma(g)=\lambda,~\sigma(g^{\#})g^{\#}=\lambda^2$, $g g^{\#}=\nu$, becomes,
$$\widetilde{\phi}(a,b)=(gag^{-1},\lambda^{-1}\sigma(g^{\#})bq)$$
$$~~~~~~~~~~~~~~~~~~~~~~~=(\lambda\nu^{-1}ga\lambda^{-1}\nu g^{-1},\lambda^{-1}\bar{\nu}\sigma(g)^{-1}bq)$$
$$=(pap^{-1}, \sigma(\lambda^{-1}\nu g^{-1})bq)=(pap^{-1},\sigma(p)^{-1}bq)=(pap^{-1}, pbq).~~~~~~~$$
\noindent
We have,
\begin{proposition} Let $A=J(B,\sigma, u, \mu)$ be an Albert algebra and $S:=(B,\sigma)_+\subset A$ and $K=Z(B)$, a quadratic \'{e}tale extension of $k$. Then any automorphism of $A$ stabilizing $S$ is of the form $(a,b)\mapsto (pap^{-1}, pbq)$ for $p\in U(B,\sigma)$ and $q\in U(B,\sigma_u)$ with 
$N_B(p)N_B(q)=1$. We have 
$$Aut(A, (B,\sigma)_+)\cong [U(B,\sigma)\times U(B,\sigma_u)]^{det}/K^{(1)},$$
where
$$[U(B,\sigma)\times U(B,\sigma_u)]^{det}:=\{(p,q)\in U(B,\sigma)\times U(B,\sigma_u)| N_B(p)=N_B(q)\},$$
and $K^{(1)}$ denotes the group of norm $1$ elements in $K$, embedded diagonally.
\end{proposition}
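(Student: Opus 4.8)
The plan is to establish the pointwise description of the automorphisms first, and then repackage it as the asserted group isomorphism. Let $\psi$ be an automorphism of $A$ with $\psi(S)=S$, where $S=(B,\sigma)_+$. Since $\psi$ fixes the identity and preserves the Jordan structure on $S$, its restriction $\phi:=\psi|_S$ is an automorphism of $S$, so by the description recalled before Proposition~\ref{aut-ext} we may write $\phi(x)=gxg^{-1}$ with $g\in Sim(B,\sigma)$; put $\lambda=g\sigma(g)\in k^*$ and $\nu=N_B(g)\in K^*$, so that $\lambda^3=\nu\overline{\nu}$. Proposition~\ref{aut-ext}, together with the normalization remark following Corollary~\ref{first-aut-ext}, then furnishes a canonical extension $\widetilde{\phi}$ of $\phi$ of the shape $(a,b)\mapsto(pap^{-1},pbq_0)$, where $p=\lambda\nu^{-1}g\in U(B,\sigma)$ and $q_0\in U(B,\sigma_u)$ with $N_B(q_0)=\overline{\nu}^{-1}\nu$ (such a $q_0$ existing by Lemma~\ref{unitary}).

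Next I would compare $\psi$ against this canonical extension. Both $\psi$ and $\widetilde{\phi}$ restrict to $\phi$ on $S$, so $\rho:=\psi\circ\widetilde{\phi}^{-1}$ fixes $S$ pointwise and hence lies in $Aut(A/S)$. By Proposition~\ref{rational}(2) this group is $SU(B,\sigma_u)$, realized as the maps $(a,b)\mapsto(a,bq_1)$ with $q_1\in U(B,\sigma_u)$ and $N_B(q_1)=1$. Composing $\psi=\rho\circ\widetilde{\phi}$ yields $\psi(a,b)=(pap^{-1},pbq)$ with $q=q_0q_1\in U(B,\sigma_u)$, which is the required form. The norm relation is then immediate bookkeeping in $K$: from $p=\lambda\nu^{-1}g$ and $\lambda^3=\nu\overline{\nu}$ one computes $N_B(p)=\overline{\nu}\nu^{-1}$, while $N_B(q)=N_B(q_0)=\overline{\nu}^{-1}\nu$, whence $N_B(p)N_B(q)=1$. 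Conversely, given any pair with $p\in U(B,\sigma)$, $q\in U(B,\sigma_u)$ and $N_B(p)N_B(q)=1$, applying Proposition~\ref{aut-ext} with $g=p$ and absorbing the resulting central scalar shows that $(a,b)\mapsto(pap^{-1},pbq)$ is an automorphism fixing $S$; this proves the first assertion.

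For the isomorphism I would define $\Theta:[U(B,\sigma)\times U(B,\sigma_u)]^{det}\to Aut(A,S)$ by $\Theta(p,q)(a,b)=(pap^{-1},pbq^{-1})$. Passing from $q$ to $q^{-1}$ is precisely what converts the defining constraint $N_B(p)=N_B(q)$ of the determinant subgroup into the constraint $N_B(p)N_B(q^{-1})=1$ demanded by the first part, so $\Theta$ lands in $Aut(A,S)$ and is well defined. A one-line computation gives $\Theta(p_1,q_1)\circ\Theta(p_2,q_2)=\Theta(p_1p_2,q_1q_2)$, so $\Theta$ is a homomorphism, and it is surjective by the pointwise description just obtained. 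Finally $\Theta(p,q)=\mathrm{id}$ forces $p$ to centralize $(B,\sigma)_+$, hence $p\in Z(B)=K$ and then $p\in K^{(1)}$ since $p\in U(B,\sigma)$; the second coordinate then forces $q=p$. Thus $\ker\Theta$ is the diagonal copy of $K^{(1)}$, and the isomorphism follows.

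The computations are all elementary, so the real work is the bookkeeping of reduced norms in $K$, and the one genuinely delicate point is the interplay between the two norm constraints: the pointwise statement carries $N_B(p)N_B(q)=1$ while the determinant subgroup is cut out by $N_B(p)=N_B(q)$. Reconciling these, and simultaneously arranging that $\Theta$ be a homomorphism with kernel the \emph{diagonal} $K^{(1)}$ rather than the anti-diagonal, is what dictates parametrizing the second coordinate by $q^{-1}$; identifying exactly which central elements act trivially is the step I would be most careful about.
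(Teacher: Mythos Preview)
Your proof is correct and follows essentially the same route as the paper's: restrict to $S$, write the restriction as $Int(g)$ for $g\in Sim(B,\sigma)$, extend via Proposition~\ref{aut-ext} in the normalized form $(a,b)\mapsto(pap^{-1},pbq_0)$ with $p=\lambda\nu^{-1}g\in U(B,\sigma)$, compare with $\psi$ using $Aut(A/S)\cong SU(B,\sigma_u)$, and then define the homomorphism $(p,q)\mapsto[(a,b)\mapsto(pap^{-1},pbq^{-1})]$ and compute its kernel. The only cosmetic differences are that the paper composes on the other side ($\widetilde{\eta}^{-1}\phi$ rather than your $\psi\circ\widetilde{\phi}^{-1}$) and cites \cite{KMRT}, 39.16 directly for the description of $Aut(A/S)$, whereas you invoke Proposition~\ref{rational}(2); your remark on the $q\leftrightarrow q^{-1}$ bookkeeping and the diagonal-versus-antidiagonal kernel is a nice clarification the paper leaves implicit.
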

\begin{proof} Let $\phi\in Aut(A,S)$ and let $\eta:=\phi|S\in Aut(S)$. Then $\eta(a)=gag^{-1}$ for some $g\in Sim(B,\sigma)$. Let $\lambda:=g\sigma(g)\in k^*$ and $\nu:=N_B(g)\in K^*$. We extend $\eta$ to an automorphism $\widetilde{\eta}$ of $A$ by the formula 
$$\widetilde{\eta}(a,b)=(pap^{-1}, pbq'),$$
where $p=\lambda\nu^{-1} g$ and $q'\in U(B,\sigma_u)$ is arbitrary with $N_B(q')N_B(p)=1$. Then $p\in U(B,\sigma)$ as discussed above and the automorphism 
$\widetilde{\eta}^{-1}\phi$ of $A$ fixes $S=(B,\sigma)_+$ pointwise. Hence, by (\cite{KMRT}, 39.16), 
$$\widetilde{\eta}^{-1}\phi(a,b)=(a,bq'')$$
for some $q''\in SU(B,\sigma_u)$. Therefore we get 
$$\phi(a,b)=\widetilde{\eta}(a,bq'')=(pap^{-1},pbq''q').$$ 
We set $q=q''q'\in U(B,\sigma_u)$ and the proof of the first assertion is complete. Now, if $q\in U(B,\sigma_u)$ is arbitrary, with $N_B(q)=\nu\in K^*$. Then $\nu\overline{\nu}=1$. Hence, by Lemma \ref{unitary}, there exists $p\in U(B,\sigma)$ with $N_B(p)=\nu^{-1}$. 

Then, by the computations done before, it follows that $(b,x)\mapsto (pbp^{-1}, pxq)$ is an automorphism of $A$ stabilizing $(B,\sigma)_+$. Hence any automorphism $\phi$ of $A$ which stabilizes $(B,\sigma)_+$ is precisely of the form $(b,x)\mapsto (pbp^{-1}, pxq)$ for some $p\in U(B,\sigma),~q\in U(B,\sigma_u)$ with $N_B(p)N_B(q)=1$. 

The map $f:[U(B,\sigma)\times U(B,\sigma_u)]^{det}\rightarrow Aut(A,(B,\sigma)_+)$ given by $(p,q)\mapsto [(b,x)\mapsto (pbp^{-1}, pxq^{-1})]$ is a surjective homomorphism with 
$ker(f)= \{(p,p)|p\in K^{(1)}\}\cong K^{(1)}$, inducing the required isomorphism between the two groups.  
\end{proof}
\begin{corollary}\label{aut-ext-first} Let $A=J(D,\mu)$ be a first Tits construction Albert algebra. Then any automorphism of $A$ stabilizing $D_+\subset A$ is of the form 
$$(x,y,z)\mapsto (gxg^{-1},gyh^{-1}, hzg^{-1}),$$
where $g, h\in D^*$ and $N_D(h)=N_D(g)$. 
\end{corollary}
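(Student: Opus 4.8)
The plan is to deduce this from the preceding Proposition, which describes all automorphisms of a second Tits construction Albert algebra that stabilize its distinguished $9$-dimensional subalgebra, by specializing the second construction to the degenerate case that encodes a first construction. Concretely, following Corollary \ref{first-aut-ext}, I identify $A=J(D,\mu)$ with the second construction $J(B,\epsilon,1,(\mu,\mu^{-1}))$, where $B=D\times D^{\circ}$ carries the switch involution $\epsilon$ over the split quadratic \'{e}tale algebra $K=k\times k$, and $u=1$. Under this identification the subalgebra $D_+\subset A$ corresponds to $(B,\epsilon)_+=\{(a,a):a\in D\}$, and a general element $(x,y,z)\in D\oplus D\oplus D$ of $A$ corresponds to the pair $((x,x),(y,z))\in(B,\epsilon)_+\oplus B$.

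Next I record the shape of the two unitary groups occurring in the Proposition. A direct computation in $B=D\times D^{\circ}$, using that multiplication in the second factor is opposite to that in $D$, gives $(a,b)\,\epsilon(a,b)=(ab,ab)$, so that $U(B,\epsilon)=\{(g,g^{-1}):g\in D^*\}\cong D^*$. Since $u=1$ we have $\sigma_u=\epsilon$, hence $U(B,\sigma_u)=U(B,\epsilon)$ is the same group. Writing $p=(g,g^{-1})$ and $q=(h^{-1},h)$ with $g,h\in D^*$, the Proposition asserts that every automorphism of $A$ stabilizing $D_+$ has the form $(s,w)\mapsto(psp^{-1},pwq)$ for $s\in(B,\epsilon)_+$, $w\in B$, subject to $N_B(p)N_B(q)=1$.

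It then remains to translate both the formula and the norm condition into first-construction language. Computing $psp^{-1}$ on $s=(x,x)$ gives $(gxg^{-1},gxg^{-1})$, i.e. the map $x\mapsto gxg^{-1}$ on $D_+$; computing $pwq$ on $w=(y,z)$, again keeping track of the opposite multiplication, yields $(gyh^{-1},hzg^{-1})$. Finally, since $N_B(g,g^{-1})=(N_D(g),N_D(g)^{-1})$ and $N_B(h^{-1},h)=(N_D(h)^{-1},N_D(h))$ in $K=k\times k$, the relation $N_B(p)N_B(q)=1$ reads exactly $N_D(g)=N_D(h)$. Assembling these pieces, the automorphism becomes $(x,y,z)\mapsto(gxg^{-1},gyh^{-1},hzg^{-1})$ with $N_D(g)=N_D(h)$, as claimed.

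The only point demanding care, and the step I would check most closely, is the bookkeeping forced by the opposite algebra $D^{\circ}$: the products $pwq$ in $B$ mix ordinary and opposite multiplications, and it is precisely this asymmetry that produces $gyh^{-1}$ in one coordinate and $hzg^{-1}$ in the other, as well as the exact matching of the norm condition. Everything else is a direct specialization of the Proposition; indeed Corollary \ref{first-aut-ext} already verifies that maps of this form are genuine automorphisms extending $x\mapsto gxg^{-1}$, so the content added here is exactly that such maps exhaust the stabilizer of $D_+$ in $\text{\bf Aut}(A)$.
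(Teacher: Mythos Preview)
Your proof is correct and follows exactly the approach of the paper, which simply says to identify $J(D,\mu)$ with $J(D\times D^{\circ},\epsilon,1,(\mu,\mu^{-1}))$ and apply the Proposition. You have merely (and correctly) spelled out the details of this identification and the resulting translation of the formula and norm condition.
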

\begin{proof} As before, we identify $A=J(D,\mu)$ with a second Tits construction $J(D\times D^{\circ},\epsilon, 1, (\mu,\mu^{-1}))$ and apply the proposition.
\end{proof}
\section{\bf Norm similarities and $R$-triviality}
In the last section, we extended automorphisms of a $9$-dimensional subalgebra of an Albert algebra to an automorphism of the Albert algebra. In this section, we analyse norm similarities of an Albert algebra $A$ in the same spirit, study rationality and $R$-triviality properties of some subgroups of {\bf Str}$(A)$. 

First we make some simple observations. Since any norm similarity of $A$ fixing the identity element $1$ of $A$ is necessarily an automorphism of $A$, it follows that for any $k$-subalgebra $S\subset A$ the subgroup {\bf Str}$(A/S)$ of all norm similarities that fix $S$ pointwise, is equal to the subgroup {\bf Aut}$(A/S)$ of {\bf Aut}$(A)$, i.e., {\bf Str}$(A/S)=${\bf Aut}$(A/S)$. Recall that we denote the (normal) subgroup of $Str(A)$ generated by the $U$-operators of $A$, by $Instr(A)$. We will now prove some results on norm similarities that will be needed in the paper. We begin with
\begin{theorem}\label{Instr} Let $A$ be any Albert algebra. Then $Instr(A)\subset R(${\bf Str}$(A)(k))$.
\end{theorem}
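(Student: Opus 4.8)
The plan is to use that $R(\mathbf{Str}(A)(k))$ is a subgroup of $Str(A)$, so it suffices to show that each generator of $Instr(A)$ — namely each $U$-operator $U_a$ with $a\in A$ invertible — is $R$-equivalent to the identity element of $Str(A)$. Recall that $U_1=\mathrm{id}$ is the identity norm similarity, that $U_1$ corresponds to $1\in Str(A)$, and that for any $b\in A$ with $N(b)\neq 0$ the operator $U_b$ is a norm similarity with multiplier $N(b)^2$. In particular $b\mapsto U_b$ is polynomial (indeed quadratic) in the coordinates of $b$, and $U_b\in GL(A)$ exactly when $N(b)\neq 0$; thus $b\mapsto U_b$ defines a morphism from the open subvariety $\{b\in A : N(b)\neq 0\}$ of the affine space $A$ into $\mathbf{Str}(A)$.

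The heart of the argument is to connect $U_a$ to $U_1$ by an explicit rational curve. Given $a$ invertible, I would consider the affine line $a(t):=1+t(a-1)$ in $A$, so that $a(0)=1$ and $a(1)=a$. The function $t\mapsto N(a(t))$ is a polynomial of degree at most $3$ in $t$ with $N(a(0))=N(1)=1\neq 0$ and $N(a(1))=N(a)\neq 0$; hence it is nonzero at $t=0$ and $t=1$, vanishing for at most finitely many values of $t$. Consequently the composite $f(t):=U_{a(t)}$ is a $k$-rational map $f:\mathbb{A}^1_k\to \mathbf{Str}(A)$, regular at $0$ and at $1$, with $f(0)=U_1=1$ and $f(1)=U_a$. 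By the definition of $R$-equivalence this exhibits $U_a\in R(\mathbf{Str}(A)(k))$.

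Finally, since $R(\mathbf{Str}(A)(k))$ is a (normal) subgroup of $Str(A)$ and contains every $U_a$ with $a$ invertible, it contains the subgroup $Instr(A)$ that these operators generate; products and inverses cause no difficulty precisely because $R(\mathbf{Str}(A)(k))$ is a subgroup. This yields $Instr(A)\subset R(\mathbf{Str}(A)(k))$, as required.

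The only point demanding care — and the closest thing to an obstacle — is confirming that $f$ genuinely factors through $\mathbf{Str}(A)$ and is regular at the two endpoints. This reduces entirely to the fact that $U_{a(t)}$ is invertible (equivalently, is a norm similarity) exactly where $N(a(t))\neq 0$, a condition we have arranged at $t=0$ and $t=1$ by choosing the line through $1$ and $a$. Beyond this verification the proof requires no further computation, and the argument applies to an arbitrary Albert algebra $A$, with no appeal to the first-construction hypothesis.
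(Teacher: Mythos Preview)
Your proof is correct and follows essentially the same approach as the paper: both connect $U_a$ to $U_1=\mathrm{id}$ by the affine line through $1$ and $a$ in $A$, observing that $t\mapsto U_{a(t)}$ is a $k$-rational map into $\mathbf{Str}(A)$ regular at the endpoints. The only cosmetic difference is that the paper treats a product $U_aU_bU_c\cdots$ all at once by deforming every factor simultaneously, whereas you handle a single $U_a$ and then invoke the subgroup property of $R(\mathbf{Str}(A)(k))$; both organizations are equally valid.
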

\begin{proof} Let $\psi\in Instr(A)$. Then $\psi=U_aU_bU_c\cdots$ with $a,b,c,\cdots \in A$. Let, for $t\in\mathbb{A}^1_k,~a_t:=(1-t)a+t\in A_{\overline{k}}$, similarly $b_t, c_t,\cdots $ be defined. Let 
$\theta:\mathbb{A}_k^1\rightarrow${\bf Str}$(A)$ be defined by $\theta(t)=U_{a_t}U_{b_t}U_{c_t}\cdots$ Then $\theta$ is a rational map defined over $k$, with domain the (open) subset of $\mathbb{A}_k^1 $ consisting of $t$ such that $a_t, b_t, c_t, \cdots$ are invertible elements of $A_{\bar{k}}$. Clearly $\theta$ is regular and defined at $0$ and $1$ and $\theta(t)\in~${\bf Str}$(A)$ with $\theta(0)=\psi,~\theta(1)=1$. Hence the assertion is proved. 
\end{proof}
\noindent
The theorem below is a generalization of Corollary 4.2 of (\cite{Th-1}) over a field $k$ of arbitrary characteristic :
\begin{theorem}\label{fixedpoint} Let $A$ be an Albert division algebra over $k$ and $\phi\in Aut(A)$ an automorphism of $A$. Then $\phi$ fixes a cubic subfield of $A$ pointwise.
\end{theorem}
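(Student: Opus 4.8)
The plan is to reduce the statement to a dimension count for the fixed-point subalgebra $A^\phi := \{x\in A : \phi(x)=x\}$. Since $\phi$ is an automorphism, $A^\phi$ is a $k$-subalgebra of $A$ containing $1$. As $A$ is a division algebra, every subalgebra of $A$ is $k$, a cubic subfield, a nine-dimensional subalgebra, or $A$ itself, so every subalgebra of dimension at least $3$ contains a cubic subfield. Hence it suffices to show $\dim_k A^\phi\geq 3$: then $A^\phi$ contains a cubic subfield $L$, which is fixed pointwise by $\phi$ by the very definition of $A^\phi$.

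To bound $\dim_k A^\phi$ I would pass to an algebraic closure $\overline{k}$ and work inside $G:=\mathbf{Aut}(A)$, a group of type $F_4$ which is anisotropic over $k$ precisely because $A$ is a division algebra. The first key point is that $\phi$, as an element of $G(k)$, is semisimple: by the theorem of Borel and Tits, a nontrivial unipotent element of $G(k)$ would lie in the unipotent radical of a proper parabolic $k$-subgroup, contradicting anisotropy. This step is valid over arbitrary, possibly imperfect, $k$ because $G$ is genuinely reductive. Consequently $\phi$ lies in a maximal torus $T$ of $G_{\overline{k}}=\mathbf{Aut}(A\otimes_k\overline{k})$, the latter acting on the split Albert algebra $A\otimes_k\overline{k}\cong\mathcal{H}_3(C)$ with $C$ split.

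The second key point is a characteristic-free computation of the torus fixed space. Since all maximal tori of $G_{\overline{k}}$ are conjugate and conjugation preserves $\dim_{\overline{k}}(A\otimes_k\overline{k})^T$, I may compute it for a convenient torus. Realizing $\mathrm{Spin}_8\subset F_4$ as the pointwise stabilizer of the diagonal frame $e_{11},e_{22},e_{33}$ and choosing $T$ inside this $\mathrm{Spin}_8$ (which has the same rank $4$ as $F_4$), one sees that $T$ fixes the three diagonal idempotents, hence fixes the diagonal subalgebra $\langle e_{11},e_{22},e_{33}\rangle$ pointwise, whereas the three off-diagonal octonion coordinates carry the vector and half-spin representations of $\mathrm{Spin}_8$, all of whose weights are nonzero. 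Thus $(A\otimes_k\overline{k})^T$ is exactly this three-dimensional diagonal, in every characteristic. Since $\phi\in T(\overline{k})$, we obtain $\dim_k A^\phi=\dim_{\overline{k}}(A\otimes_k\overline{k})^\phi\geq \dim_{\overline{k}}(A\otimes_k\overline{k})^T=3$, which completes the argument via the reduction above.

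The main obstacle is the characteristic-free handling of the two key points. The naive characteristic-zero argument would invoke the $26$-dimensional representation and its zero weight of multiplicity two, but this representation and the trace form degenerate in characteristics $2$ and $3$, and in characteristic $3$ unipotent contributions can genuinely shrink $A^\phi$ (a $3$-cycle on a split cubic étale subalgebra is unipotent with one-dimensional fixed space). The anisotropy input is exactly what forces $\phi$ to be semisimple and thereby removes these unipotent subtleties, while the $\mathrm{Spin}_8$-frame description makes the fixed-space count robust, since the weights of the octonion coordinates are independent of the characteristic. Verifying these two inputs carefully over imperfect fields is where the real work lies.
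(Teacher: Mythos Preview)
Your proof is correct and follows essentially the same strategy as the paper's: reduce to a dimension count for $A^{\phi}$, pass to $\overline{k}$, place $\phi$ in a maximal torus of $F_4$, and show that such a torus fixes a space of dimension at least $3$. The paper conjugates the torus into $\mathrm{Spin}(9)$ (the stabilizer of a single primitive idempotent) while you use $\mathrm{Spin}(8)$ (the stabilizer of the full diagonal frame); both have rank $4$ and give the same conclusion, your version being a bit more explicit about the weight decomposition $27 = 3 \cdot \mathbf{1} \oplus V_8 \oplus S^{+} \oplus S^{-}$.

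The one genuine difference is in the reduction to semisimple $\phi$. The paper writes ``since the eigenvalues of $\phi$ are the same as the eigenvalues of its semisimple part, we may further assume $\phi$ is semisimple,'' which is not a valid justification: replacing $\phi$ by $\phi_s$ can only enlarge the fixed space, and in characteristic $3$ a unipotent $3$-cycle on the diagonal shows the gap is real. You instead invoke anisotropy of $\mathbf{Aut}(A)$ together with Borel--Tits to conclude that every $k$-rational point is semisimple outright, which is the correct way to close this step and is precisely what makes the argument uniform in the characteristic. So your argument is not merely a variant but in fact a repair of the paper's proof at this point; your awareness of the characteristic-$3$ obstruction is exactly why the anisotropy input is indispensable.
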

\begin{proof} Let $A^{\phi}$ denote the subalgebra of fixed points of $\phi$. In light of the fact that proper subalgebras of an Albert division algebras are of dimension $1$, $3$ or $9$, 
and that $Dim_k(A^{\phi})$ is invariant under base change, we may work with $A_{\overline{k}}$. Since the eigenvalues of $\phi$ are the same as the eigenvalues of 
its semisimple part, we may further assume $\phi$ is semisimple. Let $T$ be a maximal torus containing $\phi$. By (Theorem 3, \cite{F1}), the group of all automorphisms of $A_{\overline{k}}$ fixing a primitive idempotent is isomorphic to $Spin(9)$. We can conjugate $T$ to a maximal torus in $Spin(9)$ (over $\overline{k}$) to conclude that $T$ fixes a vector $w,~w\notin\overline{k}$. Hence $\phi$ fixes some $v\in A,~v\notin k$. Then $v$ generates a cubic subfield $L$ of $A$ and $\phi$ fixes $L$ pointwise. 
\end{proof}
\noindent
The following theorem is a generalization of Theorem 5.3 of (\cite{Th-1}) to arbitrary characteristics :
\begin{theorem}\label{cyclic-auto} Let $A$ be a first Tits construction Albert division algebra over $k$. Let $\phi\in Aut(A)$ be such that $\phi$ fixes a cubic cyclic subfield of $A$ pointwise. Then $\phi$ is a product of two automorphisms, each stabilizing a $9$-dimensional subalgebra of $A$.
\end{theorem}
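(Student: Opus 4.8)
The plan is to use the given cyclic cubic field $L$ to coordinatize $A$, and then to reduce the statement to moving one nine-dimensional subalgebra onto another by an automorphism that itself stabilizes a nine-dimensional subalgebra. First I would realize $A$ as a first Tits construction whose coordinate algebra contains $L$. Since $A$ is a first construction division algebra it is split over $L$; from this one deduces that $L$ splits the coordinate algebra, so $L$ embeds as a maximal subfield of a degree $3$ central simple algebra $D$ with $A \cong J(D,\mu)$ and $L \subset D_+$. As $L/k$ is cyclic, $D = (L/k,\rho,b)$ is a cyclic algebra. Because $\phi$ fixes $L$ pointwise, the image $S := \phi(D_+)$ is again a nine-dimensional subalgebra containing $L$, and being the image of $D_+$ under an automorphism it is again of the form $D''_+$ (not of the unitary type $(B,\sigma)_+$).

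The reduction then runs as follows. Suppose I can produce an automorphism $\phi_2$ of $A$ that stabilizes some nine-dimensional subalgebra $S_2$ and satisfies $\phi_2(D_+) = S = \phi(D_+)$. Then $\phi_1 := \phi_2^{-1}\phi$ sends $D_+$ to $\phi_2^{-1}(S) = D_+$, so $\phi_1$ stabilizes $D_+$, and $\phi = \phi_2\phi_1$ exhibits $\phi$ as a product of two automorphisms each stabilizing a nine-dimensional subalgebra. (If $\phi$ already stabilizes $D_+$ there is nothing to prove.) Thus everything is reduced to a single \emph{linking} statement: the two nine-dimensional subalgebras $D_+$ and $\phi(D_+)$, which share the cyclic cubic field $L$, lie in one orbit of the set of automorphisms that each stabilize a nine-dimensional subalgebra.

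To construct $\phi_2$ I would compare the two first Tits coordinatizations of $A$ attached to $D_+$ and to $S = \phi(D_+)$, both having $L$ as a common maximal cyclic subfield, and build the linking map explicitly from the extension formulae of \S 3. Automorphisms stabilizing $D_+$ have the normal form $(x,y,z) \mapsto (gxg^{-1}, gyh^{-1}, hzg^{-1})$ with $N_D(g)=N_D(h)$ (Corollary \ref{aut-ext-first}), while the extension mechanism of Proposition \ref{aut-ext}, together with Lemma \ref{unitary}, produces automorphisms stabilizing a nine-dimensional subalgebra with prescribed behaviour on a chosen copy of $L$. The point of insisting that $L \subset D_+ \cap \phi(D_+)$ and that $L/k$ be cyclic is precisely that it forces the two coordinate algebras to be compatible along $L$, which is what should make the linking automorphism exist.

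The main obstacle is exactly this linking step, carried out over $k$ and in arbitrary characteristic. In \cite{Th-1} the corresponding Theorem 5.3 could lean on characteristic-zero, Lie-theoretic input; here one must instead argue entirely through the explicit Tits-process formulae, verifying by direct computation (in the spirit of the proof of Proposition \ref{aut-ext}) that the constructed map preserves the cubic norm and hence is an automorphism, and that it stabilizes a nine-dimensional subalgebra. I expect the verification to split, if at all, only according to whether the auxiliary subalgebra $S_2$ is most conveniently taken of type $D'_+$ or of type $(B,\sigma)_+$, the latter bringing Lemma \ref{unitary} into play to solve the requisite norm equation.
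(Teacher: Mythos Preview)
Your reduction is sound: once you can produce a single automorphism $\phi_2$ that stabilizes some nine-dimensional subalgebra $S_2$ and carries $D_+$ onto $\phi(D_+)$, the factorization $\phi=\phi_2\cdot(\phi_2^{-1}\phi)$ does the job. The realization $L\subset D_+\subset A=J(D,\mu)$ via \cite{PR2}, Cor.~4.5, and the observation that $\phi(D_+)\supset L$ are also fine.

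The gap is exactly where you say it is, and your proposal does not close it. You want an automorphism that simultaneously stabilizes a nine-dimensional subalgebra \emph{and} maps $D_+$ onto the \emph{different} subalgebra $\phi(D_+)$; the extension formulae of Proposition~\ref{aut-ext} and Corollary~\ref{aut-ext-first} only manufacture automorphisms that stabilize a \emph{given} nine-dimensional subalgebra, they do not by themselves move one such subalgebra onto another. Nothing in your outline explains why two nine-dimensional subalgebras sharing the cyclic cubic field $L$ must lie in a single $Aut(A,S_2)$-orbit for some $S_2$, and this is not a formality.

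The paper's proof is not the direct ``linking'' construction you envisage. It follows the argument of \cite{Th-1}, Theorem~5.3 verbatim, with a single substitution: Theorem~\ref{fixedpoint} (every automorphism of an Albert division algebra fixes a cubic subfield pointwise) replaces the characteristic-restricted version from \cite{Th-1}. In particular the fixed-point theorem is actually \emph{used} inside the argument, applied to an auxiliary automorphism arising along the way, not merely to the given $\phi$ (which already fixes $L$ by hypothesis). Your proposal never invokes Theorem~\ref{fixedpoint}, and that is the missing ingredient. Your worry that the \cite{Th-1} argument relies on characteristic-zero Lie theory is therefore misplaced: the only characteristic-sensitive step in \cite{Th-1} is precisely the fixed-point lemma, and Theorem~\ref{fixedpoint} supplies it in arbitrary characteristic; once that is in hand the rest of the \cite{Th-1} proof goes through unchanged via the Tits-process formulae.
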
 
\begin{proof} The proof goes along the exact lines of the proof given in (\cite{Th-1}), except that we use Theorem \ref{fixedpoint} above in place of (Corollary 4.2, \cite{Th-1}).
\end{proof}
We have the following corollary to the above, with proof exactly as in (\cite{Th-1}, Cor. 5.1), we will use this in the paper for $k$ of arbitrary characteristics:
\begin{corollary}\label{cubic-cyclic} Let $A$ be a first Tits construction Albert division algebra over $k$ and $\phi\in Aut(A)$ be such that $\phi(L)=L$ for a cubic cyclic subfield $L\subset A$. Then $\phi$ is a product of three automorphisms of $A$, each stabilizing a $9$-dimensional subalgebra of $A$.
\end{corollary}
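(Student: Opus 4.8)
The plan is to reduce to Theorem \ref{cyclic-auto}, which already handles the case where $\phi$ fixes the cubic cyclic subfield $L$ pointwise, by first stripping off a single automorphism that accounts for the action of $\phi$ on $L$. Since $\phi(L)=L$ and every automorphism of $A$ fixes $k\cdot 1$, the restriction $\phi|_L$ lies in $\mathrm{Gal}(L/k)$, which is cyclic of order $3$, say generated by $\rho$. If $\phi|_L=\mathrm{id}$, then $\phi$ fixes $L$ pointwise and Theorem \ref{cyclic-auto} already writes $\phi$ as a product of two automorphisms stabilizing $9$-dimensional subalgebras, which is a fortiori a product of three (padding with the identity, which stabilizes every subalgebra). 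So the substance of the proof is the case $\phi|_L\in\{\rho,\rho^2\}$.

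In that case I would first produce an automorphism $\psi\in Aut(A)$ that stabilizes some $9$-dimensional subalgebra and satisfies $\psi|_L=\phi|_L$. Since $A$ is a first Tits construction division algebra, I would exhibit a degree $3$ central division subalgebra $D$ over $k$ with $L\subset D$ a maximal subfield and a presentation $A\cong J(D,\mu)$; then $D_+\subset A$ is the required $9$-dimensional subalgebra. By the Skolem--Noether theorem the $k$-automorphism $\phi|_L$ of $L$ extends to an inner automorphism $\mathrm{Int}(g)$ of $D$ for a suitable $g\in D^*$, i.e. $gxg^{-1}=\phi(x)$ for all $x\in L$. This $\mathrm{Int}(g)$ is an automorphism of the Jordan algebra $D_+$, and Corollary \ref{first-aut-ext} extends it to an automorphism $\psi$ of $A=J(D,\mu)$, namely $(x,y,z)\mapsto(gxg^{-1},gyh^{-1},hzg^{-1})$ with $h\in D^*$ chosen so that $N_D(h)=N_D(g)$. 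By construction $\psi$ stabilizes $D_+$ and $\psi|_L=\mathrm{Int}(g)|_L=\phi|_L$.

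Finally, the automorphism $\psi^{-1}\phi$ fixes $L$ pointwise, so Theorem \ref{cyclic-auto} presents it as a product $\phi_1\phi_2$ of two automorphisms of $A$, each stabilizing a $9$-dimensional subalgebra. Then $\phi=\psi\,\phi_1\,\phi_2$ is the desired factorization into three automorphisms, each stabilizing a $9$-dimensional subalgebra of $A$.

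The main obstacle I anticipate is the first half of the middle step: verifying that the given cyclic cubic subfield $L$ actually sits inside a first-construction nine-dimensional subalgebra $D_+$ as a maximal subfield, with a compatible presentation $A\cong J(D,\mu)$. This is where the hypothesis that $A$ is a first Tits construction (together with the fact that such $A$ is split by each of its cubic subfields) must be used; once $L$ is placed inside a suitable $D$, the Skolem--Noether extension of $\rho$ and the application of Corollary \ref{first-aut-ext} are routine, and everything else is bookkeeping with the explicit extension formulas.
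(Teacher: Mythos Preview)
Your proposal is correct and follows the same approach as the paper (which defers to \cite{Th-1}, Cor.~5.1): strip off one automorphism stabilizing a $9$-dimensional subalgebra to kill the Galois action on $L$, then apply Theorem~\ref{cyclic-auto}. The obstacle you flag---embedding the cyclic cubic subfield $L$ into some $D_+\subset A$ with $A\cong J(D,\mu)$---is exactly what \cite{PR2}, Cor.~4.5 provides for first Tits constructions, and the paper invokes this result for the same purpose in the proof of Theorem~\ref{aut-R-triv}.
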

\noindent
The following result is a generalization of Theorem 5.10 of (\cite{Th-1}), for arbitrary characteristics : 
\begin{theorem}\label{str-gate} Let $A$ be an Albert division algebra arising from the first Tits construction. Let $L\subset A$ be a cubic cyclic subfield. Then $Str(A,L)\subset C. Instr(A).H$, where $H$ denotes the subgroup of $Aut(A)$ generated by automorphisms stabilizing $9$-dimensional subalgebras of $A$ and $C$ is the group of all scalar homotheties in $Str(A)$ .
\end{theorem}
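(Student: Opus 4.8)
The plan is to control $f\in Str(A,L)$ through the restriction homomorphism $\mathrm{res}\colon Str(A,L)\to Str(L)$, $f\mapsto f|_{L}$, which is well defined because the generic norm of $A$ restricts on the cubic field $L$ to its own norm form. Since $1\in L$, the element $v:=f(1)$ lies in $L^{*}$, and $\mathrm{res}(f)$ composed with multiplication by $v^{-1}$ fixes $1$ and preserves $N_{L}$, hence is a $k$-algebra automorphism of $L$; as $L/k$ is cyclic this gives $\mathrm{res}(f)=m_{v}\,\rho^{i}$, where $m_{v}\colon x\mapsto vx$ is the multiplication operator and $\rho$ is a fixed generator of $\mathrm{Gal}(L/k)$. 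I would first record that $C\cdot Instr(A)$ is normal in $Str(A)$ (the inner structure group is normal and $C$ is central), so that $C\cdot Instr(A)\cdot H$ is a subgroup of $Str(A)$; it is therefore enough to lift each of the two factors $m_{v}$ and $\rho^{i}$, and to control $\ker(\mathrm{res})$, inside this subgroup.

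An element of $\ker(\mathrm{res})$ fixes $L$ pointwise, hence fixes $1$ and is an automorphism of $A$ fixing the cubic cyclic field $L$ pointwise; by Theorem \ref{cyclic-auto} it is a product of two automorphisms each stabilizing a $9$-dimensional subalgebra, so $\ker(\mathrm{res})\subset H$. For the Galois factor I would write $A=J(D,\mu)$ with $L\subset D$, so that cyclicity realizes $D=(L/k,\rho,b)$ with a generator $e$ satisfying $e\ell e^{-1}=\rho(\ell)$ for $\ell\in L$. By Corollary \ref{aut-ext-first} the map $(x,y,z)\mapsto(exe^{-1},eye^{-1},eze^{-1})$ is an automorphism $\psi$ of $A$ stabilizing $D_{+}$, hence $\psi\in H$, with $\psi|_{L}=\rho$. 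Replacing $f$ by $f\psi^{-i}$ I may thus assume $\mathrm{res}(f)=m_{v}$, i.e. $f(1)=v$ and $f$ acts on $L$ as multiplication by $v$.

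The crux is then to lift $m_{v}$: I must produce $g\in C\cdot Instr(A)$ that stabilizes $L$ with $g|_{L}=m_{v}$. Granting this, $g^{-1}f$ fixes $L$ pointwise and fixes $1$, so $g^{-1}f\in Aut(A,L)\subset H$ by Corollary \ref{cubic-cyclic}, whence $f=g\cdot(g^{-1}f)\in C\cdot Instr(A)\cdot H$. For $w\in L^{*}$ one has $U_{w}|_{L}=m_{w^{2}}$, so the genuine difficulty, which I expect to be the main obstacle, is the quotient $L^{*}/k^{*}(L^{*})^{2}$: the inner operators that manifestly stabilize $L$, namely the $U_{w}$ with $w\in L^{*}$ and the scalars, restrict to $L$ only as multiplications by elements of $k^{*}(L^{*})^{2}$ and cannot by themselves leave this class. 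The plan to overcome this is to bring in $U$-operators of elements of $A$ having nonzero second and third Tits components: the grading $A=D\oplus D\oplus D$ of the first construction should supply norm similarities that stabilize $L$ and restrict to $m_{v}$ for an arbitrary $v\in L^{*}$, the residual multiplier being corrected by a scalar homothety in $C$ (here one uses $(k^{*})^{2}(k^{*})^{3}=k^{*}$). Because the characteristic is arbitrary, no appeal to density of squares or to transitivity on $\overline{k}$-points is available, so this lift has to be written out explicitly from the cubic norm structure identities.

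Assembling the pieces, a general $f\in Str(A,L)$ factors, after absorbing $\psi^{i}\in H$ and the kernel contribution, as an element of $C\cdot Instr(A)$ times an element of $Aut(A,L)\subset H$. Since $C\cdot Instr(A)\cdot H$ is a subgroup of $Str(A)$, the asserted inclusion $Str(A,L)\subset C\cdot Instr(A)\cdot H$ follows. The only input beyond the structural results \ref{aut-ext-first}, \ref{cyclic-auto} and \ref{cubic-cyclic} is the explicit multiplication lift of the previous paragraph.
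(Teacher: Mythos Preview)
Your plan is correct and is essentially the approach of \cite{Th-1}, Theorem~5.10, to which the paper defers; your preliminary separation of the Galois factor is a harmless extra step, since the paper simply lands in $Aut(A,L)$ and invokes Corollary~\ref{cubic-cyclic} directly. The explicit lift you flag as the crux is Lemma~5.1 of \cite{Th-1} (restated in this paper just before Theorem~\ref{Main}): embed $L$ in $S=J(L,\lambda)\subset A$ via \cite{PR2}, Cor.~4.5, and for $a\in L^{*}$ take $\chi=R_{N_L(a)}\,U_{(0,0,1)}\,U_{(0,\,N_L(a)^{-1}a,\,0)}\in C\cdot Instr(A)$, which stabilizes $L$ with $\chi|_L=m_{a^{-1}}$, exactly the $U$-operators ``with nonzero second and third Tits components'' you anticipated.
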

\begin{proof} The proof goes along the exact lines of the proof of Thm 5.10 in (\cite{Th-1}), except that we use Corollary \ref{cubic-cyclic} above instead of (Cor.5.1, \cite{Th-1}).
\end{proof}
\noindent
{\bf Extending norm similarities :} Let $A$ be an Albert (division) algebra and $S$ a $9$-dimensional subalgebra of $A$. 
Given an element $\psi\in Str(S)$, we wish to extend it to an element of $Str(A)$. Though this can be achieved using a recent result of Garibaldi-Petersson (\cite{GP}, Prop. 7.2.4), we need certain explicit extension for the purpose of proving $R$-triviality of the algebraic groups described in the introduction, which we now proceed to describe.

We have $S\cong(B,\sigma)_+$ for a suitable degree $3$ central division algebra $B$ over a quadratic extension $K$ of $k$ with an involution $\sigma$ of the second kind. Therefore we have $A\cong J(B,\sigma,u,\mu)$ for suitable parameters. For our purpose, by conjugating with a suitable automprphism of $A$, we may assume $S=(B,\sigma)_+,~A=J(B,\sigma,u,\mu)$ and $\psi\in Str((B,\sigma)_+)$. Let $b_0=\psi(1)$ and $\lambda=N_B(b_0)$. 
By (\cite{J3}, Chap.V, Thm. 5.12.10), $Str((B,\sigma)_+)$ consists of the maps 
$$x\mapsto \lambda ax\sigma(a),~x\in (B,\sigma)_+, a\in B^*, \lambda\in k^*.$$
Hence there exists $\gamma\in k^*$ and $g\in B^*$ such that 
$$\psi(x)=\gamma g x\sigma(g),~\forall x\in S.$$
Let $\nu:=N_B(g)\in K=Z(B)$ and let 
$$\delta=N_B(\sigma_u(g)^{-1}g)=\overline{N_B(g)}^{-1}N_B(g)=\bar{\nu}^{-1}\nu,$$ 
where $\sigma_u=Int(u)\circ\sigma$. Then $\delta\bar{\delta}=1$. By Lemma \ref{unitary}, there exists $q\in U(B,\sigma_u)$ such that $\delta=N_B(q)$. Define $\widetilde{\psi}:A\rightarrow A$ by setting
$$~~~~~~~~~~~~~~~~~~~~~~~~~~~~~~~~~~\widetilde{\psi}((b,x))=\gamma(gb\sigma(g),\sigma(g)^{\#}xq).~~~~~~~~~~~~~~~~~~~~~~~~~~~~~~~~~~(*)$$
Clearly $\widetilde{\psi}((b,0))=(\psi(b),0)$. Hence $\widetilde{\psi}$ is an extension of $\psi$ to $A$ and is clearly a $k$-linear bijective endomorphism of $A$. We now compute
$$N(\widetilde{\psi}((b,x))) = \gamma^3N((gb\sigma(g),\sigma(g)^{\#}xq))~~~~~~~~~~~~~~~~~~~~~~~~~~~~~~~~~~~~~~~~~~~~~~~~~~~~~~~~~~~~~~~~~~~~~~~~~~~~~$$
$$~~~~~~~~= \gamma^3[N_B(g)\overline{N_B(g)}N_B(b)+T_K(\mu \overline{N_B(g)}^2N_B(x)N_B(q))~~~~~~~~~~~~~~~~~~~$$
$$-T_B(gb\sigma(b)\sigma(g)^{\#}xqu\sigma(q)\sigma(x)g^{\#})].$$
Now substituting $N(q)=\overline{N_B(g)}^{-1}N_B(g)$, $\nu=N_B(g)=gg^{\#}$ and $qu=u\sigma(q)^{-1}$ using $q\sigma_u(q)=1=qu\sigma(q)u^{-1}$ in the RHS of the above equation and simplifying, we get 
$$N(\widetilde{\psi}((b,x)))=\gamma^3\nu\bar{\nu}N((b,x)).$$
Therefore $\widetilde{\psi}\in Str(A,S)$. We have proved
\begin{theorem}\label{ext-norm-sim} Let $A$ be an Albert division algebra over $k$ and $S\subset A$ be a $9$-dimensional subalgebra. Then every element $\psi\in Str(S)$ admits an extension $\widetilde{\psi}\in Str(A)$. Let 
$S=(B,\sigma)_+$ and $A=J(B,\sigma,u,\mu)$ for suitable parameters. Let $\psi\in Str(S)$ be given by $b\mapsto\gamma gb\sigma(g)$ for $b\in (B,\sigma)_+,~g\in B^*$ and $\gamma\in k^*$. Then for any $q\in U(B,\sigma_u)$ with $N_B(q)=N_B(\sigma(g)^{-1}g)$, the map $\widetilde{\psi}$ given by 
$$\widetilde{\psi}((b,x))=\gamma(gb\sigma(g),\sigma(g)^{\#}xq),$$
is a norm similarity of $A$ extending $\psi$. 
\end{theorem}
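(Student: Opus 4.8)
The plan is to verify directly the three defining properties that make $\widetilde{\psi}$ a norm similarity extending $\psi$: that $\widetilde{\psi}$ restricts to $\psi$ on the first summand, that $\widetilde{\psi}$ is a $k$-linear bijection of $A$, and that $\widetilde{\psi}$ scales the norm $N$ by a factor lying in $k^*$. The first property is immediate: since $\widetilde{\psi}((b,0))=\gamma(gb\sigma(g),0)$, the map restricts on $S=(B,\sigma)_+$ to $b\mapsto \gamma gb\sigma(g)=\psi(b)$, where one first checks that $gb\sigma(g)$ is again $\sigma$-symmetric (because $\sigma(gb\sigma(g))=g\sigma(b)\sigma(g)=gb\sigma(g)$ for $b\in(B,\sigma)_+$), so the first coordinate indeed lies in $(B,\sigma)_+$. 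Linearity is clear from the formula, and bijectivity follows because $g$, $\sigma(g)^{\#}=N_B(\sigma(g))\sigma(g)^{-1}$ and $q$ are all invertible while $\gamma\in k^*$, so each of the two coordinate maps is invertible.

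Before the computation I would record that the hypothesis is not vacuous. Writing $\nu:=N_B(g)$ and $\delta:=N_B(\sigma(g)^{-1}g)=\bar\nu^{-1}\nu$, one has $\delta\bar\delta=1$, so Lemma~\ref{unitary}, applied to the second-kind involution $\sigma_u$, furnishes a $q\in U(B,\sigma_u)$ with $N_B(q)=\delta$.

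The substance is the norm computation. Using the cubic norm $N((b,x))=N_B(b)+T_K(\mu N_B(x))-T_B(bxu\sigma(x))$ on $A=J(B,\sigma,u,\mu)$ together with the homogeneity $N(\gamma v)=\gamma^3N(v)$, I would evaluate $N(\widetilde{\psi}((b,x)))$ term by term, aiming to factor out a common scalar $\gamma^3\nu\bar\nu\in k^*$. The identities I would invoke are multiplicativity of the reduced norm with $N_B(\sigma(g))=\overline{N_B(g)}=\bar\nu$ and $N_B(w^{\#})=N_B(w)^2$; the adjoint relations $ww^{\#}=w^{\#}w=N_B(w)$ and $\sigma(w^{\#})=\sigma(w)^{\#}$; the unitarity relation $qu\sigma(q)=u$ coming from $q\in U(B,\sigma_u)$; and the prescribed value $N_B(q)=\bar\nu^{-1}\nu$. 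Concretely, the first coordinate contributes $N_B(gb\sigma(g))=\nu\bar\nu\,N_B(b)$; in the reduced-norm term $N_B(\sigma(g)^{\#}xq)=\bar\nu^2 N_B(x)\,\bar\nu^{-1}\nu=\nu\bar\nu\,N_B(x)$, and since $\nu\bar\nu\in k$ it pulls out of $T_K$; and in the trace term the relations $\sigma(g)\sigma(g)^{\#}=\bar\nu$ and $qu\sigma(q)=u$ collapse the product to $\bar\nu\,T_B\!\big(g\,(bxu\sigma(x))\,g^{\#}\big)$, which by cyclicity of $T_B$ and $g^{\#}g=\nu$ equals $\nu\bar\nu\,T_B(bxu\sigma(x))$. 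Assembling the three pieces gives $N(\widetilde{\psi}((b,x)))=\gamma^3\nu\bar\nu\,N((b,x))$, and since $\nu\bar\nu$ is fixed by the nontrivial $k$-automorphism of $K$, the factor $\gamma^3\nu\bar\nu$ lies in $k^*$; hence $\widetilde{\psi}\in Str(A)$ extends $\psi$.

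I expect the only genuine obstacle to be the trace term $T_B(\cdots)$: one must move $\sigma$ correctly past the adjoint via $\sigma(w^{\#})=\sigma(w)^{\#}$ and then exploit the unitarity of $q$ for $\sigma_u$ at precisely the right moment, after which cyclicity of $T_B$ finishes the reduction. The normalization $N_B(q)=\bar\nu^{-1}\nu$ is exactly what forces the reduced-norm term to scale by $\nu\bar\nu$ as well, and checking that this same $q$ simultaneously makes the trace term collapse correctly is the point at which the hypotheses must be used in a coordinated way.
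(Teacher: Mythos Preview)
Your proposal is correct and follows essentially the same approach as the paper's own proof: verify that $\widetilde{\psi}$ is a $k$-linear bijection extending $\psi$, note via Lemma~\ref{unitary} that a suitable $q$ exists, and then carry out the direct norm computation term by term, using $\sigma(g)\sigma(g)^{\#}=\bar\nu$, $qu\sigma(q)=u$, $N_B(q)=\bar\nu^{-1}\nu$ and cyclicity of $T_B$ to extract the common factor $\gamma^3\nu\bar\nu$. The paper presents exactly this computation (in the discussion immediately preceding the theorem statement), with the same identities invoked at the same places; your write-up simply spells out the collapse of the trace term in slightly more detail.
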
 
\begin{corollary} Let $A=J(D,\mu)$ be a first construction Albert algebra. Let $\psi\in Str(D_+)$ be given by $x\mapsto \gamma axb$ for $\gamma\in k^*$ and $a,b\in D^*$. Then the map $\widetilde{\psi}:A\rightarrow A$ given by 
$$(x,y,z)\mapsto \gamma(axb, b^{\#}yc,c^{-1}za^{\#}),$$
where $b,c\in D^*$ are such that $N_D(a)=N_D(b)N_D(c)$, is a norm similarity of $A$ extending $\psi$.
\end{corollary}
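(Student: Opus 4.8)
The plan is to deduce this from Theorem \ref{ext-norm-sim} by realizing the first construction $A = J(D,\mu)$ as a second construction, exactly in the manner of the proof of Corollary \ref{aut-ext-first}. Concretely, I would identify $A = J(D,\mu)$ with $J(B,\sigma,u,\mu)$, where $B = D \times D^{\circ}$, $\sigma = \epsilon$ is the switch involution, $u = 1$, and the scalar $\mu$ is replaced by $(\mu,\mu^{-1}) \in K = k\times k = Z(B)$. Under this identification the nine-dimensional subalgebra $S = D_+$ becomes $(B,\sigma)_+$, embedded via $d \mapsto (d,d)$, while the three $D$-summands of $J(D,\mu)$ correspond to $(B,\sigma)_+$ (first summand) and to $B = D \times D^{\circ}$ (second summand, accounting for the last two copies of $D$). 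Since $u = 1$, here $\sigma_u = \epsilon$.

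First I would put $\psi$ in the shape required by Theorem \ref{ext-norm-sim}. Writing $g = (a,b) \in B^*$, so that $\sigma(g) = (b,a)$, a direct computation in $B = D \times D^{\circ}$ (remembering that the second factor carries the opposite multiplication) gives $g(d,d)\sigma(g) = (adb, adb)$ for $d \in D$; hence the map $x \mapsto \gamma a x b$ on $D_+$ is precisely $x \mapsto \gamma g x \sigma(g)$ on $(B,\sigma)_+$, which matches the hypothesis of Theorem \ref{ext-norm-sim}. The theorem then produces the extension $\widetilde{\psi}((\beta,\xi)) = \gamma(g\beta\sigma(g),\ \sigma(g)^{\#} \xi q)$, where $q \in U(B,\epsilon)$ is any element with $N_B(q) = N_B(\sigma(g)^{-1}g)$.

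The remaining work is to transcribe this extension into the $D \oplus D \oplus D$ coordinates of $J(D,\mu)$. I would record that $U(B,\epsilon) = \{(c,c^{-1}) : c \in D^*\}$ and that $N_B$ on $B = D\times D^{\circ}$ is $(d_1,d_2) \mapsto (N_D(d_1), N_D(d_2)) \in K$; since $N_B(\sigma(g)^{-1}g) = (N_D(a)N_D(b)^{-1},\ N_D(b)N_D(a)^{-1})$, the requirement $N_B(q) = N_B(\sigma(g)^{-1}g)$ for $q = (c,c^{-1})$ is exactly $N_D(c) = N_D(a)N_D(b)^{-1}$, i.e. $N_D(a) = N_D(b)N_D(c)$, which is the condition in the statement. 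Using that the adjoint on $D^{\circ}$ agrees with that on $D$ (so $\sigma(g)^{\#} = (b^{\#}, a^{\#})$) and evaluating $\sigma(g)^{\#}\xi q$ for $\xi = (y,z) \in D \times D^{\circ}$ yields $(b^{\#} y c,\ c^{-1} z a^{\#})$, while the first component is $\gamma a x b$ as above; assembling these recovers the asserted formula $(x,y,z) \mapsto \gamma(axb,\ b^{\#} y c,\ c^{-1}za^{\#})$.

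The only genuine obstacle is bookkeeping: every product must be carried out in $D \times D^{\circ}$, where the opposite multiplication reverses the order of factors in the second slot, and one must invoke $d d^{\#} = d^{\#} d = N_D(d)$ together with the fact that inversion and the adjoint on $D^{\circ}$ coincide with those on $D$. Once these conventions are fixed, no new argument is needed: $\widetilde{\psi}$ is automatically a norm similarity extending $\psi$ because Theorem \ref{ext-norm-sim} guarantees it, so the corollary reduces to checking that its explicit formula translates into the displayed one.
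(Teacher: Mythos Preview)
Your proposal is correct and follows exactly the approach the paper intends: the paper states this corollary without proof immediately after Theorem \ref{ext-norm-sim}, relying (as in the parallel Corollary \ref{first-aut-ext}) on the identification of $J(D,\mu)$ with $J(D\times D^{\circ},\epsilon,1,(\mu,\mu^{-1}))$ and a direct translation of the formula. Your bookkeeping with the opposite multiplication, the description of $U(B,\epsilon)$, and the computation of $\sigma(g)^{\#}\xi q$ are all accurate.
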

\noindent
Now, let $\eta\in Str(A,S)$ be arbitrary and let $\phi:=\eta|S$. We extend $\phi$ to an element $\widetilde{\phi}\in Str(A)$ according to $(*)$. Then we have 
$$\widetilde{\phi}^{-1}\circ\eta\in Str(A/S)=Aut(A/S)\cong SU(B,\sigma_u).$$
Hence there exists $q'\in SU(B,\sigma_u)$ such that 
$$\widetilde{\phi}^{-1}\circ\eta((b,x))=(b,xq')~\forall b\in (B,\sigma)_+,~x\in B.$$ 
Write $\theta_{q'}$ for the automorphism $(b,x)\mapsto (b,xq')$. Then $\eta=\widetilde{\phi}\circ\theta_{q'}$. Also, since $\eta((B,\sigma)_+)=(B,\sigma)_+$, there exists $\gamma\in k^*$ and $g\in B^*$, such that 
$$\eta((b,0))=\gamma(gb\sigma(g),0)~\forall b\in (B,\sigma)_+.$$
Therefore, we have 
$$\eta((b,x))=\gamma(gb\sigma(g),\sigma(g)^{\#}xq_0)$$
for all $(b,x)\in A$ and some $q_0\in U(B,\sigma_u)$ with $N(q_0)=N_B(\sigma(g)^{-1}g)$. We have therefore associated a triple $(\gamma, (g, q_0))\in k^*\times B^*\times U(B,\sigma_u)$ to $\eta\in Str(A,S)$ with $\gamma\in k^*$ and $g\in B^*$ arbtrary and $q_0\in U(B,\sigma_u)$ with $N_B(q_0)=N_B(\sigma(g)^{-1}g)$. 

Conversely, for any triple $(\gamma,(g,q_0))\in k^*\times B^*\times U(B,\sigma_u)$ with $\gamma\in k^*,~g\in B^*$ arbitrary and $q_0\in U(B,\sigma_u)$ such that $N_B(q_0)=N_B(\sigma(g)^{-1}g)$, we define $\eta:A\rightarrow A$ by the formula $(*)$ :
$$\eta((b,x))=\gamma(gb\sigma(g), \sigma(g)^{\#}xq_0).$$
By a straight forward calculation exactly as above, it follows that 
$$N(\eta((b,x)))=\gamma^3\nu\bar{\nu}N((b,x)).$$
Therefore $\eta\in Str(A,S)$. Hence we have proved
\begin{theorem}\label{Str-ABsigma+}
Let $A=J(B,\sigma,u,\mu)$ be an Albert division algebra, written as a second Tits construction and $S=(B,\sigma)_+$. The group $Str(A,S)$ consists of the maps 
$(b,x)\mapsto\gamma(gb\sigma(b),\sigma(g)^{\#}xq)$ where $\gamma\in k^*, g\in B^*$ are arbitrary and $q\in U(B,\sigma_u)$ satisfies $N_B(q)=N_B(\sigma(g)^{-1}g)$. 
We have,
$$Str(A,(B,\sigma)_+)\cong \frac{k^*\times H_0}{K^*},$$
where $H_0=\{(g,q)\in B^*\times U(B,\sigma_u)|N_B(q)N_B(\sigma(g)^{-1}g)=1\}$ and $K^*\hookrightarrow k^*\times H_0$ via $\alpha\mapsto (N_K(\alpha)^{-1},\alpha,\overline{\alpha}^{-1}\alpha)$.
\end{theorem}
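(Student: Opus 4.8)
The plan is to promote the parametrization of $Str(A,S)$ obtained in the discussion above into an isomorphism of groups and then to identify its kernel. By that discussion every $\eta\in Str(A,S)$ has the form $(b,x)\mapsto\gamma(gb\sigma(g),\sigma(g)^{\#}xq)$ for a triple $(\gamma,g,q)$ with $\gamma\in k^{*}$ and $g\in B^{*}$ arbitrary and $q\in U(B,\sigma_u)$ constrained by a reduced-norm condition, and conversely the norm computation shows every admissible triple gives such an $\eta$. So the only thing left is to organise this bijective correspondence group-theoretically.

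First I would define $\Phi\colon k^{*}\times H_0\to Str(A,S)$ sending a triple to the associated norm similarity, with the inverse placed on the $U(B,\sigma_u)$-factor so that the reduced-norm condition matches the defining condition of $H_0$. Surjectivity of $\Phi$ is exactly the form result just recalled. To see $\Phi$ is a homomorphism I would compose two such maps: in the first coordinate the scalars multiply and, since $\sigma$ is an anti-automorphism, $\sigma(g_1)\sigma(g_2)=\sigma(g_2g_1)$, so the $g$'s multiply; in the second coordinate the identity $(xy)^{\#}=y^{\#}x^{\#}$ gives $\sigma(g_1)^{\#}\sigma(g_2)^{\#}=\sigma(g_2g_1)^{\#}$ and the $U$-factors multiply. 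Well-definedness (that the target triple again lies in $H_0$) follows from multiplicativity of the reduced norm together with $N_B(\sigma(g))=\overline{N_B(g)}$; here Lemma \ref{unitary} is what guarantees that a $U(B,\sigma_u)$-factor with the prescribed norm actually exists.

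The heart of the proof is the kernel computation. If $\Phi(\gamma,g,q)=\mathrm{id}$, then restricting to $S$ forces $b\mapsto\gamma gb\sigma(g)$ to be the identity of $(B,\sigma)_+$. Putting $b=1$ gives $g\sigma(g)=\gamma^{-1}\in k^{*}$, and cancelling $g$ on the left then shows $\sigma(g)$ commutes with every symmetric element. Since the symmetric elements span $B$ over $K$, this forces $\sigma(g)\in Z(B)=K$, hence $g=\alpha\in K^{*}$, and consequently $\gamma=N_K(\alpha)^{-1}$. Feeding this back into the vanishing of the second coordinate, and using $N_B(\alpha)=\alpha^{3}$ and $\alpha^{\#}=\alpha^{2}$ for a central $\alpha$, determines $q$ as the stated function of $\alpha$. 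This identifies $\ker\Phi$ with the image of $K^{*}$ under $\alpha\mapsto(N_K(\alpha)^{-1},\alpha,\overline{\alpha}^{-1}\alpha)$ and yields $Str(A,(B,\sigma)_+)\cong(k^{*}\times H_0)/K^{*}$.

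I expect the main obstacle to be essentially organisational: keeping the $\sigma$-, $\#$- and inverse-conventions consistent so that $\Phi$ is literally multiplicative onto $H_0$ with componentwise multiplication (rather than an anti-homomorphism), and so that the well-definedness norm condition and the kernel embedding come out with matching exponents and inverses. The one genuinely non-formal step is the centrality of $g$ extracted from triviality of the restriction to $S$, which rests on the symmetric elements generating $B$ over $K$; everything else reduces to multiplicativity of the reduced norm and the adjoint identity $(xy)^{\#}=y^{\#}x^{\#}$.
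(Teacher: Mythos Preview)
Your proposal is correct and follows the same overall architecture as the paper: define the surjection from $k^{*}\times H_0$ using the description of $Str(A,S)$ established in the preceding discussion, then compute the kernel. You are in fact more explicit than the paper in verifying that the map is a group homomorphism (the paper simply asserts this).

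The one place where your argument genuinely diverges is the kernel computation. You work entirely in the first coordinate: from $\gamma gb\sigma(g)=b$ for all $b\in(B,\sigma)_+$ and $g\sigma(g)=\gamma^{-1}$ you cancel $g$ to get that $\sigma(g)$ commutes with every symmetric element, and then invoke the fact that $(B,\sigma)_+$ spans $B$ over $K$ to force $g\in K^{*}$. The paper instead evaluates at $b=1$ \emph{and} $x=1$, manipulates the resulting identities to express $g$ and $\sigma(g)^{\#}$ in terms of $q$, substitutes back into the full equation to obtain $(qbq^{-1},qxq^{-1})=(b,x)$, and concludes $q\in K^{(1)}$ (hence $g\in K^{*}$) from centrality of $q$ in $B$. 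Your route is shorter and more conceptual but relies on the spanning fact, which is immediate in characteristic $\neq 2$ and needs a one-line check in characteristic $2$ (pick $\lambda\in K\setminus k$ and observe $\lambda\cdot(B,\sigma)_+\cap(B,\sigma)_+=0$, so $K\cdot(B,\sigma)_+=B$ by dimension count); the paper's route is purely computational and self-contained. Either way the identification of the kernel with the image of $K^{*}$ under $\alpha\mapsto(N_K(\alpha)^{-1},\alpha,\overline{\alpha}\alpha^{-1})$ drops out.
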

\begin{proof} Only the last assertion needs to be proved. Let 
$$H_0=\{(g,q)\in B^*\times U(B,\sigma_u)|N_B(q)N_B(\sigma(g)^{-1}g)=1\}.$$ 
We define 
$f:k^*\times H_0\rightarrow Str(A,(B,\sigma)_+)$ by 
$$(\gamma,(g,q))\mapsto [(b,x)\mapsto \gamma.(gb\sigma(g),\sigma(g)^{\#}xq^{-1})].$$
Then, by our calculations before, $f$ is a surjective homomorphism. We compute $ker(f)$. Let $(\gamma,(g,q))\in Ker(f)$. Then 
$$~~~~~~~~~~~~~~~~~~~~~~~~~\gamma(gb\sigma(g),\sigma(g)^{\#}xq^{-1})=(b,x)~\forall b\in (B,\sigma)_+,~x\in B.~~~~~~~~~~~~~~~~~~~~~(**)$$
Taking $b=1,~x=1$ in the above relation gives 
$$~~~~~~~~~~~~~~~~~~~~~~~~~~~~~~~~~~~~\gamma g\sigma(g)=1,~\gamma\sigma(g)^{\#}q^{-1}=1.~~~~~~~~~~~~~~~~~~~~~~~~~~~~~~~~~~~~~~~~~~~(***)$$
Now $g\sigma(g)=\gamma^{-1}$ gives $N_B(g)\overline{N_B(g)}=\gamma^{-3}$ and $\sigma(g)^{\#}=\gamma^{-2}(g^{\#})^{-1}$. Also $gg^{\#}=N_B(g)$ gives $(g^{\#})^{-1}=N_B(g)^{-1}g$. Therefore, we get from the above equation 
$$\gamma\sigma(g)^{\#}=\gamma(\gamma^{-2}(g^{\#})^{-1})=\gamma^{-1}N_B(g)^{-1}g=q,$$
and thus $g=\gamma N_B(g)q$ and $\sigma(g)^{\#}=\gamma^{-1}q$. Substituting this in $(**)$ we get
$$\gamma(\gamma N_B(g)qb\sigma(q)\overline{N_B(g)}\gamma, \gamma^{-1}qxq^{-1})=(b,x)~\forall b\in (B,\sigma)_+,~x\in B.$$
This gives, using $N_B(g)\overline{N_B(g)}=\gamma^{-3}$, 
$$(qbq^{-1}, qxq^{-1})=(b,x)~\forall b\in (B,\sigma)_+,~x\in B.$$
Hence $q\in K^*\cap U(B,\sigma_u)=K^{(1)}$. Thus $g=\gamma N_B(g)q\in K^*$ and, by $(***)$, $N_K(g)=g\sigma(g)=\gamma^{-1}$. Therefore, 
$$q=\gamma^{-1}N_B(g)^{-1}g=\gamma^{-1}g^{-3}g=\gamma^{-1}g^{-2}=\sigma(g)gg^{-2}=\sigma(g)g^{-1}.$$
Hence $(\gamma,g,q)=(N_K(g)^{-1},g, \sigma(g)g^{-1})$. This shows 
$$ker(f)=\{(N_K(\alpha)^{-1},\alpha,\overline{\alpha}^{-1}\alpha)|\alpha\in K^*\},$$
and the proof is complete.
\end{proof}
\begin{corollary} Let $A=J(D,\mu)$ be a first construction Albert algebra. Then the group $Str(A,D_+)$ consists of the maps 
$$(x,y,z)\mapsto \gamma(axb,b^{\#}yc, c^{-1}za^{\#}),$$
where $a,b,c\in D^*,~\gamma\in k^*$ and $N_D(a)=N_D(b)N_D(c)$.
\end{corollary}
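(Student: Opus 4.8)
The plan is to deduce this corollary from Theorem~\ref{Str-ABsigma+} by specializing the second Tits construction to the split case $K=k\times k$, exactly as Corollaries~\ref{first-aut-ext} and~\ref{aut-ext-first} were obtained from their second-construction counterparts. First I would identify $A=J(D,\mu)$ with the second construction $J(B,\sigma,1,(\mu,\mu^{-1}))$, where $B=D\times D^{\circ}$, $\sigma=\epsilon$ is the switch involution and $Z(B)=K=k\times k$. Under this identification $D_+\subset A$ corresponds to $(B,\sigma)_+=\{(d,d)\mid d\in D\}$, i.e.\ to the first summand of $J(D,\mu)$, while the two remaining summands are identified with $B\cong D\oplus D$; thus the generic element $(x,y,z)\in A$ corresponds to the pair $\big((x,x),(y,z)\big)\in (B,\sigma)_+\oplus B$.

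Next I would record the ingredients of Theorem~\ref{Str-ABsigma+} in this split algebra. An element $g\in B^*$ is a pair $g=(a,b)$ with $a,b\in D^*$; the reduced norm is $N_B(a,b)=(N_D(a),N_D(b))\in k\times k$; the adjoint is componentwise, $(d_1,d_2)^{\#}=(d_1^{\#},d_2^{\#})$; and since $u=1$ we have $\sigma_u=\sigma=\epsilon$, whence $U(B,\sigma_u)=\{(c,c^{-1})\mid c\in D^*\}\cong D^*$. The switch also gives $\sigma(g)=(b,a)$ and $\sigma(g)^{\#}=(b^{\#},a^{\#})$.

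I would then substitute $g=(a,b)$, $q=(c,c^{-1})$, and the generic element $\big((x,x),(y,z)\big)$ into the defining formula of Theorem~\ref{Str-ABsigma+}, namely $(\beta,\xi)\mapsto\gamma\big(g\beta\sigma(g),\,\sigma(g)^{\#}\xi q\big)$, expanding componentwise and remembering that the second slot carries the opposite multiplication of $D^{\circ}$. On the first summand one finds $g(x,x)\sigma(g)=(axb,axb)$, corresponding to $axb\in D$; on the $B$-summand one finds $\sigma(g)^{\#}(y,z)q=(b^{\#}yc,\,c^{-1}za^{\#})$. This reproduces precisely the asserted map $(x,y,z)\mapsto\gamma(axb,\,b^{\#}yc,\,c^{-1}za^{\#})$.

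Finally I would translate the norm condition $N_B(q)=N_B(\sigma(g)^{-1}g)$ of the theorem. A short computation gives $\sigma(g)^{-1}g=(b^{-1}a,\,ba^{-1})$, so that $N_B(\sigma(g)^{-1}g)=(N_D(b)^{-1}N_D(a),\,N_D(b)N_D(a)^{-1})$, while $N_B(q)=(N_D(c),\,N_D(c)^{-1})$. Equating first components yields $N_D(a)=N_D(b)N_D(c)$, the second components then being automatically reciprocal. The only point requiring care, and the sole (minor) obstacle, is the bookkeeping of the opposite multiplication in the $D^{\circ}$-factor together with the verification that the chosen identification of the summands of $J(D,\mu)$ with those of $(B,\sigma)_+\oplus B$ is compatible with the norms and adjoints of the two constructions; once this is pinned down the translation of Theorem~\ref{Str-ABsigma+} is purely formal.
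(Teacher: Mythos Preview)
Your proposal is correct and follows exactly the approach the paper intends: the corollary is stated without proof, but the pattern established in Corollaries~\ref{first-aut-ext}, \ref{aut-ext-first}, and the corollary after Theorem~\ref{ext-norm-sim} makes clear that one is meant to identify $J(D,\mu)$ with $J(D\times D^{\circ},\epsilon,1,(\mu,\mu^{-1}))$ and specialize Theorem~\ref{Str-ABsigma+}. Your bookkeeping of the opposite multiplication in $D^{\circ}$, the identification $U(B,\epsilon)\cong D^*$ via $c\mapsto(c,c^{-1})$, and the translation of the norm condition are all accurate.
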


\section{\bf $R$-triviality results} In this section, we proceed to develop our results on $R$-triviality of various groups and also prove our main theorem. 
We need a version of a theorem of Yanchevskii (\cite{KMRT}, 17.25) for degree $3$ division algebras in arbitrary characteristics, which we prove now:
\begin{proposition}\label{yanchevskii} Let $(B,\sigma)$ be a degree $3$ central division algebra over its center $K$, a quadratic \'{e}tale extension of $k$, with $\sigma$ an involution of the second kind. Then every nonzero element $g\in B$ admits a factorization $g=zs$ with $s\in (B,\sigma)_+$ and $z\in B^*$ is such that $z\sigma(z)=\sigma(z)z$.
\end{proposition}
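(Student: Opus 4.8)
The plan is to reduce the statement to producing, over the infinite field $k$, a single \emph{normal} element inside an explicit $k$-form of $B$, and then to solve the resulting quadratic condition by hand.

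First I would dispose of the easy cases. If $g\in K$ it is central, so $g\sigma(g)=\sigma(g)g$ and $g=g\cdot 1$ already has the desired shape. If $g\notin K$, then $L:=K(g)$ is a cubic subfield of $B$; and if $\sigma(L)=L$, then $\sigma(g)\in L$ commutes with $g$, so again $g$ is itself normal and $g=g\cdot 1$ works. Thus the only genuine case is $\sigma(L)\neq L$, where $L$ and $\sigma(L)=K(\sigma(g))$ are two distinct maximal subfields of $B$ and hence $L\cap\sigma(L)=K$. Writing $(B,\sigma)_+$ for the symmetric elements, a factorization $g=zs$ with $s\in(B,\sigma)_+^{*}$ is the same as $z=gt$ with $t:=s^{-1}\in(B,\sigma)_+^{*}$, so the whole assertion is equivalent to the existence of a nonzero symmetric $t$ for which $z:=gt$ is normal. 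Since $\sigma(gt)=t\sigma(g)$, normality of $gt$ unwinds to the single symmetric equation
$$gt^{2}\sigma(g)=t\,\sigma(g)g\,t .$$
Both sides are symmetric and have the same reduced norm $N_B(t)^{2}N_B(g)\overline{N_B(g)}$ for every $t$, so the equation is degenerate and its solution locus in the $9$-dimensional space $(B,\sigma)_+$ is a cone.

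To organize the search I would introduce the $K$-conjugate-linear map $\phi(x):=g\sigma(x)\sigma(g)^{-1}$. A direct computation gives $\phi^{2}=\mathrm{id}$, so $\phi$ is a semilinear involution of the $K$-vector space $B$; since $B$ is a division algebra its center $K$ is a field, and $K/k$ is a separable quadratic extension because $K$ is étale over $k$, so Galois descent shows that $F:=\{z\in B\mid \phi(z)=z\}=\{z\mid z\sigma(g)=g\sigma(z)\}$ is a $9$-dimensional $k$-subspace with $F\otimes_k K=B$. Every nonzero element of $F$ is invertible (as $B$ is a division algebra), one checks $F=g\cdot(B,\sigma)_+$, and for $z\in F$ one has $\sigma(z)=g^{-1}z\sigma(g)$. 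Hence for every nonzero $z\in F$ the element $s:=z^{-1}g$ is symmetric, and the whole problem becomes: \emph{find a nonzero $z\in F$ that is normal}. Substituting $\sigma(z)=g^{-1}z\sigma(g)$ into $z\sigma(z)=\sigma(z)z$ turns this into the single homogeneous quadratic equation
$$gzg^{-1}z\sigma(g)=z\sigma(g)z$$
on the $9$-dimensional $k$-space $F\cong\mathbb{A}^{9}_{k}$, cutting out a cone $Q$ defined over $k$.

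I expect the main obstacle to be precisely the production of a nonzero $k$-rational point of $Q$ in arbitrary characteristic. The degeneracy noted above makes a dimension count plausible ($Q$ should be positive-dimensional), but over a general infinite field this does not by itself furnish a nontrivial $k$-point, so one must use the arithmetic of the situation rather than mere dimension. In characteristic $\neq 2$ this is the classical Yanchevskii argument behind (\cite{KMRT}, 17.25), which diagonalizes the relevant hermitian data and extracts a square root; those tools fail in characteristic $2$. The characteristic-free replacement I would pursue exploits the degree-$3$ structure: restrict attention to the $\sigma$-stable cubic subfield $K(h)$ attached to the symmetric element $h:=\sigma(g)g$, work inside its fixed cubic field $E=K(h)^{\sigma}$ over $k$, use $L\cap\sigma(L)=K$ to reduce the displayed symmetric equation to a norm/conjugacy equation inside this cubic field, and solve it there, the hypothesis that $k$ is infinite being used to avoid the finitely many degenerate values of the parameters. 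Carrying out this final step cleanly, with no appeal to $\tfrac12$, is the heart of the proof.
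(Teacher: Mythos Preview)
Your reduction is correct and well organized: the identification $F=g\cdot(B,\sigma)_+$ as a $k$-form of $B$, and the reformulation of the problem as ``find a nonzero $\sigma$-normal element in $F$'', are exactly right. But the proof is not finished. Your last paragraph outlines a program (pass to the cubic field generated by $h=\sigma(g)g$, reduce the quadratic condition to a norm equation there, etc.) without carrying it out, and you yourself flag this step as ``the heart of the proof''. As written there is no argument producing the required $k$-point of your cone $Q$.

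The paper's proof bypasses the quadratic equation entirely via a linear-algebra observation you are missing: the $10$-dimensional $k$-subspace $K+(B,\sigma)_+\subset B$ consists \emph{entirely} of $\sigma$-normal elements. Indeed, if $z=\lambda+s$ with $\lambda\in K$ and $\sigma(s)=s$, then $\sigma(z)=\bar\lambda+s$ and, since $\lambda,\bar\lambda$ are central,
\[
z\sigma(z)=\lambda\bar\lambda+(\lambda+\bar\lambda)s+s^{2}=\sigma(z)z.
\]
Now $\dim_k B=18$, $\dim_k F=\dim_k\bigl(g\cdot(B,\sigma)_+\bigr)=9$, and $\dim_k\bigl(K+(B,\sigma)_+\bigr)=10$ (because $K\not\subset(B,\sigma)_+$), so
\[
F\cap\bigl(K+(B,\sigma)_+\bigr)\neq 0.
\]
Any nonzero element of this intersection is the $z$ you were looking for, and $s:=z^{-1}g\in(B,\sigma)_+$ gives $g=zs$. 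This is characteristic-free and two lines long; no quadratic equation needs to be solved, no cubic subfield analysis is required, and the infiniteness of $k$ is not even used at this step. Your framework would have led you straight to this had you asked not ``which elements of $F$ are normal?'' but ``which elements of $B$ are \emph{obviously} normal, and does that locus meet $F$?''.
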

\begin{proof} We may assume characteristic of $k$ is $2$, the proof when characteristic is not $2$ is as in (\cite{KMRT}, 17.25). Then $(B,\sigma)_+=\text{Sym}(B,\sigma)=\text{Skew}(B,\sigma)$ and 
$\text{Dim}_k(\text{Sym}(B,\sigma))=9$. Since $\sigma$ restricts to the nontrivial $k$-automorphism of $K$, $K\not\subset\text{Sym}(B,\sigma)$. 
Hence $\text{Dim}_k(K+\text{Sym}(B,\sigma))=10$. Also $g\in B^*$, hence 
$$\text{Dim}_k(g.\text{Sym}(B,\sigma))=\text{Dim}_k(\text{Sym}(B,\sigma))=9.$$ 
Hence $g.\text{Sym}(B,\sigma)\cap (K+\text{Sym}(B,\sigma))\neq 0$. Thus we can find $s_0\in\text{Sym}(B,\sigma)-\{0\}$, $\lambda\in K$ and $z_0\in\text{Sym}(B,\sigma)$ such that 
$gs_0=\lambda+z_0$. Let $z:=\lambda+z_0$. Then $z\in B^*$ and  
$$z\sigma(z)=(\lambda+z_0)(\sigma(\lambda)+z_0)=(\sigma(\lambda)+z_0)(\lambda+z_0)=\sigma(z)z.$$
Moreover, $g.s_0=z$ implies $g=z.(s_0)^{-1}$. Hence we have the required factorization $g=zs$ for $s=s_0^{-1}$.
\end{proof} 
\noindent
We now have
\begin{theorem}\label{Str-R-triv} Let $A$ be an Albert (division) algebra. Let $S\subset A$ be a $9$-dimensional subalgebra. Then, with the notations as above, {\bf Str}$(A,S)$ is $R$-trivial. 
\end{theorem}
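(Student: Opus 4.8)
The plan is to prove $R$-triviality of $\mathbf{Str}(A,S)$ by exploiting the explicit structure-theoretic description of this group established in Theorem \ref{Str-ABsigma+}, namely the surjection
$$
f:k^*\times H_0\longrightarrow \mathbf{Str}(A,S)(k),\qquad H_0=\{(g,q)\in B^*\times U(B,\sigma_u)\mid N_B(q)N_B(\sigma(g)^{-1}g)=1\},
$$
with kernel isomorphic to $K^*$. Since a surjective morphism of algebraic groups (defined over $k$) sends $R$-trivial groups to $R$-trivial groups, and since $R$-triviality is inherited by images, the whole problem reduces to showing that the source $k^*\times H_0$ is $R$-trivial, or more precisely that every $k$-point of $\mathbf{Str}(A,S)$ is $R$-equivalent to $1$ through rational curves. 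First I would set up this reduction carefully, noting that $R$-triviality must be checked over all field extensions $L$ of $k$, and that the entire description of Theorem \ref{Str-ABsigma+} base-changes to $L$ (the subalgebra $S_L\subset A_L$ remains $9$-dimensional, and $B_L,\sigma,u,\mu$ furnish the same parametrization).

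Next I would decompose a general element $\eta\in \mathbf{Str}(A,S)(k)$, written via the formula $(*)$ as $(b,x)\mapsto\gamma(gb\sigma(g),\sigma(g)^{\#}xq)$, into pieces each of which is visibly connected to the identity by a rational path. The scalar homothety factor (the $\gamma\in k^*$) is trivially $R$-trivial, since $t\mapsto R_{(1-t)\gamma+t}$ (or a multiplicative interpolation away from the zero locus) gives a rational curve in $\mathbf{Str}(A)$ joining $R_\gamma$ to $1$, exactly as in the proof of Theorem \ref{Instr}. The genuinely nontrivial content is the $(g,q)$ part. Here I would use Proposition \ref{yanchevskii}: the factorization $g=zs$ with $s\in(B,\sigma)_+$ and $z\sigma(z)=\sigma(z)z$ splits the $B^*$-component into a symmetric part, which is absorbed into an inner structure element (a $U$-operator, hence $R$-trivial by Theorem \ref{Instr}), and a ``$\sigma$-normal'' part $z$ whose reduced norm behaves well. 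For the unitary component $q\in U(B,\sigma_u)$, I would invoke that $\mathbf{SU}(B,\sigma_u)\cong\mathbf{Aut}(A/S)$ and that $\mathbf{U}(B,\sigma_u)$, being a reductive group of absolute rank $2$ (type $A_2$ with a central torus), is rational by the result of Chernousov–Platonov cited after Proposition \ref{rational}, hence $R$-trivial; the compatibility constraint $N_B(q)=N_B(\sigma(g)^{-1}g)$ is handled by Lemma \ref{unitary}, which produces the needed unitary element as a continuous (rational) function of the data.

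The main obstacle I anticipate is the bookkeeping needed to connect these individual $R$-trivialities into a single rational path for $\eta$ itself, rather than merely showing each factor lies in some $R$-trivial subgroup. The subtlety is that $R(\mathbf{Str}(A)(k))$ is a normal subgroup, so it suffices to show the class of $\eta$ in $\mathbf{Str}(A,S)(k)/R$ is trivial, and for this I would argue that the factorization $\eta=R_\gamma\cdot(\text{inner part})\cdot(\text{unitary part})$ places $\eta$ in the product of subgroups each contained in $R(\mathbf{Str}(A)(k))$; since that subgroup is normal, the product of such elements is again $R$-equivalent to $1$. Concretely, the inner part lands in $Instr(A)\subset R(\mathbf{Str}(A)(k))$ by Theorem \ref{Instr}, the scalar part is $R$-trivial directly, and the rational path for the unitary/$z$ part comes from the rationality of the rank-$2$ groups; I would deform $(g,q)$ to $(1,1)$ along a rational curve in $H_0$ (respecting the norm constraint via Lemma \ref{unitary} applied to the interpolated data) and push it forward through $f$. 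The care required is ensuring the interpolating curves remain regular at $0$ and $1$ and avoid the divisors where invertibility fails, exactly as in Theorem \ref{Instr}; once that is arranged, $\eta(0)=\eta$, $\eta(1)=1$ exhibits the desired $R$-equivalence, and running the same argument over every extension $L/k$ yields $R$-triviality of $\mathbf{Str}(A,S)$.
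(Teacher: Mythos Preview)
Your overall strategy matches the paper's: parametrize $\mathbf{Str}(A,S)$ via Theorem~\ref{Str-ABsigma+}, peel off the scalar, factor $g$ via Yanchevskii, and treat the symmetric and ``normal'' pieces separately using rationality of a rank-$2$ group. However, there is a genuine gap in your execution of the $z$-part. You apply Proposition~\ref{yanchevskii} with the involution $\sigma$, obtaining $z\sigma(z)=\sigma(z)z$, and then rely on Lemma~\ref{unitary} to produce $q\in U(B,\sigma_u)$ with the correct norm ``as a continuous (rational) function of the data.'' Lemma~\ref{unitary} is purely an existence statement; it does not furnish a rational section of $N_B:U(B,\sigma_u)\to K^{(1)}$, so you have no way to choose $q_t$ rationally along the deformation $z_t=(1-t)z+t$. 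Relatedly, your appeal to Chernousov--Platonov for $\mathbf{U}(B,\sigma_u)$ fails: that group has absolute rank $3$ (it is a form of $GL_3$), not $2$, so the cited rationality result does not apply to it.

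The paper's fix is precisely to apply Yanchevskii with respect to $\sigma_u$ rather than $\sigma$, obtaining $\sigma_u(z)z=z\sigma_u(z)$ and $\sigma_u(s)=s$. The point is that $\sigma_u$-normality is preserved along the linear interpolation $z_t$, and then $q_t:=\sigma_u(z_t)^{-1}z_t$ is an \emph{explicit} rational function of $t$ landing in $U(B,\sigma_u)$ with the right norm, so $(z_t,q_t)\in H_0$ gives the rational curve in $\mathbf{Str}(A,S)$ directly. The residual discrepancy between the given $q$ and this $p=\sigma_u(z)^{-1}z$ lies in $SU(B,\sigma_u)$ (same reduced norm), and only here is Chernousov--Platonov invoked, legitimately, for the rank-$2$ group $\mathbf{SU}(B,\sigma_u)$. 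Your symmetric piece is also not literally a $U$-operator of $A$ (so Theorem~\ref{Instr} does not apply as stated), but that part is easily repaired by the same interpolation $s_t=(1-t)s+t$, which keeps $(s_t,1)\in H_0$; the essential missing idea is the choice of involution in the Yanchevskii step.
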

\begin{proof} Let $\psi\in Str(A,S)$. Since $Dim(S)=9$, $S\cong(B,\sigma)_+$ for a suitable degree $3$ central simple algebra over a quadratic \'{e}tale extension $K$ of $k$, with an involution $\sigma$ of the second kind. Conjugating by a suitable automorphism of $A$, we may assume $S=(B,\sigma)_+$ and $A=J(B,\sigma, u,\mu)$ for suitable parameters. 
Let 
$$H_0=\{(g,q)\in B^*\times U(B,\sigma_u)|N_B(\sigma(g)^{-1}g)=N_B(q)\}.$$ 
Clearly $H_0$ is a subgroup of $B^*\times U(B,\sigma_u)$. Then, by Theorem \ref{Str-ABsigma+}, 
$\psi((b,x))=\gamma(gb\sigma(g),\sigma(g)^{\#}xq)$ for $\gamma\in k^*$ and $(g,q)\in H_0$. By Proposition \ref{yanchevskii}, we can write $g=zs$ with $s,z\in B^*$ satisfying 
$\sigma_u(z)z=z\sigma_u(z),~\sigma_u(s)=s$. Then, since $N_B(\sigma(s)^{-1}s)=\overline{N_B(s)}^{-1}N_B(s)=1$, it follows that $(s,1)\in H_0$. 

Since the scalar homotheties in {\bf Str}$(A,S)$ constitute a $1$ dimensional $k$-split torus, by (\cite{Vos}, \S 16, Prop. 2, 16.2, Prop. 3), scalar multiplications over $k$ are in $R(${\bf Str}$(A,S)(k))$. 

Hence we may assume $\psi((b,x))=(gb\sigma(g), \sigma(g)^{\#}xq)$. Now, using $g=zs$ as above (with $\sigma_u(s)=s,~\sigma_u(z)z=z\sigma_u(z)$) and that $(s,1)\in H_0$, writing $\theta$ for the 
norm similarity $(b,x)\mapsto (sb\sigma(s),\sigma(s)^{\#}x)$, we have  $\psi=\psi'\circ\theta$, where $\psi'\in Str(A,S)$ with $\psi'((b,x)):=(zb\sigma(z),\sigma(z)^{\#}xq)$,~$\sigma_u(z)z=z\sigma_u(z)$ and $(z,q)\in H_0$. 
\vskip1mm
\noindent
{\bf Claim :} $\theta\in R(${\bf Str}$(A,S)(k))$. 
To see this, define $s_t:=(1-t)s+t\in B_{\overline{k}}$ and let $\lambda:\mathbb{A}^1_k\rightarrow$ {\bf Str}$(A,S)$ be defined by 
$$\lambda(t)((b,x))=(s_tb\sigma(s_t),\sigma(s_t)^{\#}x),~(b,x)\in A_{\overline{k}}=J(B_{\overline{k}},\sigma, u, \mu)=(B_{\overline{k}},\sigma)_+\oplus B_{\overline{k}}.$$
Then $\lambda$ is a rational map, defined over $k$, regular at $0$ and $1$ and $\lambda(0)=\theta$, 
$\lambda(1)=1$ in {\bf Str}$(A,S)$. This settles the claim.

Hence it only remains to show that $\psi'\in R(${\bf Str}$(A,S)(k))$. For this, observe that $p:=\sigma_u(z)^{-1}z\in U(B,\sigma_u)$ since $z\sigma_u(z)=\sigma_u(z)z$. We have, 
$$N_B(p)=\overline{N_B(z)}^{-1}N_B(z)=\overline{N_B(gs^{-1})}^{-1}N_B(gs^{-1})=N_B(\sigma(g)^{-1}g)=N_B(q).$$
Hence $q=pq'$ for some $q'\in SU(B,\sigma_u)$. We therefore have $(z,q)=(z,pq')\in H_0$ for some $q'\in SU(B,\sigma_u)$.

Hence $(z,q)=(z,p)(1,q')$ and $(1,q')\in H_0$. Also $\text{\bf SU}(B,\sigma_u)\subset $ {\bf Str}$(A,S)$ via $\eta\mapsto[(b,x)\mapsto (b,x\eta)],~\eta\in\text{\bf SU}(B,\sigma_u)$. By (\cite{CP}), {\bf SU}$(B,\sigma_u)$ is rational, hence $R$-trivial. Hence the automorphism $\psi_0':(b,x)\mapsto (b,xq')$ belongs to $R(${\bf Str}$(A,S)(k))$. \\
\noindent
Let $\psi_0\in Str(A,S)$ be defined by $\psi_0(b,x):=(zb\sigma(z),\sigma(z)^{\#}xp)$. Then $\psi'=\psi_0'\circ\psi_0$. 
\vskip1mm
\noindent
{\bf Claim :} $\psi_0\in R(${\bf Str}$(A,S)(k))$. 
To see this, define $z_t=(1-t)z+t\in B_{\overline{k}}$. Then $\sigma_u(z_t)z_t=z_t\sigma_u(z_t)$. Let $q_t=\sigma_u(z_t)^{-1}z_t$ whenever $z_t\in B_{\overline{k}}^*$. Then $q_t\in\text{\bf U}(B,\sigma_u)$ and the map 
$$f:\mathbb{A}^1_k\rightarrow\text{\bf Str}(A,S),~~f(t)((b,x))=(z_tb\sigma(z_t),\sigma(z_t)^{\#}xq_t)$$ 
is defined on an open subset of $\mathbb{A}^1_k$, is a rational map and $f(t)\in$ {\bf Str}$(A,S)$ corresponds to $(z_t,q_t)\in H_0$. In particular, $f(0)$ corresponds to the pair $(z_0,q_0)=(z,p)$, hence $f(0)=\psi_0$. Now $f(1)$ corresponds to 
$(z_1, q_1)=(1,q_1)\in H_0$. But $q_1=\sigma_u(z_1)^{-1}z_1=1$ since $z_1=1$. Hence $f(1)=1\in$ {\bf Str}$(A,S)$. This settles the claim and the proof of the theorem is complete. 
\end{proof}
\begin{corollary}\label{Aut-R-triv-S} Let $A$ be an Albert (division) algebra and $S$ a $9$-dimensional subalgebra of $A$. Then $Aut(A,S)\subset R(\text{\bf Str}(A)(k))$.
\end{corollary}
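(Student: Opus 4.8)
The plan is to deduce this immediately from Theorem \ref{Str-R-triv}, the $R$-triviality of $\text{\bf Str}(A,S)$. The first step is the elementary observation that $Aut(A,S)$ sits inside $Str(A,S)$. Indeed, every automorphism of $A$ preserves the norm form exactly, so it lies in $\text{\bf Str}(A)$ with similarity factor $1$; an automorphism that moreover stabilizes $S$ is thus a norm similarity of $A$ stabilizing $S$. Hence
$$
Aut(A,S)=\{\phi\in Aut(A)\mid \phi(S)=S\}\subseteq\{f\in Str(A)\mid f(S)=S\}=Str(A,S)(k),
$$
the group of $k$-points of the closed subgroup $\text{\bf Str}(A,S)$ of $\text{\bf Str}(A)$.

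Next I would invoke Theorem \ref{Str-R-triv}. Since $\text{\bf Str}(A,S)$ is $R$-trivial, the defining property applied to the base field $k$ itself gives $Str(A,S)(k)/R=\{1\}$; equivalently, $R(\text{\bf Str}(A,S)(k))=Str(A,S)(k)$. Combining this with the inclusion of the previous step yields $Aut(A,S)\subseteq R(\text{\bf Str}(A,S)(k))$.

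The remaining step is the compatibility of $R$-equivalence with passage to the ambient group. Because $\text{\bf Str}(A,S)$ is a closed subgroup of $\text{\bf Str}(A)$, any witnessing chain of rational maps $f_i:\mathbb{A}^1_k\rightarrow\text{\bf Str}(A,S)$, regular at $0$ and $1$, that connects a given element to $1$ inside $\text{\bf Str}(A,S)$ composes with the closed immersion $\text{\bf Str}(A,S)\hookrightarrow\text{\bf Str}(A)$ to give rational maps $\mathbb{A}^1_k\rightarrow\text{\bf Str}(A)$ with the same regularity and the same endpoints. Therefore $R(\text{\bf Str}(A,S)(k))\subseteq R(\text{\bf Str}(A)(k))$, and chaining the inclusions produces $Aut(A,S)\subseteq R(\text{\bf Str}(A)(k))$, as claimed.

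There is essentially no genuine obstacle here beyond recording these containments correctly; the only point meriting a word of care is the monotonicity of $R(-)$ under inclusion of a closed subgroup, which holds precisely because $R$-equivalence to $1$ is witnessed by rational curves in the group and such curves remain admissible rational curves in any group containing it. The real content of the statement lives entirely in Theorem \ref{Str-R-triv}, of which this corollary is a direct consequence.
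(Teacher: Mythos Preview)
Your argument is correct and is exactly the approach the paper intends: the corollary is stated immediately after Theorem \ref{Str-R-triv} with no separate proof, so the paper too regards it as the trivial consequence of $Aut(A,S)\subset Str(A,S)$ together with the $R$-triviality of $\text{\bf Str}(A,S)$ and the obvious monotonicity $R(\text{\bf Str}(A,S)(k))\subset R(\text{\bf Str}(A)(k))$.
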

\noindent
We now can prove
\begin{theorem}\label{Str-R-triv-L} Let $A$ be a first Tits construction Albert division algebra and $L\subset A$ a cubic cyclic subfield. Then $Str(A,L)\subset R(${\bf Str}$(A)(k))$.
\end{theorem}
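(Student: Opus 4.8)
The plan is to read the statement off directly from the structural decomposition already established in Theorem \ref{str-gate}, combined with the $R$-triviality facts accumulated earlier in this section. The crucial background input is that $R(\mathbf{Str}(A)(k))$ is a normal subgroup of $\mathbf{Str}(A)(k)$ (the general feature of $R$-equivalence on connected groups recalled in \S 2), hence in particular closed under products. By Theorem \ref{str-gate} we have
\[
Str(A,L) \subset C \cdot Instr(A) \cdot H,
\]
where $C$ is the group of scalar homotheties and $H$ is the subgroup of $Aut(A)$ generated by automorphisms stabilizing $9$-dimensional subalgebras of $A$. It therefore suffices to show that each of the three factors on the right lands inside $R(\mathbf{Str}(A)(k))$, and then multiply.

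First I would dispose of the scalar homotheties. The group $C$ is a $1$-dimensional $k$-split torus inside $\mathbf{Str}(A)$, so by the same Voskresenskii argument (\cite{Vos}, \S 16) invoked in the proof of Theorem \ref{Str-R-triv}, every homothety $R_a$ with $a \in k^*$ lies in $R(\mathbf{Str}(A)(k))$; thus $C \subset R(\mathbf{Str}(A)(k))$. Next, the containment $Instr(A) \subset R(\mathbf{Str}(A)(k))$ is precisely Theorem \ref{Instr}, so the middle factor needs no further argument.

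The remaining factor $H$ requires only that each of its generators lies in $R(\mathbf{Str}(A)(k))$. By construction, every generator of $H$ is an automorphism $\phi$ stabilizing some $9$-dimensional subalgebra $S \subset A$, that is $\phi \in Aut(A,S)$; and Corollary \ref{Aut-R-triv-S} gives $Aut(A,S) \subset R(\mathbf{Str}(A)(k))$. Since $R(\mathbf{Str}(A)(k))$ is a subgroup, the group $H$ generated by all such $\phi$ is contained in it as well. Combining the three containments and using closure under products, Theorem \ref{str-gate} yields $Str(A,L) \subset C \cdot Instr(A) \cdot H \subset R(\mathbf{Str}(A)(k))$, as required.

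I do not anticipate a genuine obstacle here: the theorem is essentially a bookkeeping assembly of Theorem \ref{str-gate}, Theorem \ref{Instr}, and Corollary \ref{Aut-R-triv-S}. The single point deserving care is the passage from generators to the whole of $H$: although $H$ is a priori only a subgroup of $Aut(A)$ generated by possibly many elements, each generator sits inside a group of the form $Aut(A,S)$ already known to be $R$-trivial inside $\mathbf{Str}(A)$, and the set of elements $R$-equivalent to the identity is a subgroup, so it absorbs all finite products of generators.
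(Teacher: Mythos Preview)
Your proof is correct and follows essentially the same route as the paper: decompose via Theorem \ref{str-gate} into $C\cdot Instr(A)\cdot H$, handle $C$ and $Instr(A)$ with the split-torus argument and Theorem \ref{Instr}, and then absorb each generator of $H$ using the $R$-triviality of $\mathbf{Str}(A,S)$ (the paper cites Theorem \ref{Str-R-triv} directly rather than its Corollary \ref{Aut-R-triv-S}, but that is the same content). Your explicit remark that $R(\mathbf{Str}(A)(k))$ is a subgroup, needed to pass from generators of $H$ to all of $H$, is exactly the point the paper uses implicitly.
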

\begin{proof} By Theorem \ref{str-gate}, $Str(A,L)\subset C.Instr(A).H$. By Theorem \ref{Instr} it follows that $C.Instr(A)\subset R(${\bf Str}$(A)(k))$, where $C$ is the subgroup of $Str(A)$ consisting of scalar homotheties and $H$ is the subgroup of $Aut(A)$ generated by all automorphisms stabilizing $9$-dimensional subalgebras of $A$. So it suffices to prove $H\subset R(${\bf Str}$(A)(k))$. 
Any element of $H$ is a product of automorphisms, each stabilizing some $9$-dimensional subalgebra of $A$. Let $h\in H$ and write $h=h_1h_2\cdots h_r$ with $h_i\in Aut(A,S_i)$, $S_i$ a $9$-dimensional subalgebra of $A$. But by the above theorem $Aut(A,S_i)\subset Str(A,S_i)\subset R(${\bf Str}$(A)(k))$. Hence $h\in R(${\bf Str}$(A)(k))$ and the proof is complete.   
\end{proof}
\begin{corollary}\label{cor-R-triv-L} Assume that $A$ is a first Tits construction and $L\subset A$ a cyclic cubic subfield. Then $Aut(A,L)\subset R(\text{\bf Str}(A)(k))$.
\end{corollary}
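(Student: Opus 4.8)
The plan is to deduce this corollary directly from Theorem \ref{Str-R-triv-L}, since the only content to supply is the elementary inclusion of groups $Aut(A,L)\subseteq Str(A,L)$; once that is in place, the theorem does all the work.

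First I would recall the observation made at the opening of \S 4: any norm similarity of $A$ fixing the identity element $1\in A$ is automatically an automorphism, and conversely every automorphism of $A$ preserves the cubic norm $N$ exactly (as $N$ is intrinsic to the Jordan structure), hence is a norm similarity with multiplier $1$. In other words, $Aut(A)$ is precisely the stabilizer of $1$ inside $Str(A)$, and in particular $Aut(A)\subseteq Str(A)$.

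Next I would observe that an element of $Aut(A,L)$ is, by definition, an automorphism of $A$ stabilizing the cubic subfield $L$. By the previous paragraph it is then a norm similarity stabilizing $L$, so it lies in $Str(A,L)$. This yields the inclusion $Aut(A,L)\subseteq Str(A,L)$. Since $A$ is a first Tits construction Albert division algebra and $L\subset A$ is a cyclic cubic subfield, Theorem \ref{Str-R-triv-L} applies and gives $Str(A,L)\subseteq R(\text{\bf Str}(A)(k))$. Composing the two inclusions produces $Aut(A,L)\subseteq R(\text{\bf Str}(A)(k))$, which is the assertion.

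I do not expect any genuine obstacle here: the substantive work is already contained in Theorem \ref{Str-R-triv-L}, which in turn rests on the decomposition $Str(A,L)\subseteq C.Instr(A).H$ of Theorem \ref{str-gate} together with the $R$-triviality of the scalar homotheties, of $Instr(A)$ (Theorem \ref{Instr}), and of the automorphisms stabilizing $9$-dimensional subalgebras (Theorem \ref{Str-R-triv}). The corollary is therefore immediate, and the proof amounts to recording the inclusion $Aut(A,L)\subseteq Str(A,L)$ and invoking the theorem.
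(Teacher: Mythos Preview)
Your proposal is correct and follows exactly the paper's own argument: the paper simply records the inclusion $Aut(A,L)\subset Str(A,L)$ and invokes Theorem~\ref{Str-R-triv-L}. Your additional justification of why automorphisms are norm similarities is fine but not strictly needed, as this is already stated earlier in the paper.
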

\begin{proof} We have $Aut(A,L)\subset Str(A,L)$, hence the result follows from Theorem \ref{Str-R-triv-L}.
\end{proof}
\noindent
\begin{theorem}\label{aut-R-triv} Let $A$ be an Albert division algebra, arising from the first Tits construction. Then $Aut(A)\subset R($\text{\bf Str}$(A)(k))$.
\end{theorem}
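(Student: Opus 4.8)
The plan is to feed an arbitrary $\phi\in Aut(A)$ through Theorem \ref{fixedpoint}, which yields a cubic subfield $L\subseteq A$ fixed pointwise by $\phi$, and then to run a case analysis on the fixed subalgebra $A^{\phi}$. Since proper subalgebras of an Albert division algebra have dimension $1$, $3$ or $9$ and $A^{\phi}\supseteq L\supsetneq k$, we have $\dim_k A^{\phi}\in\{3,9\}$, and the $k$-rationality of $A^{\phi}$ lets us read off its type over $k$.

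First dispose of the two easy cases. If $\dim_k A^{\phi}=9$, then $S:=A^{\phi}$ is a $9$-dimensional subalgebra fixed pointwise, so $\phi\in Aut(A/S)\subseteq Aut(A,S)$ and Corollary \ref{Aut-R-triv-S} gives $\phi\in R(\text{\bf Str}(A)(k))$ at once. If $\dim_k A^{\phi}=3$ with $L=A^{\phi}$ a \emph{cyclic} cubic subfield, then $\phi\in Aut(A,L)$ and Corollary \ref{cor-R-triv-L} gives the conclusion. Note that these two cases already settle the theorem for \emph{pure} first constructions, since there every cubic subfield is cyclic by Proposition \ref{deg-6}(1).

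The remaining case, $\dim_k A^{\phi}=3$ with $L=A^{\phi}$ a (separable) \emph{non-cyclic} cubic subfield, is the genuinely new one and is the main obstacle: it is precisely the situation excluded for pure first constructions. My plan is to reduce it to the cyclic case. The cleanest route would be to show that $\phi$ nevertheless \emph{stabilizes} some cyclic cubic subfield $L'\subseteq A$ (not necessarily pointwise); Corollary \ref{cubic-cyclic} would then write $\phi$ as a product of three automorphisms each stabilizing a $9$-dimensional subalgebra, and each factor lies in $R(\text{\bf Str}(A)(k))$ by Corollary \ref{Aut-R-triv-S}. An alternative is a descent route: over the quadratic resolvent $K=k(\sqrt{d})$, $d=\operatorname{disc}(L)$, the subfield $L_K:=L\otimes_k K$ is a cyclic cubic subfield of $A_K$ fixed pointwise by $\phi\otimes 1$, so the settled cyclic case applies over $K$, and one then tries to push the connecting rational curves back down to $k$.

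The hard part is exactly this non-cyclic case. For the first route the difficulty is producing a $k$-rational $\phi$-stable cyclic cubic subfield, i.e. a rational fixed point for the action of $\phi$ on the variety of cyclic cubic subfields, which needs a real input since $A^{\phi}=L$ only supplies the non-cyclic $L$. For the descent route the difficulty is that $R$-triviality does not descend along a quadratic extension for free, so one must arrange the connecting curves over $K$ to be defined over $k$ (for instance by combining a curve with its $\operatorname{Gal}(K/k)$-conjugate). I expect the resolution to exploit the explicit extension formulas of \S 4 (Theorem \ref{ext-norm-sim}, Theorem \ref{Str-ABsigma+}) together with the fact that the pointwise stabilizer $Aut(A/L)$ is a trialitarian group of type $D_4$, thereby reducing the whole matter to the rationality and $R$-triviality inputs already available for the $9$-dimensional stabilizers.
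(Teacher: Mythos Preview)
Your easy cases are fine, but the non-cyclic case is left genuinely open: you propose two routes and then admit you cannot complete either. Producing a $\phi$-stable cyclic cubic subfield from the bare datum $A^{\phi}=L$ non-cyclic is not something the available tools give you, and the descent route founders exactly where you say it does. So as it stands this is a gap, not a proof.

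The paper sidesteps the whole difficulty by a change of viewpoint: instead of analysing the fixed subalgebra $A^{\phi}$, it \emph{fixes in advance} a cyclic cubic subfield $E\subset A$ (one exists in any first Tits construction Albert division algebra, by \cite{PR3}) and tracks its image. If $\phi(E)=E$ one is done by Corollary~\ref{cor-R-triv-L}. Otherwise $F:=\phi(E)$ is again cyclic (being $k$-isomorphic to $E$), and the subalgebra $S$ generated by $E$ and $F$ is $9$-dimensional, say $S=(B,\sigma)_+$. One now builds, using Skolem--Noether inside $B$ and the explicit extension formula of Theorem~\ref{ext-norm-sim}, a norm similarity $\widetilde{\psi}\in Str(A,S)$ with $\widetilde{\psi}(E)=F$; composing with a suitable $\chi\in C.\,Instr(A)$ that maps $\widetilde{\psi}(1)\in F$ back to $1$ (available because $F$ is cyclic and sits in some $D_+\subset A$ by \cite{PR2}, Cor.~4.5) yields an automorphism $\chi\circ\widetilde{\psi}\in Aut(A)$ carrying $E$ to $F$. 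Then $\phi^{-1}\circ\chi\circ\widetilde{\psi}$ stabilises $E$ and lies in $R(\text{\bf Str}(A)(k))$ by Theorem~\ref{Str-R-triv-L}, while $\chi$ and $\widetilde{\psi}$ lie there by Theorem~\ref{Instr} and Theorem~\ref{Str-R-triv} respectively; hence so does $\phi$.

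The point to take away is strategic: rather than hoping $\phi$ already fixes something cyclic, one \emph{chooses} the cyclic subfield first and manufactures an $R$-trivial element that matches $\phi$ on it. This never encounters a non-cyclic obstruction, because both $E$ and $\phi(E)$ are cyclic by construction.
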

\begin{proof} Fix a cubic cyclic extension $E\subset A$, which is possible by (\cite{PR3}). Let $\phi\in Aut(A)$. If $\phi(E)=E$, then $\phi\in Aut(A,E)$ and by Theorem \ref{Str-R-triv-L}, 
$\phi\in R($\text{\bf Str}$(A)(k))$. 

So we may assume 
$F:=\phi(E)\neq E$. Let $S$ be the subalgebra of $A$ generated by $E$ and $F$. Then (cf. \cite{PR6}, Thm. 1.1), $S$ is $9$-dimensional. Hence $S=(B,\sigma)_+$ for a degree $3$ central simple algebra $B$ over 
a quadratic \'{e}tale extension $K$ of $k$, with an involution of the second kind over $K$. Hence we may write $A=J(B,\sigma,u,\mu)$ for suitable parameters. If $Z(B)=K=k\times k$, then we have $(B,\sigma)_+=D_+$ for a degree $3$ central division algebra $D$ over $k$, and $A=J(D,\mu_0)$ for some $\mu_0\in k^*$. 

Then, 
by the classical Skolem-Noether theorem, $\phi|E:E\rightarrow F$ extends to an automorphism of $D_+$ and hence, by Corollary \ref{first-aut-ext}, to an automorphism $\theta$ of $A$, $\theta(D_+)=D_+$ and $\theta|E=\phi|E$. Hence $\psi:=\phi^{-1}\circ\theta$ fixes $E$ pointwise. By Corollary \ref{cor-R-triv-L}, $\psi\in Aut(A,E)\subset R($\text{\bf Str}$(A)(k))$. Also, $\theta\in Aut(A,D_+)$. Hence by Corollary \ref{Aut-R-triv-S}, 
$\theta\in Aut(A,D_+)\subset R($\text{\bf Str}$(A)(k))$. It follows that $\phi\in R($\text{\bf Str}$(A)(k))$. 

So now we assume $K$ is a field. First extend $\phi|E$ to $\phi':EK\rightarrow FK\subset B$ to a $K$-linear isomorphism. By Skolem-Noether theorem, there exists $g\in B^*$ such that $\phi'=Int(g):EK\rightarrow FK$. Since $\phi(E)=F$, it follows that $Int(g)(E)=F=\phi(E)\subset(B,\sigma)_+$. Therefore 
$$\sigma(gxg^{-1})=gxg^{-1}~\forall x\in E.$$ 
This yields 
$$\sigma(g)^{-1}x\sigma(g)=gxg^{-1}~\forall x\in E.$$
Hence $\sigma(g)g$ centralizes $E$ and hence $EK$ in $B$. Since $EK$ is maximal commutative in $B$, it follows that $\sigma(g)g\in EK$. Also $\sigma(g)g\in (B,\sigma)_+$. Hence $\sigma(g)g\in E$. 
Similarly, it follows that $g\sigma(g)\in F$. 

Let $e:=\sigma(g)g\in E$ and $f:=g\sigma(g)\in F$. Define $\psi\in Str(S)$ by 
$$\psi(x)=gx\sigma(g),~x\in S.$$
Then for any $x\in E$ we have $xe\in E$. Hence, 
$$\psi(x)=gx\sigma(g)=gxeg^{-1}=\phi(xe)\in\phi(E)= F.$$
Therefore $\psi(E)=F$ and $\psi(1)=f\in F$. Since $A$ is a first Tits construction and $F$ is cyclic, there is a degree $3$ central division algebra $D$ over $k$, such that $F\subset D_+\subset A$ (see \cite{PR2}, Cor. 4.5). 

By (\cite{Th-1}, Lemma 5.1), there exists $\chi\in C.Instr(A)$ such that $\chi(f)=1,~\chi(D_+)=D_+$ and $\chi(F)=F$. Hence $\chi\circ\psi(1)=1$. 

We extend $\psi$ to an element $\widetilde{\psi}\in Str(A,S)$ by the formula in Theorem \ref{ext-norm-sim}. Then $\chi\circ\widetilde{\psi}\in Str(A)$ and $\chi\circ\widetilde{\psi}(1)=1$. Hence $\chi\circ\widetilde{\psi}\in Aut(A)$ with $\chi\circ\widetilde{\psi}(E)=F$. Therefore $\phi^{-1}\circ\chi\circ\widetilde{\psi}\in Aut(A)$ and $\phi^{-1}\circ\chi\circ\widetilde{\psi}(E)=E$. Hence, by Theorem \ref{Str-R-triv-L}, 
$\phi^{-1}\circ\chi\circ\widetilde{\psi}\in R($\text{\bf Str}$(A)(k))$. Also, $\chi\in Str(A,D_+)$, and $\widetilde{\psi}\in Str(A,S)$. We have already shown in Theorem \ref{Str-R-triv} that these are subgroups of $R($\text{\bf Str}$(A)(k))$. It now follows that $\phi\in R($\text{\bf Str}$(A)(k))$. 
\end{proof} 
\begin{lemma} Let $L$ be a cubic extension of $k$ and $\lambda\in k*$. Let $S$ denote the first Tits process $J(L,\lambda)$ over $k$. Let $a\in L*$. Define $\chi\in Str(S)$ by 
$$ \chi=R_{N_L(a)}U_{(0,0,1)}U_{(0, N_L(a)^{-1}, 0)}.$$
Then $\chi(a)=1$.
\end{lemma}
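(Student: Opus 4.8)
The plan is to prove the identity by an explicit computation inside the first Tits process $S=J(L,\lambda)=L\oplus L\oplus L$, using only the cubic-norm-structure data recorded in the preliminaries. First I would assemble the ingredients needed to evaluate the two $U$-operators: the base point $1=(1,0,0)$; the adjoint $(x,y,z)^{\#}=(x^{\#}-yz,\lambda^{-1}z^{\#}-xy,\lambda y^{\#}-zx)$ and its polarization $\times$; the trace bilinear form, which on $S$ is $T((x_1,y_1,z_1),(x_2,y_2,z_2))=T_L(x_1x_2)+T_L(y_1z_2)+T_L(z_1y_2)$; and the defining formula $U_x(y)=T(x,y)\,x-x^{\#}\times y$. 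Throughout, $a$ is identified with $(a,0,0)$ through the first summand, and since $N_L(a)\in k^*$ the map $R_{N_L(a)}$ is the scalar homothety $w\mapsto N_L(a)\,w$. I would also record the two elementary facts that for a scalar $s\in k\subset L$ one has $s^{\#}=s^2$, and that $a\,a^{\#}=N_L(a)$. Finally, since $N((0,N_L(a)^{-1},0))=\lambda N_L(a)^{-3}\neq 0$ and $N((0,0,1))=\lambda^{-1}\neq 0$, both elements are invertible, so the two $U$-operators are genuine norm similarities lying in $Instr(S)$; together with the homothety this exhibits $\chi\in C.Instr(S)\subset Str(S)$ and makes the displayed $\chi$ well defined.

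Next I would evaluate $\chi(a)$ from right to left, writing $n=N_L(a)$. The first step computes $w_1:=U_{(0,n^{-1},0)}(a)$. The key observations are that $(0,n^{-1},0)^{\#}=(0,0,\lambda n^{-2})$ (using $s^{\#}=s^2$), that the trace pairing $T((0,n^{-1},0),(a,0,0))$ vanishes because the two arguments are supported in different summands, and that the cross product $(0,0,\lambda n^{-2})\times(a,0,0)$ has a single surviving coordinate; this shows $U_{(0,n^{-1},0)}$ carries the first summand into the third. The second step, $w_2:=U_{(0,0,1)}(w_1)$, is entirely analogous: one uses $(0,0,1)^{\#}=(0,\lambda^{-1},0)$ and again finds a vanishing trace term and a single surviving cross-product coordinate, so that $U_{(0,0,1)}$ carries the third summand back into the first. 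The composite therefore returns $a$ to the first summand, multiplied by an explicit scalar assembled from powers of $\lambda$ and $n$. Applying the homothety $R_n$ and simplifying the accumulated scalar factors then yields $\chi(a)=1$.

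The routine but error-prone part of this argument is the scalar bookkeeping: one must track the powers of $\lambda$ and of $N_L(a)$ introduced by the adjoints and cross products at each slot and verify that, after $R_{N_L(a)}$, they collapse to give exactly $(1,0,0)$. As a guide I would run, alongside the coordinate computation, the norm-multiplier cross-check: $U_b$ scales $N$ by $N(b)^2$ and $R_c$ scales $N$ by $c^3$, which independently pins down the total scalar factor that the homothety must contribute and hence confirms that the factors produced by the two slot-permuting $U$-operators have been handled correctly. There is no conceptual obstacle beyond this accounting, the entire content being the concrete action of the off-diagonal elements $(0,\ast,0)$ and $(0,0,\ast)$ on the first summand of the Tits process.
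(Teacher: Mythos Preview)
Your plan is exactly the paper's approach: the paper simply refers the reader to the direct coordinate calculation in Lemma~5.1 of \cite{Th-1}. However, if you actually carry out the arithmetic you outline, it will \emph{not} give $\chi(a)=1$, and your own norm-multiplier cross-check already detects this. With $n=N_L(a)$ one has $N((0,n^{-1},0))=\lambda n^{-3}$ and $N((0,0,1))=\lambda^{-1}$, so the displayed $\chi$ scales $N$ by $n^{3}(\lambda^{-1})^{2}(\lambda n^{-3})^{2}=n^{-3}$; hence $N(\chi(a))=n^{-3}N(a)=n^{-2}$, which is $1$ only when $N_L(a)^{2}=1$. The coordinate computation confirms it: $U_{(0,n^{-1},0)}(a,0,0)=(0,0,\lambda n^{-2}a)$, then $U_{(0,0,1)}$ sends this to $(n^{-2}a,0,0)$, and $R_n$ yields $(n^{-1}a,0,0)\neq 1$ for $a\notin k$.

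The point is that the displayed formula in the lemma carries a typographical slip: the middle $U$-operator should be $U_{(0,\,N_L(a)^{-1}a,\,0)}$, exactly as written a few lines later in the proof of Theorem~\ref{Main} and in the cited Lemma~5.1 of \cite{Th-1}. With this correction your computation goes through verbatim: $(0,n^{-1}a,0)^{\#}=(0,0,\lambda n^{-2}a^{\#})$, so $U_{(0,n^{-1}a,0)}(a,0,0)=(0,0,\lambda n^{-2}a^{\#}a)=(0,0,\lambda n^{-1})$ using $a\,a^{\#}=n$; then $U_{(0,0,1)}(0,0,\lambda n^{-1})=(n^{-1},0,0)$, and $R_n$ gives $1$. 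The norm check now balances as well, since $N((0,n^{-1}a,0))=\lambda n^{-2}$ gives total factor $n^{3}\lambda^{-2}(\lambda n^{-2})^{2}=n^{-1}=N(a)^{-1}$. So your strategy is correct; just flag the missing ``$a$'' in the second slot and proceed.
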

\begin{proof} The proof follows from a calculation exactly same as in the proof of (Lemma 5.1, \cite{Th-1}).
\end{proof} 
We are now in a position to prove our main result:
\begin{theorem}\label{Main} Let $A$ be an Albert division algebra arising from the first Tits construction. Then {\bf Str}$(A)$ is $R$-trivial.
\end{theorem}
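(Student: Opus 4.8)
The strategy I would follow is to reduce the statement for an arbitrary norm similarity to the case of an automorphism, exploiting that $\mathbf{Aut}(A)$ is exactly the stabiliser of $1\in A$ in $\mathbf{Str}(A)$, and then to invoke Theorem \ref{aut-R-triv}. Explicitly, I would prove the factorisation $Str(A)=C\cdot Instr(A)\cdot Aut(A)$, where $C$ is the one-dimensional split torus of scalar homotheties $R_t$, $t\in k^*$. Granting this, $R$-triviality is immediate over $k$: the scalar homotheties lie in $R(\mathbf{Str}(A)(k))$ because $C$ is a split $k$-torus, $Instr(A)\subset R(\mathbf{Str}(A)(k))$ by Theorem \ref{Instr}, $Aut(A)\subset R(\mathbf{Str}(A)(k))$ by Theorem \ref{aut-R-triv}, and $R(\mathbf{Str}(A)(k))$ is a subgroup; hence $C\cdot Instr(A)\cdot Aut(A)\subset R(\mathbf{Str}(A)(k))$, forcing $Str(A)/R=\{1\}$.

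To establish the factorisation, let $\psi\in Str(A)$ and put $a:=\psi(1)$. Since $\psi$ is a norm similarity, $N(a)=\nu(\psi)\neq 0$, so $a$ is invertible. It suffices to find $\chi\in C\cdot Instr(A)$ with $\chi(a)=1$: then $\chi\circ\psi$ fixes $1$, hence is an automorphism, and $\psi=\chi^{-1}\circ(\chi\circ\psi)$ lies in $C\cdot Instr(A)\cdot Aut(A)$. If $a\in k^*$ I would take $\chi=R_{a^{-1}}\in C$. If $a\notin k$ then, since the proper subalgebras of the division algebra $A$ have dimension $1$, $3$ or $9$, the field $L:=k(a)$ is a cubic subfield of $A$. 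Using the structure theory of first Tits constructions, I would realise $a$ inside a first Tits process subalgebra $S=J(L,\lambda)\subset A$, with $a$ sitting in the first copy of $L$. The preceding Lemma then produces $\chi=R_{N_L(a)}U_{(0,0,1)}U_{(0,N_L(a)^{-1},0)}\in Str(S)$ with $\chi(a)=1$. Because $S$ is a cubic-norm substructure of $A$, the $U$-operators and the homothety in this expression are the restrictions to $S$ of the corresponding operators of $A$; reading the same formula in $A$ therefore gives an element of $C\cdot Instr(A)$ that still sends $a$ to $1$, as required.

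It remains to upgrade triviality over $k$ to $R$-triviality, i.e. $\mathbf{Str}(A)(F)/R=Str(A_F)/R=\{1\}$ for every field extension $F/k$. Base change preserves the first Tits construction property (the invariant $f_3$ remains zero), and a first construction Albert algebra is either a division algebra or split. If $A_F$ is a division algebra, it is a first Tits construction division algebra over $F$ and the argument above applies verbatim with $F$ as base field. If $A_F$ is split, then $\mathbf{Str}(A_F)$ is a split connected reductive group of type $E_6$ (up to a central $\mathbb{G}_m$); such a group is a rational variety and hence $R$-trivial. In either case $Str(A_F)/R=\{1\}$, so $\mathbf{Str}(A)$ is $R$-trivial.

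The genuinely hard ingredient, Theorem \ref{aut-R-triv}, is already available, so the work that remains is the reduction itself. I expect the main obstacle to be the embedding step: guaranteeing that an arbitrary invertible $a\notin k$ lies in a first Tits process subalgebra $J(k(a),\lambda)$ of $A$ with $a$ in the first coordinate, and checking that the operators of $S$ appearing in the Lemma genuinely restrict from those of $A$ (so that the constructed $\chi$ really lands in $C\cdot Instr(A)$). For cyclic $k(a)$ this follows from the cited inclusion $k(a)\subset D_+\subset A$ together with $J(k(a),\lambda)\subset J(D,\lambda)$; the general cubic case needs the analogous Petersson--Racine structural statement, which is the point demanding the most care.
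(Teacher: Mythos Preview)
Your argument is correct and follows the same overall strategy as the paper: set $a=\psi(1)$, embed $a$ in a $9$-dimensional first Tits process subalgebra $S=J(k(a),\lambda)\subset A$ (the paper cites \cite{PR2}, Cor.~4.5 for exactly this embedding, which covers the general cubic subfield you flag as the delicate point), build $\chi$ from the Lemma so that $\chi(a)=1$, and then reduce to Theorem~\ref{aut-R-triv}. The one genuine difference is how $\chi$ is placed in $R(\mathbf{Str}(A)(k))$. The paper views $\chi$ as an element of $Str(S)$, extends it to $\widetilde{\chi}\in Str(A,S)$ via the explicit formula $(*)$ of Theorem~\ref{ext-norm-sim}, and then invokes Theorem~\ref{Str-R-triv} (that $\mathbf{Str}(A,S)$ is $R$-trivial). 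You instead note that for $v\in S$ the ambient operator $U_v^A$ restricts to $U_v^S$ on $S$, so the same product of a scalar and two $U$-operators, read in $A$, already lies in $C\cdot Instr(A)\subset R(\mathbf{Str}(A)(k))$ by Theorem~\ref{Instr}. This is a legitimate and slightly cleaner shortcut: it avoids appealing to Theorem~\ref{Str-R-triv} at this step, though that theorem is still consumed inside the proof of Theorem~\ref{aut-R-triv}, so the global dependency graph is unchanged. Your explicit handling of the base-change dichotomy (division versus split over $F$) is also welcome; the paper defers the split case to Theorem~\ref{main-reduced}.
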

\begin{proof} Let $\psi\in Str(A)$ and $a:=\psi(1)$. If $a\in k^*$, we have $R_a\in R($\text{\bf Str}$(A)(k))$ and $R_{a^{-1}}\circ\psi(1)=1$. Hence, by Theorem \ref{aut-R-triv}, $R_{a^{-1}}\circ\psi\in Aut(A)\subset R($\text{\bf Str}$(A)(k))$. This implies $\psi\in R($\text{\bf Str}$(A)(k))$. So we may assume $a\notin k^*$. Then $L:=k(a)\subset A$ is a cubic separable subfield and, by (\cite{PR2}, Cor. 4.5), there is $\lambda\in k^*$ such that the Tits process $S=J(L,\lambda)\subset A$. Define $\chi\in Str(S)$ by 
$$\chi=R_{N(a)}U_{(0,0,1)}U_{(0,N(a)^{-1}a,0)}.$$
By the above lemma $\chi(a)=1$. We now extend $\chi$ to $\widetilde{\chi}\in Str(A)$ using $(*)$. Then $\widetilde{\chi}(S)=S$ and hence by Theorem \ref{Str-R-triv}, $\widetilde{\chi}\in R($\text{\bf Str}$(A)(k))$. Also,   
$$\widetilde{\chi}\circ\psi(1)=\widetilde{\chi}(a)=1.$$
Hence $\widetilde{\chi}\circ\psi\in Aut(A)\subset R($\text{\bf Str}$(A)(k)) $ and it follows that $\psi\in R($\text{\bf Str}$(A)(k))$.  
\end{proof} 
\begin{theorem}\label{Isom} Let $A$ be an Albert division algebra arising from the first Tits construction over a field $k$. Then {\bf Isom}$(A)$ is $R$-trivial. 
\end{theorem}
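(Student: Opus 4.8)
The goal is to show that $\mathbf{Isom}(A)$ is $R$-trivial when $A$ is a first Tits construction Albert division algebra. We already have the Main Theorem (Theorem \ref{Main}) establishing that $\mathbf{Str}(A)$ is $R$-trivial. Now recall the structural relationship: $\mathbf{Isom}(A)$ is the commutator subgroup of $\mathbf{Str}(A)$, it is the full group of isometries of the norm form $N$, and it is simple, simply connected of type $E_6$. The structure group $\mathbf{Str}(A)$ is reductive of type $E_6$ with $\mathbf{Isom}(A)$ as its semisimple part and a central one-dimensional torus of scalar homotheties.

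The plan is to deduce $R$-triviality of $\mathbf{Isom}(A)$ from that of $\mathbf{Str}(A)$ by a standard argument relating the two groups. Since $R$-triviality must hold for all field extensions $F$ of $k$, and since $A \otimes_k F$ remains a first Tits construction (being split by a cubic extension, and first constructions base-change to first constructions, the extension either keeping $A\otimes_k F$ a division algebra or rendering it reduced/split — in the latter case the group becomes isotropic and separate arguments apply), it suffices to produce a surjection $\mathbf{Str}(A)(F)/R \twoheadrightarrow \mathbf{Isom}(A)(F)/R$ or, more directly, to exhibit every element of $\mathbf{Isom}(A)(F)$ as $R$-equivalent to the identity inside $\mathbf{Isom}(A)$ itself. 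First I would record the exact sequence
\[
1 \longrightarrow \mathbf{Isom}(A) \longrightarrow \mathbf{Str}(A) \xrightarrow{\ \nu\ } \mathbb{G}_m \longrightarrow 1,
\]
where $\nu$ is the multiplier (similarity factor) character; the scalar homothety $R_a$ satisfies $\nu(R_a)=a^3$, so $\nu$ is surjective on $k$-points and $\mathbf{Str}(A) = C \cdot \mathbf{Isom}(A)$ with $C$ the torus of homotheties.

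The key point is that the rational curves witnessing $R$-equivalence in $\mathbf{Str}(A)$ can be arranged to lie in $\mathbf{Isom}(A)$, or be corrected to do so. Given $\phi \in \mathbf{Isom}(A)(F)$, Theorem \ref{Main} gives a chain of rational maps $f_i:\mathbb{A}^1_F \dashrightarrow \mathbf{Str}(A)$ connecting $\phi$ to $1$. The obstruction is that these intermediate curves may leave $\mathbf{Isom}(A)$, since the witnessing elements (the $U$-operators and scalar homotheties used in the proof of Theorem \ref{Main}) have nontrivial multiplier. To fix this, I would compose each curve $f_i(t)$ with a compensating scalar homothety $R_{\mu(t)}$, where $\mu(t)$ is a rational function chosen so that $\nu(R_{\mu(t)} f_i(t)) = 1$ for all $t$, thereby landing the corrected curve inside $\mathbf{Isom}(A)$. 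Since $\nu(U_x) = N(x)^2$ is a nonvanishing rational function of the parameter along the homotopies used, and $\nu(R_a)=a^3$, solving $\mu(t)^3 \cdot \nu(f_i(t)) = 1$ requires extracting a cube root — this is precisely where a cubic base change enters naturally, consistent with the hypothesis that first constructions split over a cubic extension. I expect this multiplier-correction step to be the main obstacle: one must ensure the correcting homothety is itself $F$-rational and regular at the relevant points, which is where the first-construction hypothesis (and the freedom to adjust by elements already known to be $R$-trivial) does the real work.

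An alternative, cleaner route avoids cube roots entirely. Since by the Tits–Weiss corollary (Corollary \ref{TW}, which follows from Theorem \ref{Main}) we have $\mathbf{Str}(A)(F) = C(F)\cdot Instr(A)(F)$ for every extension $F$, and since $\mathbf{Isom}(A) = [\mathbf{Str}(A),\mathbf{Str}(A)]$, I would instead argue directly: every element of $\mathbf{Isom}(A)(F)$ lies in $R(\mathbf{Str}(A)(F))$ by Theorem \ref{Main}, so it suffices to show that for a connected isotropic simply connected group the $R$-equivalence class of the identity computed inside the reductive overgroup coincides with that computed inside the semisimple subgroup. This follows from Gille's theorem (\cite{G}, Thm. 7.2) identifying $\mathbf{Isom}(A)(F)/R$ with the Whitehead group $W(F,\mathbf{Isom}(A))$, together with the fact that multiplication by the central torus $C$ does not change the unipotent-radical generators: the $F$-points of unipotent radicals of $F$-parabolics of $\mathbf{Str}(A)$ already lie in $\mathbf{Isom}(A)$, so $\mathbf{Str}(A)(F)^\dagger = \mathbf{Isom}(A)(F)^\dagger$. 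Hence the natural map $\mathbf{Isom}(A)(F)/R \to \mathbf{Str}(A)(F)/R$ is injective, and since the target is trivial the source is trivial. Whichever route is chosen, the conclusion $\mathbf{Isom}(A)(F)/R = \{1\}$ for all $F$ gives the theorem; I would favour the second approach precisely because it sidesteps the cube-root rationality issue that makes the first approach delicate.
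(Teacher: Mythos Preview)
Your second approach has a fatal gap: when $A$ is a division algebra, $\mathbf{Isom}(A)$ is \emph{anisotropic} over $k$ (this is stated explicitly in the preliminaries), so Gille's theorem identifying $G(F)/R$ with the Whitehead group does not apply to $\mathbf{Isom}(A)$ over those extensions $F$ for which $A\otimes_k F$ remains division. For the same reason $\mathbf{Str}(A)$ has no proper $F$-parabolics in that regime (its only $F$-split torus is the central $\mathbb{G}_m$), so $\mathbf{Str}(A)(F)^{\dagger}$ is trivial and your comparison $\mathbf{Str}(A)(F)^{\dagger}=\mathbf{Isom}(A)(F)^{\dagger}$ carries no content. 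The injectivity of $\mathbf{Isom}(A)(F)/R\to\mathbf{Str}(A)(F)/R$ is exactly what is at stake and cannot be obtained this way.

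Your first approach is the right idea but, as you yourself note, correcting only by scalar homotheties forces you to solve $\mu(t)^3=\nu(f(t))^{-1}$ and there is no reason for a rational cube root to exist. The paper resolves this by correcting with $U$-operators \emph{and} scalars simultaneously: given a rational path $t\mapsto\psi_t$ in $\mathbf{Str}(A)$ with $\psi_0=\psi\in\mathbf{Isom}(A)$ and $\psi_1=1$, set $a_t:=\psi_t(1)$ and $\lambda_t:=N(a_t)$ (which equals the multiplier of $\psi_t$), and define
\[
\chi_t:=R_{\lambda_t^{-1}}\,U_{a_0}^{-1}\,U_{a_t}\,\psi_t.
\]
Since $\nu(R_{\lambda_t^{-1}})=\lambda_t^{-3}$, $\nu(U_{a_t})=\lambda_t^{2}$, $\nu(U_{a_0}^{-1})=N(a_0)^{-2}=1$ (because $\psi\in\mathbf{Isom}(A)$), and $\nu(\psi_t)=\lambda_t$, the product has multiplier $1$, so $\chi_t\in\mathbf{Isom}(A)$. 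Everything is visibly $F$-rational in $t$; the factorization $\lambda_t^{-1}=\lambda_t^{-3}\cdot\lambda_t^{2}$ is what replaces the missing cube root. This is the missing idea in your sketch.
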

\begin{proof} Let $\psi\in Isom(A)=$ {\bf Isom}$(A)(k)\subset Str(A)=$ {\bf Isom}$(A)(k)$. By Theorem \ref{Main}, {\bf Str}$(A)$ is $R$-trivial. 
Let $\theta:\mathbb{A}^1_k\rightarrow$ {\bf Str}$(A)$ be a rational map connecting $\psi$ to $1$ in {\bf Str}$(A)$, $\theta(0)=\psi,~\theta(1)=1$. Let $\psi_t:=\theta(t),~a_t:=\psi_t(1),~\lambda_t:=N(a_t)$. Let $\chi_t:=R_{\lambda_t^{-1}} U_{a_0}^{-1}U_{a_t}\psi_t$. Then 
$$\chi_t(1)=\lambda_t^{-1}U_{a_0}^{-1}U_{a_t}\psi_t(1)=\lambda_t^{-1}U_{a_0}(a_t^3).$$
Since 
$$N(a_0)=N(\psi_0(1))=N(\psi(1))=1,$$
we have
$$N(\chi_t(1))=\lambda_t^{-3}N(a_0)^2N(a_t)^3=1,$$
It follows that $\chi_t\in$ {\bf Isom}$(A)$. Also, $t\mapsto \chi_t$ is a rational map $\mathbb{A}^1_k\rightarrow$ {\bf Isom}$(A)$. Now $\lambda_0=N(\psi_0(1))=N(\psi(1))=1$ as $\psi$ is a norm isometry. Hence 
$$\chi_0=U_{a_0}^{-1}U_{\psi(1)}\psi_0=\psi,$$ 
and $\chi_1=1$. This proves the assertion.
\end{proof}
\section{\bf Tits-Weiss conjecture and $E^{78}_{8,2}$ }
In this section, we will prove the Tits-Weiss conjecture for Albert division algebras that arise from the first Tits construction. Recall that in this paper, we denote by $Str(A)$ the group {\bf Str}$(A)(k)$ of $k$-rational points of the algebraic group {\bf Str}$(A)$, the full group of {\bf norm similarities} of $A$. We denote by {\bf Isom}$(A)$ the full group of {\bf norm isometries} of $A$ and $Isom(A):=\text{\bf Isom}(A)(k)$. By (\cite{SV}, Thm. 7.3.2) {\bf Isom}$(A)$ is a connected, simple, simply connected algebraic group of type $E_6$ defined over $k$. It follows that {\bf Isom}$(A)$ is the commutator subgroup of {\bf Str}$(A)$. Let $C$ denote the subgroup of $Str(A)$ consisting of all scalar homotheties $R_a,~a\in k^*$. \\
\noindent
{\bf Tits-Weiss conjecture :} Let $A$ be as above, then $Str(A)=C.Instr(A)$. 
\vskip0.5mm
\noindent
Equivalently, the conjecture predicts that $\frac{Str(A)}{C.Instr(A)}=\{1\}$. This is equivalent to the {\bf Kneser-Tits conjecture} for groups of type $E^{78}_{8,2}$. 

Let {\bf G} be a simple, simply connected group defined over $k$, with Tits index $E^{78}_{8,2}$ and {\bf G}$(k)^{\dagger}$ be the subgroup of {\bf G}$(k)$ generated by the $k$-rational points of the unipotent radicals of parabolic $k$-subgroups of {\bf G}. The Kneser-Tits conjecture for {\bf G} predicts that $\frac{\text{\bf G}(k)}{\text{\bf G}(k)^{\dagger}}=\{1\}$. 

The (reductive) anisotropic kernel of {\bf G} is the structure group {\bf Str}$(A)$ of a uniquely (upto isotopy) determined Albert division algebra $A$ defined over $k$ (see \cite{T1}, 3.3.1). It is proved in (\cite{TW}, 37.41, 42.3.6) that $\frac{\text{\bf G}(k)}{\text{\bf G}(k)^{\dagger}}\cong\frac{Str(A)}{C.Instr(A)}$. Hence, to prove the Tits-Weiss conjecture, we need to show the first quotient is trivial. By (Thm. 7.2, \cite{G}), it suffices to prove {\bf G} is $R$-trivial. In fact, it suffices to prove $W(k,G)=G(k)/G(k)^{\dagger}\cong G(k)/R=\{1\}$.

Let {\bf S} be a maximal $k$-split torus in {\bf G}. By the Bruhat-Tits decomposition for {\bf G}, it follows that there is a birational $k$-isomorphism $\text{\bf G}\cong\text{\bf U}\times Z_{\text{\bf G}}(\text{\bf S})\times\text{\bf U}$, where {\bf U} is the unipotent radical of a minimal parabolic $k$-subgroup containing {\bf S}. It is well known that the underlying variety of {\bf U} is rational over $k$. Hence, by (\cite{Vos}, Prop. 1, \S 16), it follows that 
$$\frac{\text{\bf G}(k)}{R}\cong \frac{Z_{\text{\bf G}}(\text{\bf S})(k)}{R}.$$ 
Again, by (\cite{G2}, 1.2), 
$$ \frac{Z_{\text{\bf G}}(\text{\bf S})(k)}{R}\cong\frac{Z_{\text{\bf G}}(\text{\bf S})}{\text{\bf S}}(k)/R.$$
Hence, to prove the Tits-Weiss conjecture, it suffices to prove that the second quotient above is trivial. We now proceed to prove this. Note that $\text{\bf H}:=Z_{\text{\bf G}}(\text{\bf S})$ is a connected reductive $k$-subgroup of {\bf G}, is the reductive anisotropic kernel of {\bf G} and its commutator subgroup $\text{\bf H}'$ is simple $k$-anisotropic, strongly inner of type $E_6$. Hence, by (\cite{T1}, 3.3), $\text{\bf H}'=\text{\bf Isom}(A)$. We first prove
\begin{lemma} The connected center of {\bf H} equals $S$.
\end{lemma}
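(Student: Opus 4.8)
The plan is to deduce the equality from the two obvious containments, the nontrivial one being a pure rank computation. Throughout write $\mathbf{H} = Z_{\mathbf{G}}(\mathbf{S})$ and let $Z(\mathbf{H})^{\circ}$ denote its connected center, which (since $\mathbf{H}$ is reductive) coincides with the radical of $\mathbf{H}$ and is a torus.

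First I would collect the structural inputs that are already at hand. As the centralizer of a torus in the connected group $\mathbf{G}$, the group $\mathbf{H}$ is connected and reductive, and it contains a maximal torus of $\mathbf{G}$ (any maximal torus of $\mathbf{G}$ through $\mathbf{S}$ lies in $Z_{\mathbf{G}}(\mathbf{S})$); hence the absolute rank of $\mathbf{H}$ equals that of $\mathbf{G}$, namely $\operatorname{rank}(\mathbf{H}) = 8$. By the discussion preceding the lemma, the commutator subgroup $\mathbf{H}'$ is simple, simply connected, strongly inner of type $E_6$, so $\operatorname{rank}(\mathbf{H}') = 6$. Finally, since $\mathbf{G}$ has Tits index $E^{78}_{8,2}$, its $k$-rank is $2$, and therefore $\dim \mathbf{S} = 2$.

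The containment $\mathbf{S} \subseteq Z(\mathbf{H})^{\circ}$ is immediate: the torus $\mathbf{S}$ is abelian, so $\mathbf{S} \subseteq Z_{\mathbf{G}}(\mathbf{S}) = \mathbf{H}$, and by the definition of the centralizer every element of $\mathbf{S}$ commutes with every element of $\mathbf{H}$, whence $\mathbf{S} \subseteq Z(\mathbf{H})$; being connected and containing the identity, $\mathbf{S}$ lies in the identity component $Z(\mathbf{H})^{\circ}$. For the reverse containment I would argue by dimension. Writing $\mathbf{H}$ as the almost-direct product $\mathbf{H} = Z(\mathbf{H})^{\circ}\cdot\mathbf{H}'$ and taking a maximal torus of the shape $Z(\mathbf{H})^{\circ}\cdot T'$ with $T'$ a maximal torus of $\mathbf{H}'$, one obtains $\dim Z(\mathbf{H})^{\circ} = \operatorname{rank}(\mathbf{H}) - \operatorname{rank}(\mathbf{H}') = 8 - 6 = 2$. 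Thus $\mathbf{S}$ is a connected subtorus of the torus $Z(\mathbf{H})^{\circ}$ with $\dim \mathbf{S} = \dim Z(\mathbf{H})^{\circ} = 2$; since a proper closed connected subgroup of a torus has strictly smaller dimension, the two must coincide, giving $Z(\mathbf{H})^{\circ} = \mathbf{S}$.

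The argument is essentially bookkeeping, so there is no genuine obstacle; the only point demanding care is the rank accounting, i.e. confirming that $\mathbf{H}$ carries the full absolute rank $8$ of $E_8$ while $\mathbf{H}'$ has rank exactly $6$ and $\mathbf{S}$ has dimension exactly $2$ (the $k$-rank read off from the index $E^{78}_{8,2}$). All three inputs are furnished by the theory of the Tits index together with the identification of $\mathbf{H}'$ with \textbf{Isom}$(A)$ recorded above.
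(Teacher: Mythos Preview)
Your proof is correct and follows essentially the same approach as the paper's: both use the almost-direct product decomposition $\mathbf{H}=Z(\mathbf{H})^{\circ}\cdot\mathbf{H}'$ together with the rank count $\operatorname{rank}(\mathbf{G})=8$, $\operatorname{rank}(\mathbf{H}')=6$, and $\dim\mathbf{S}=2$ to conclude that the connected center has dimension exactly $2$ and hence coincides with $\mathbf{S}$. The paper phrases the count as an inequality $8\geq 6+\operatorname{rank}(\mathbf{S}')$ with $\mathbf{S}'=Z(\mathbf{H})^{\circ}\supseteq\mathbf{S}$, while you compute $\dim Z(\mathbf{H})^{\circ}=8-6$ directly, but the substance is identical.
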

\begin{proof} We know that {\bf H} is connected reductive and its connected center contains {\bf S}. We have $\text{\bf H}=\text{\bf H}'.Z(\text{\bf H})^{\circ}$, an almost direct product of groups. Since $\text{\bf H}'$ is simple of type $E_6$, $Rank(\text{\bf H}')=6$. Let {\bf T} be any maximal $k$-torus in $\text{\bf H}'$. The connected center $Z(\text{\bf H})^{\circ}$ of {\bf H} is a torus 
$\text{\bf S}'\supset\text{\bf S}$ and $\text{\bf S}'\cap\text{\bf T}$ is finite. Hence $\text{\bf T}.\text{\bf S}'$ is a torus in {\bf H} of dimension $6+rank(\text{\bf S}')$ and 
$rank(\text{\bf S}')\geq 2$. Therefore
$$Rank(\text{\bf G})=8\geq 6+ rank(\text{\bf S}').$$
It follows that $\text{\bf S}'=\text{\bf S}$.  
\end{proof}
\noindent
In light of this lemma, we have 
$$\frac{Z_{\text{\bf G}}(\text{\bf S})}{\text{\bf S}}(k)/R=\frac{\text{\bf H}'\text{\bf S}}{\text{\bf S}}(k)/R\cong\frac{\text{\bf H}'}{\text{\bf H}'\cap\text{\bf S}}(k)/R.$$
\noindent
It is well known (see Thm. 7.3.2, \cite{SV}. Thm. 14.27, \cite{Spr-J}) that for an Albert algebra $A$ over $k$, {\bf Str}$(A)$ is a connected reductive group defined over $k$. We need information on its center:  
\begin{lemma} Let $A$ be an Albert algebra over $k$. Then the group $\text{\bf Str}(A)$ is connected reductive with center $\mathbb{G}_m$. 
The commutator $[\text{\bf Str}(A),\text{\bf Str}(A)]=\text{\bf Isom}(A)$ and the sequence 
$$\{1\}\rightarrow\text{\bf Isom}(A)\rightarrow\text{\bf Str}(A)\rightarrow\mathbb{G}_m\rightarrow\{1\}$$
is exact, where $\text{\bf Str}(A)\rightarrow\mathbb{G}_m$ maps a norm similarity to its factor of similitude. 
\end{lemma}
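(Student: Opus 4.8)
The plan is to analyse the similitude character and to exploit the almost direct product structure that reductivity of $\text{\bf Str}(A)$ forces. That $\text{\bf Str}(A)$ is connected reductive and that its derived group is $\text{\bf Isom}(A)$, a simple, simply connected group of type $E_6$, has already been recorded in \S 2 (see \cite{SV}, Thm. 7.3.2 and \cite{Spr-J}), so I would take these as given and concentrate on the exactness of the sequence and the identification of the center.

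First I would introduce the similitude (multiplier) morphism $\nu\colon\text{\bf Str}(A)\to\mathbb{G}_m$ sending a norm similarity $f$ to its factor $\nu(f)$ determined by $N(f(x))=\nu(f)N(x)$. This is a morphism of algebraic groups, and by the very definition of a norm isometry its kernel is exactly $\text{\bf Isom}(A)$; thus the sequence is exact on the left and in the middle. For surjectivity on the right I would use the scalar homotheties $R_a\colon x\mapsto ax$: since $N$ is a cubic form, $\nu(R_a)=a^3$, and the cubing morphism $\mathbb{G}_m\to\mathbb{G}_m$ is surjective over $\overline{k}$ (a purely inseparable isogeny in characteristic $3$, but surjective in every characteristic). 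Hence $\nu$ is surjective and the displayed sequence is exact. In particular the subgroup $C=\{R_a\mid a\in\mathbb{G}_m\}$ maps onto $\mathbb{G}_m$ under $\nu$, so $\text{\bf Str}(A)=C\cdot\text{\bf Isom}(A)$, an almost direct product.

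Next I would pin down the center. Scalar homotheties commute with every $k$-linear endomorphism of $A$, so $C\cong\mathbb{G}_m$ lies in $Z(\text{\bf Str}(A))$; it remains to prove the reverse inclusion. Given a central element $z$, write $z=ch$ with $c\in C$ and $h\in\text{\bf Isom}(A)$ using the product decomposition above; since $c$ is central, $h$ is central in $\text{\bf Str}(A)$, whence $h\in Z(\text{\bf Isom}(A))$. The center of a simply connected group of type $E_6$ is $\mu_3$, and I would identify this copy of $\mu_3$ concretely as $C\cap\text{\bf Isom}(A)=\{R_a\mid a^3=1\}$: these cube-root-of-unity homotheties are isometries (as $\nu(R_a)=a^3=1$) and are central, so they constitute a central $\mu_3$ inside $\text{\bf Isom}(A)$, which by its order must be all of $Z(\text{\bf Isom}(A))$. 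Consequently $h\in C$, and therefore $z\in C$, giving $Z(\text{\bf Str}(A))=C\cong\mathbb{G}_m$.

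The step I expect to require the most care is the center computation, specifically the scheme-theoretic identification $Z(\text{\bf Isom}(A))=C\cap\text{\bf Isom}(A)=\mu_3$ in arbitrary characteristic (the case of characteristic $3$, where $\mu_3$ is infinitesimal, being the delicate one) and the legitimacy of the decomposition $z=ch$ at the level of group schemes rather than merely on $\overline{k}$-points. I would handle these by working with the central isogeny $C\times\text{\bf Isom}(A)\to\text{\bf Str}(A)$ and computing kernels of characters, so that all identifications are made functorially and the characteristic-$3$ subtleties are absorbed into the standard description of the center of $E_6$.
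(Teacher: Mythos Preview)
Your argument is correct and complete; the exactness of the sequence is handled exactly as anyone would, and your computation of the center via the almost direct product $\text{\bf Str}(A)=C\cdot\text{\bf Isom}(A)$ together with the identification $Z(\text{\bf Isom}(A))=\mu_3=\{R_a:a^3=1\}\subset C$ goes through in all characteristics once you note, as you do, that the inclusion $\{R_a:a^3=1\}\hookrightarrow Z(\text{\bf Isom}(A))$ is an equality of finite group schemes of order~$3$.

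The paper, however, computes the center by a different and more hands-on route. Rather than invoking the structure of the simply connected $E_6$, it takes a central element $\psi$, chooses two nine-dimensional subalgebras $S_1,S_2\subset A$ with $S_1\cap S_2=k$ (citing \cite{P-S-T2}), and picks automorphisms $\theta_i\in\text{\bf Aut}(A/S_i)$ whose fixed-point sets are exactly $S_i$. Centrality of $\psi$ forces $\psi$ to stabilise each $S_i$, hence $k$, so $a:=\psi(1)\in k^*$; then $R_{a^{-1}}\psi\in\text{\bf Aut}(A)$ is central in $\text{\bf Aut}(A)$, and since groups of type $F_4$ have trivial center, $\psi=R_a$. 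Thus the paper pivots on the centerlessness of $F_4$ rather than on the center of $E_6$. Your approach is cleaner and uses only the reductive-group formalism you already set up, while the paper's argument is more concrete (working directly with subalgebras of $A$) and avoids needing to know in advance that $\text{\bf Isom}(A)$ is simply connected; either is perfectly acceptable here.
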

\begin{proof} That $\text{\bf Str}(A)$ is connected reductive is proved in (3.3.1, \cite{T1}, \cite{SV}). Let $\psi\in Z(\text{\bf Str}(A))$. We choose two $9$-dimensional degree $3$ separable $k$-subalgebras of $A$, say $S_1$ and $S_2$ such that $S_1\cap S_2=k$ (see \cite{P-S-T2}, Lemma 5.2). Let $\theta_i\in \text{\bf Aut}(A/S_i)$ be such that $A^{\theta_i}=S_i$. Then since $\psi$ commutes with $\theta_i,~i=1,2$, $\psi$ must stabilize both $S_1$ as well as $S_2$. Hence $\psi$ stabilizes $S_1\cap S_2=k$. Thus $a:=\psi(1)\in k^*$ and $\phi=R_{a^{-1}}\circ\psi\in\text{\bf Aut}(A)$. Hence $\phi\in Z(\text{\bf Aut}(A))$. 
But $\text{\bf Aut}(A)$ is a group of type $F_4$, hence has trivial center. It follows that $\psi=R_a$. Hence $Z(\text{\bf Str}(A))=\mathbb{G}_m$, the $k$-subgroup of $\text{\bf Str}(A)$ consisting of all scalar homotheties $R_a,~a\in\mathbb{G}_m$. The remaining assertions follow from (\cite{SV}, Thm. 7.3.2) and (\cite{Spr-J}. Thm. 14.27).   
\end{proof}
\noindent
Now we observe that maximal $k$-split tori in {\bf G} are conjugate by $\text{\bf G}(k)$ and $\text{\bf G}\supset\text{\bf Str}(A)$ over $k$ (see \cite{TW}, 42.6). Hence we may assume that 
$\text{\bf S}\supset Z(\text{\bf Str}(A))$. Denote by {\bf Z} the center of {\bf Str}$(A)$. We have 
$$\text{\bf Str}(A)=[\text{\bf Str}(A),\text{\bf Str}(A)].\text{\bf Z}=\text{\bf Isom}(A).\text{\bf Z}.$$
Since both $\text{\bf S}$ and {\bf Z} are split over $k$, by (\cite{B}, 8.5. Corollary) we have a decomposition 
$\text{\bf S}=\text{\bf Z}.\text{\bf S}'$ of $k$-tori, with 
$\text{\bf Z}\cap\text{\bf S}'=\{1\}$. 
Hence, since $\text{\bf H}'=[\text{\bf Str}(A),\text{\bf Str}(A)]=\text{\bf Isom}(A)$, we have  
$$\text{\bf H}'\text{\bf S}=\text{\bf H}'\text{\bf Z}.\text{\bf S}'=\text{\bf Isom}(A)\text{\bf Z}.\text{\bf S}'=\text{\bf Str}(A).\text{\bf S}'.$$
Also, since $\text{\bf Z}\cap\text{\bf S}'=\{1\}$ and $\text{\bf S}'\subset\text{\bf S}=Z(\text{\bf H})^{\circ}$, it follows that $\text{\bf Str}(A)\cap\text{\bf S}'=\{1\}$. 
We therefore have
$$\frac{Z_{\text{\bf G}}(\text{\bf S})}{\text{\bf S}}=\frac{\text{\bf H}'\text{\bf S}}{\text{\bf S}}\cong\frac{\text{\bf Str}(A)\times\text{\bf S}'}{\text{\bf Z}\times\text{\bf S}'}
\cong\frac{\text{\bf Str}(A)}{\text{\bf Z}}.$$
\noindent
Finally, by Hilbert Theorem-$90$, $H^1(k,\text{\bf Z})=\{1\}$. Hence, it follows that 
$$\frac{\text{\bf Str}(A)}{\text{\bf Z}}(k)\cong\frac{\text{\bf Str}(A)(k)}{\text{\bf Z}(k)}.$$
\noindent
We can now state and prove
\begin{theorem}\label{Weiss} Let {\bf G} be a simple, simply connected algebraic group over $k$ with Tits index $E^{78}_{8,2}$, whose anisotropic kernel is split by a cubic extension of $k$. Then {\bf G} is $R$-trivial.   
\end{theorem}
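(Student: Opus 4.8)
The plan is to combine the reduction carried out just above the statement with the Main Theorem. First, since the anisotropic kernel of $\mathbf{G}$ is split by a cubic extension of $k$, Corollary~\ref{an-kernel} identifies its derived group as $\mathbf{H}'=\mathbf{Isom}(A)$ for an Albert division algebra $A$ that is a first Tits construction; consequently $\mathbf{Str}(A)$ falls exactly under the hypothesis of Theorem~\ref{Main} and is $R$-trivial. The chain of $k$-isomorphisms assembled above the statement then reduces the problem to the structure group: the Bruhat--Tits birational decomposition $\mathbf{G}\cong_{\mathrm{bir}}\mathbf{U}\times Z_{\mathbf{G}}(\mathbf{S})\times\mathbf{U}$ with $\mathbf{U}$ rational gives $\mathbf{G}(k)/R\cong Z_{\mathbf{G}}(\mathbf{S})(k)/R$; Gille's descent along the split torus gives $Z_{\mathbf{G}}(\mathbf{S})(k)/R\cong (Z_{\mathbf{G}}(\mathbf{S})/\mathbf{S})(k)/R$; and the rank and center computations identify $Z_{\mathbf{G}}(\mathbf{S})/\mathbf{S}\cong\mathbf{Str}(A)/\mathbf{Z}$ as $k$-groups, where $\mathbf{Z}=\mathbb{G}_m$ is the center of $\mathbf{Str}(A)$. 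Thus it suffices to prove $(\mathbf{Str}(A)/\mathbf{Z})(k)/R=\{1\}$.

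The heart of the argument is then a short, formal step. The central quotient $\pi\colon\mathbf{Str}(A)\to\mathbf{Str}(A)/\mathbf{Z}$ has kernel $\mathbf{Z}=\mathbb{G}_m$, so by Hilbert~90 it is surjective on $L$-points for every field $L\supseteq k$; being a morphism of algebraic groups, $\pi$ carries $R$-equivalent points to $R$-equivalent points. Since $\mathbf{Str}(A)$ is $R$-trivial, every point of $(\mathbf{Str}(A)/\mathbf{Z})(L)$ lifts through $\pi$ to a point that is $R$-equivalent to $1$, and is therefore itself $R$-equivalent to $1$. Hence $\mathbf{Str}(A)/\mathbf{Z}$ is $R$-trivial; in particular $(\mathbf{Str}(A)/\mathbf{Z})(k)/R=\{1\}$, and feeding this back through the reduction gives $\mathbf{G}(k)/R=\{1\}$.

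I expect the main obstacle to be upgrading $\mathbf{G}(k)/R=\{1\}$ to genuine $R$-triviality, that is, to $\mathbf{G}(L)/R=\{1\}$ for \emph{every} extension $L$, because over $L$ the Tits index of $\mathbf{G}$ may change and $\mathbf{S}$ need no longer be maximal $L$-split, so the reduction cannot be re-run verbatim. The point in our favour is that Theorem~\ref{Main} provides $R$-triviality of $\mathbf{Str}(A)$ over every extension, and the central-quotient argument of the previous paragraph is field-independent, so $\mathbf{Str}(A)/\mathbf{Z}$ is $R$-trivial over every $L$ as well. I would therefore run the reduction over each $L$ separately: when $A_L$ remains a division algebra it stays a first construction and the index is unchanged, so the argument applies word for word with base field $L$; when $A_L$ degenerates, $\mathbf{G}_L$ becomes strictly more isotropic and its $R$-triviality follows from the same reduction applied to the (smaller, more split) anisotropic kernel, the degeneration only making the relevant forms more rational. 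Handling these degenerate indices uniformly, rather than the base-field case, is the delicate part, and is where I would spend most of the effort.
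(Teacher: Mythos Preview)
Your proposal is correct and follows essentially the same route as the paper: the reduction of $\mathbf{G}(k)/R$ to $(\mathbf{Str}(A)/\mathbf{Z})(k)/R$ via the Bruhat decomposition, Gille's specialization result, and the identification of $Z_{\mathbf{G}}(\mathbf{S})/\mathbf{S}$ with $\mathbf{Str}(A)/\mathbf{Z}$, followed by Hilbert~90 and the $R$-triviality of $\mathbf{Str}(A)$ from Theorem~\ref{Main}, is exactly what the paper does.

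Your worry about the degenerate case over a general extension $L$ is, however, misplaced, and this is the one point worth correcting. For a first Tits construction Albert division algebra $A$, the base change $A_L$ is either again a division algebra or \emph{split} (see \cite{PR2}, Cor.~4.2); there is no spectrum of intermediate indices to work through. When $A_L$ is split, $\mathbf{Isom}(A)_L$ is the split simply connected $E_6$, hence $\mathbf{G}_L$ is the split simply connected $E_8$, for which $W(L,\mathbf{G})=\{1\}$ is classical. The paper disposes of this in a single line. So the part you flagged as ``delicate'' and where you would ``spend most of the effort'' is in fact the trivial case; all the content is already in the division case, which you have handled correctly by noting that the index is unchanged and re-running the reduction over $L$.
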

\begin{proof} Let {\bf H} denote the reductive anisotropic kernel of $G$ as in the above discussion. Then, by Corollary \ref{an-kernel}, $[\text{\bf H},\text{\bf H}]={\bf Isom}(A)$ for an Albert division algebra $A$ over $k$, arising from a first Tits construction. We have already shown 
$$\text{\bf G}(k)/R\cong\frac{\text{\bf Str}(A)}{\text{\bf Z}}(k)/R\cong\frac{\text{\bf Str}(A)(k)}{\text{\bf Z}(k)}/R.$$
\noindent
Also, by Theorem \ref{Main}, when $A$ is a first Tits construction, $\text{\bf Str}(A)$ is $R$-trivial. Let $L$ be any field extension of $k$. If $A\otimes_k L$ is split, then $\text{\bf G}$ splits over $L$ and hence $W(L,\text{\bf G})=\{1\}$. If $A\otimes_k L$ is division, then the Tits index of $\text{\bf G}$ over $L$ does not change and the anisotropic kernel of $\text{\bf G}$ over $L$ corresponds to {\bf Str}$(A\otimes_kL)$. Hence 
$$W(L,\text{\bf G})= \text{\bf G}(L)/R=\frac{\text{\bf Str}(A)(L)}{\text{\bf Z}(L)}/R=\{1\},$$
the last equality holds because $\text{\bf Str}(A)$ is $R$-trivial : since $\frac{\text{\bf Str}(A)}{\text{\bf Z}}(L)=\frac{\text{\bf Str}(A)(L)}{\text{\bf Z}(L)}$, any element of $\frac{\text{\bf Str}(A)}{\text{\bf Z}}(L)$ is represented by an element of $\text{\bf Str}(A)(L)$ and any element of $\text{\bf Str}(A)(L)$ can be joined to the identity element by an $L$-rational image of $\mathbb{A}^1_L$, as $\text{\bf Str}(A)$ is $R$-trivial. The result now follows.
\end{proof}
\vskip1mm
\noindent
{\bf Remark :} Note that we have proved above also that $\frac{\text{\bf Str}(A)}{\text{\bf Z}}$, the {\bf adjoint} form of $E_6$ corresponding to a first Tits construction Albert algebra $A$ over a field $k$ is $R$-trivial. 
\begin{corollary}\label{TW}{\bf Tits-Weiss Conjecture :} Let $A$ be a first Tits construction Albert division algebra. Then $Str(A)=C.Instr(A)$.
\end{corollary}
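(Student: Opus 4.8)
The plan is to read off the conjecture from the $R$-triviality established in Theorem \ref{Weiss}, by translating through the standard dictionary between the Whitehead group of a group of type $E^{78}_{8,2}$ and the quotient $Str(A)/C.Instr(A)$. No new computation is needed; the substantive content already sits in Theorem \ref{Main} and its consequence Theorem \ref{Weiss}.

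First I would attach to $A$ a simple, simply connected group $\text{\bf G}$ over $k$ with Tits index $E^{78}_{8,2}$ whose reductive anisotropic kernel is $\text{\bf Str}(A)$. Such a $\text{\bf G}$ exists because groups with this index are classified, up to isotopy, by Albert division algebras, and their reductive anisotropic kernel is precisely the structure group of the associated algebra (see \cite{T1}, 3.3.1, and \cite{TW}). Since $A$ is a first Tits construction, Proposition \ref{first-tits} shows that $\text{\bf Str}(A)$, and hence the anisotropic kernel of $\text{\bf G}$, splits over a cubic extension of $k$ (this is also recorded in Corollary \ref{an-kernel}). Theorem \ref{Weiss} therefore applies and yields that $\text{\bf G}$ is $R$-trivial; in particular $\text{\bf G}(k)/R=\{1\}$.

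Next I would invoke the two isomorphisms recalled in \S 6. Gille's theorem (\cite{G}, Thm.\ 7.2) identifies the Whitehead group $W(k,\text{\bf G})=\text{\bf G}(k)/\text{\bf G}(k)^{\dagger}$ with $\text{\bf G}(k)/R$, while the Tits--Weiss identification (\cite{TW}, 37.41, 42.3.6) gives $\text{\bf G}(k)/\text{\bf G}(k)^{\dagger}\cong Str(A)/C.Instr(A)$. Chaining these with the previous step produces $Str(A)/C.Instr(A)\cong \text{\bf G}(k)/R=\{1\}$, which is exactly the assertion $Str(A)=C.Instr(A)$.

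I do not expect any serious obstacle, precisely because all the analytic work has been done upstream. The only points demanding care are that the base field indeed carries a group $\text{\bf G}$ of index $E^{78}_{8,2}$ attached to the given $A$, and that the Tits--Weiss identification is available over $k$ itself rather than merely after extension; both are furnished by the cited results. One might hope to argue directly and entirely inside $\text{\bf Str}(A)$: the inclusion $C.Instr(A)\subseteq R(\text{\bf Str}(A)(k))$ is immediate from Theorem \ref{Instr} together with the $R$-triviality of the central split torus $C$. However, the reverse inclusion $R(\text{\bf Str}(A)(k))\subseteq C.Instr(A)$ is not available on its own, so combining it with Theorem \ref{Main} would not close the argument; routing through the Whitehead-group dictionary is the efficient way to supply the missing inclusion.
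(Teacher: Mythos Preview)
Your proposal is correct and follows essentially the same route as the paper. The paper's one-line proof invokes Theorem \ref{Weiss} together with the isomorphism $\frac{Str(A)}{C.Instr(A)}\cong\frac{\text{\bf G}(k)}{R}$, which is precisely the chain (Tits--Weiss identification) $\circ$ (Gille's theorem) that you spell out; your version merely makes explicit the steps already set up in the discussion of \S 6 preceding Theorem \ref{Weiss}.
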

\begin{proof}This follows immediately from the above theorem and that $\frac{Str(A)}{C.Instr(A)}\cong\frac{G(k)}{R}$, where $G$ is as in the above theorem.
\end{proof}
\section{\bf Groups with index $E^{78}_{7,1}$}
In this brief section, we wish to explore $R$-triviality issues for simple groups of type $E^{78}_{7,1}$. The reason for considering such groups is that these groups also have their anisotropic kernels 
as the structure groups of Albert division algebras and all groups of this type arise this way (see \cite{T1}, 3.3.1). 

We will consider the groups ${\bf \Xi}$ with this index whose anisotropic kernel is the structure group of an Albert division algebra, that is a Tits first construction. These groups were studied by Max Koecher in (\cite{Ko}). We use the explicit description of ${\bf \Xi}$ given by Max Koecher. 

Fix an Albert division algebra $A$ over $k$. Let ${\bf \Xi}(A)$ be the full group of birational transformations generated by $j:A_{\overline{k}}\rightarrow A_{\overline{k}},~j(x):=(-x)^{-1}$ and the translations $t_a:~x\mapsto a+x,~a,x\in A_{\overline{k}}$. Then every element of ${\bf \Xi}(A)$ has an expression $f=w\circ t_a\circ j\circ t_b\circ j\circ t_c,~a,b,c\in A_{\overline{k}}$. 

In particular $\text{\bf Str}(A)$ is contained in 
${\bf \Xi}(A)$ and $\text{\bf Isom}(A)$ is the anisotropic kernel (up to a central torus) of ${\bf \Xi}(A)$. More precisely, let {\bf Z} denote, as before, the center of $\text{\bf Str}(A)$, which is a $\mathbb{G}_m$ defined over $k$. Let $\text{\bf S}\subset{\bf \Xi}(A)$ be a maximal $k$-split torus containing {\bf Z}. Then $\text{\bf H}=Z_{\bf \Xi}(\text{\bf S})$ is a connected reductive $k$-subgroup of $G$, is the reductive anisotropic kernel of ${\bf\Xi}={\bf \Xi}(A)$. Since $k$-rank of ${\bf \Xi}$ is $1$, we have $\text{\bf S}=\text{\bf Z}$. It follows from this that $Z(\text{\bf H})^{\circ}=\text{\bf Z}$. We have, as in the previous section
$${\bf\Xi}(k)/R\cong Z_{\bf\Xi}(\text{\bf S})(k)/R\cong\frac{Z_{\bf\Xi}(\text{\bf S})}{\text{\bf S}}(k)/R.$$
\noindent
Now, 
$$\text{\bf H}=Z_{\bf\Xi}(\text{\bf S})=\text{\bf H}'.Z(\text{\bf H})^{\circ}=\text{\bf Isom}(A).\text{\bf Z}=\text{\bf Str}(A).$$
\noindent
Hence 
$$\frac{Z_{\bf\Xi}(\text{\bf S})}{\text{\bf S}}=\frac{\text{\bf Str}(A)}{\text{\bf Z}}.$$
We have shown in the last section that $\frac{\text{\bf Str}(A)}{\text{\bf Z}}(k)/R=\{1\}$. Hence we have, by arguments quite analougous to the proof of $R$-triviality of groups with index $E^{78}_{8,2}$, 
\begin{theorem}\label{Koecher} Let ${\bf\Xi}$ be a simple, simply connected algebraic group over $k$ with Tits index $E^{78}_{7,1}$, whose semisimple anistropic kernel is split over a cubic extension of $k$. Then ${\bf\Xi}$ is $R$-trivial. 
\end{theorem} 
\noindent{\bf Reduced Albert algebras :} In this mini-section, we wish to explore the group {\bf Str}$(A)$ when $A$ is a reduced Albert algebra, mainly the $R$-triviality of such groups. We recall at this stage our result from (\cite{Th-1}), which. in fact, holds over fields of arbitrary characteristics :
\begin{theorem}\label{reduced-tits-weiss} Let $A$ be a reduced Albert algebra over any field $k$. Then $Str(A)=C.Instr(A)$, where $C$ denotes the subgroup of $Str(A)$ of scalar homotheties of $A$ and $Instr(A)$ is the inner structure group. 
\end{theorem}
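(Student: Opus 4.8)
The plan is to reduce the statement to one about norm isometries by means of the multiplier character, and then to exploit the isotropy that reducedness forces. Let $\nu\colon Str(A)\rightarrow k^{*}$ denote the multiplier, so that $N(\psi(x))=\nu(\psi)N(x)$; one has $\nu(R_{a})=a^{3}$ for $a\in k^{*}$ and $\nu(U_{b})=N(b)^{2}$ for invertible $b\in A$. Since $A$ is reduced we may write $A=\mathcal{H}_{3}(\mathfrak{C},\gamma)$, and the diagonal elements $\mathrm{diag}(\alpha_{1},\alpha_{2},\alpha_{3})$ already show that $N\colon A^{*}\rightarrow k^{*}$ is surjective. Hence $\nu\big(Instr(A)\big)=k^{*2}$ and $\nu(C)=k^{*3}$, and since every $t\in k^{*}$ satisfies $t=t^{3}\cdot(t^{-1})^{2}$, we obtain $\nu\big(C\cdot Instr(A)\big)=k^{*}$. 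Thus, given $\psi\in Str(A)$, we may choose $\rho\in C\cdot Instr(A)$ with $\nu(\rho)=\nu(\psi)$; as $\ker\nu=Isom(A)$, it then suffices to prove $\rho^{-1}\psi\in C\cdot Instr(A)$. In other words, the whole theorem reduces to the containment $Isom(A)\subseteq C\cdot Instr(A)$.

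For this containment I would use the frame $e_{1},e_{2},e_{3}$ of orthogonal primitive idempotents present in any reduced Albert algebra, together with its Peirce decomposition, whose off-diagonal components carry the coordinatizing octonion algebra. In particular $\mathbf{Isom}(A)$ is isotropic, simply connected of type $E_{6}$, and the root subgroups attached to a minimal $k$-parabolic are realized explicitly inside $Instr(A)$: the elementary transformations $U_{1+x}$ (equivalently the Bergmann operators $B(x,e_{j})=\mathrm{id}-U_{x,e_{j}}+U_{x}U_{e_{j}}$, where $U_{x,y}$ is the linearized $U$-operator), with $x$ running over an off-diagonal Peirce space, are norm isometries built from $U$-operators and generate the unipotent radicals of the $k$-parabolics. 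This gives $Isom(A)^{\dagger}\subseteq Instr(A)$. The Kneser--Tits property for isotropic, simply connected groups of type $E_{6}$, valid over an arbitrary field (see \cite{G}), then yields $Isom(A)=Isom(A)^{\dagger}$, whence $Isom(A)\subseteq Instr(A)\subseteq C\cdot Instr(A)$, as required.

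The hard part is to make both steps legitimate in arbitrary characteristic. The formula $\nu(U_{b})=N(b)^{2}$ and the surjectivity of $N$ must be read off from the cubic norm structure axioms rather than from a bilinear Jordan product, since in characteristics $2$ and $3$ the trace form and the adjoint behave anomalously; this is routine but should be verified inside the quadratic Jordan formalism used throughout the paper. More substantially, one must check that the Peirce $U$-operators genuinely exhaust the root groups of the isotropic $E_{6}$ and that Kneser--Tits is available in characteristics $2$ and $3$. I would secure this by returning to the Tits-process description: reducedness makes the defining parameter a norm, so writing $A=J(D,\lambda)$ or $A=J(B,\sigma,u,\mu)$ with $\lambda$ (resp.\ $\mu$) a norm, the explicit extension formulae for $U$-operators in Theorem~\ref{ext-norm-sim} and its corollaries produce the needed generators directly. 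This parallels the division-algebra argument of Theorem~\ref{Main}, with the $R$-triviality conclusion there replaced by the sharper generation statement that reducedness, through the surjectivity of $N$, now makes possible.
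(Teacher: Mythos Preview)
Your reduction to the containment $Isom(A)\subseteq C\cdot Instr(A)$ is correct and parallels the paper's, though the paper is slightly more direct: it takes $a=\psi(1)$, observes $N(a)=\nu(\psi)\neq 0$, and notes that $\chi:=R_{N(a)^{-1}}U_{a}\psi$ already lies in $Isom(A)$, bypassing your surjectivity-of-$\nu$ argument.

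Where you genuinely diverge is in establishing that containment. The paper does not touch Kneser--Tits here at all. It cites Faulkner's theorem (\cite{F1}, \cite{F2}) that for $A$ reduced the abstract group $Isom(A)$ is simple modulo its center $Z$ (the scalar maps $R_{c}$ with $c^{3}=1$). Since $Instr(A)$ is normal in $Str(A)$, the intersection $Instr(A)\cap Isom(A)$ is a nontrivial normal subgroup of $Isom(A)$; simplicity then forces $(Instr(A)\cap Isom(A))\cdot Z=Isom(A)$, and as $Z\subset C$ one gets $Isom(A)\subset C\cdot Instr(A)$ in one stroke. Faulkner's result is proved directly from the geometry of octonion planes and is valid in all characteristics, so no separate verification in characteristics $2$ and $3$ is needed.

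Your approach via Kneser--Tits for isotropic inner $E_{6}$ is legitimate --- those cases are indeed settled in the literature you cite --- but it is heavier machinery, and it requires the extra step of pinning down the root subgroups inside $Instr(A)$. It is also worth noting that you are effectively reversing the paper's logical flow: the paper proves Theorem~\ref{reduced-tits-weiss} via Faulkner, then uses it to obtain $R$-triviality of $\mathbf{Str}(A)$ and finally of $\mathbf{Isom}(A)$ (Theorem~\ref{Isom-reduced}); the latter is exactly the Kneser--Tits statement you are taking as input. So your argument works, but what the paper's route buys is a self-contained and characteristic-free proof that does not presuppose the very $E_{6}$ result the paper is heading towards.
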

\begin{proof} This follows from a result of J. R. Faulkner (\cite{F1}, Thm. 1, \cite{F2}), that for any reduced Albert algebra $A$, the group $Isom(A)$ is simple modulo its center. Given $\psi\in Str(A)$, let $a:=\psi(1)$ and $\alpha:=N(a)$. Then $\alpha\neq 0$ and $U_a$ is invertible. We now consider $\chi:=R_{\alpha^{-1}}U_a\psi\in Str(A)$. Then $\chi\in Isom(A)$. Since $Instr(A)$ is a normal subgroup of $Str(A)$ and norm similarities of the form $U_{a_1}U_{a_2}...U_{a_r}\in Isom(A)$ whenever $\Pi N(a_i)=1$, it follows that $(Instr(A)\cap Isom(A)).Z\neq \{1\}$ and hence 
$$(Instr(A)\cap Isom(A)).Z=Isom(A),$$ 
where $Z$ is the center of $Isom(A)$ and consists of scalar homotheties $R_c$ with $c\in k^*,~c^3=1$. Therefore $\chi\in C. Instr(A)$ and hence $\psi\in C.Instr(A)$. This completes the proof.
\end{proof}
We have already shown in \S 4 that $Instr(A)\subset R(${\bf Str}$(A)(k))$. Hence it follows from the above theorem that $Str(A)\subset R(${\bf Str}$(A)(k))$. This inclusion is independent of the base field, hence 
$\text{\bf Str}(A)$ is $R$-trivial. We record this as 
\begin{theorem}\label{main-reduced} Let $A$ be a reduced Albert algebra over a field $k$. Then {\bf Str}$(A)$ is $R$-trivial.
\end{theorem}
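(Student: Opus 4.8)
The plan is to assemble the statement directly from two results already in hand, namely the Tits--Weiss decomposition for reduced algebras (Theorem \ref{reduced-tits-weiss}) and the $R$-triviality of the inner structure group (Theorem \ref{Instr}), and then to upgrade the resulting $k$-level inclusion to genuine $R$-triviality by observing that the hypothesis is stable under base change. First I would invoke Theorem \ref{reduced-tits-weiss} to write $Str(A)=C.Instr(A)$, where $C$ is the group of scalar homotheties $R_a~(a\in k^*)$ and $Instr(A)$ is the inner structure group generated by the $U$-operators. By Theorem \ref{Instr} we already know $Instr(A)\subset R(\text{\bf Str}(A)(k))$, so it only remains to place the factor $C$ inside $R(\text{\bf Str}(A)(k))$.

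For the factor $C$ I would argue exactly as in the proof of Theorem \ref{Str-R-triv}: the scalar homotheties constitute a one-dimensional $k$-split torus in $\text{\bf Str}(A)$, and such a torus is connected to the identity by $k$-rational curves, so by (\cite{Vos}, \S 16) its $k$-points lie in $R(\text{\bf Str}(A)(k))$. Since $R(\text{\bf Str}(A)(k))$ is a subgroup of $Str(A)$, the product $C.Instr(A)$ is contained in it, and therefore $Str(A)=C.Instr(A)\subset R(\text{\bf Str}(A)(k))$, that is, $Str(A)/R=\{1\}$.

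The crucial and only genuinely delicate step is to pass from this equality over $k$ to $R$-triviality, which by definition requires $\text{\bf Str}(A)(L)/R=\{1\}$ for \emph{every} field extension $L$ of $k$. Here the key observation is that reducedness is preserved under arbitrary base change: writing $A\cong\mathcal{H}_3(C,\gamma)$, we have $A\otimes_k L\cong\mathcal{H}_3(C_L,\gamma)$, which is again reduced (equivalently, the isotropy of the norm form persists over $L$). Consequently both Theorem \ref{reduced-tits-weiss} and Theorem \ref{Instr} apply verbatim to $A_L$, yielding $Str(A_L)=C_L.Instr(A_L)\subset R(\text{\bf Str}(A)(L))$; since $\text{\bf Str}(A)(L)=Str(A_L)$, this is precisely $\text{\bf Str}(A)(L)/R=\{1\}$ for all $L$, whence $\text{\bf Str}(A)$ is $R$-trivial. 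I expect no new computation to be needed: the argument is a direct combination of the cited theorems, and the only point deserving explicit mention is the base-change invariance of reducedness that legitimizes the phrase ``independent of the base field.''
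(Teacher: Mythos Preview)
Your proposal is correct and matches the paper's own argument essentially line for line: the paper uses Theorem \ref{reduced-tits-weiss} together with the inclusion $Instr(A)\subset R(\text{\bf Str}(A)(k))$ from \S 4 (Theorem \ref{Instr}) to obtain $Str(A)\subset R(\text{\bf Str}(A)(k))$, and then remarks that ``this inclusion is independent of the base field,'' which is exactly your base-change observation. Your write-up is in fact more explicit than the paper's (you spell out why $C\subset R(\text{\bf Str}(A)(k))$ and why reducedness persists over extensions), but the content is the same.
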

\begin{theorem} Let $A$ be an Albert algebra arising from the first Tits construction. Then any norm similarity of $A$ is a product of norm similarities, each stabilizing a $9$-dimensional subalgebra of $A$.
\begin{proof} First assume $A$ is split. Let $\psi\in Str(A)$ and $\psi(1)=a$. Then, $\chi:=R_{N(a)^{-1}}\circ U_a\circ\psi\in Isom(A)$ and $R_{N(a)^{-1}}\circ U_a\in Str(A)$ stabilizes any subalgebra of $A$ containing $a$. 
By a result of Faulkner cited above, $Isom(A)$ is a simple group modulo its center, which contains scalar homotheties. Hence $Isom(A)$ is generated by norm isometries which stabilize $9$-dimensional subalgebras and the result follows. Now assume $A$ is a division algebra. As above, we may assume $\psi\in Isom(A)$. As in the proof of Theorem \ref{Main}, we reduce to the case when $\psi\in Aut(A)$. Again, using the arguments in the proof of Theorem \ref{aut-R-triv}, we may assume that $\psi$ stabilizes a cubic cyclic extension $E\subset A$, i.e. $\psi\in Str(A,E)$. By Theorem \ref{str-gate}, 
$Str(A,E)\subset C. Instr(A).H$, where $C$ is the subgroup of scalar homoteties on $A$, $Instr(A)$ is the subgroup of $Str(A)$ generated by the $U$-operators of $A$ and $H$ is the subgroup of $Aut(A)$ generated by automorphisms that stabilize $9$-dimensional subalgebras of $A$. Hence $\psi$ admits the required factorization.  
\end{proof}
\end{theorem}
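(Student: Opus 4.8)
The plan is to use the basic dichotomy for first Tits construction Albert algebras: such an $A$ is either split or a division algebra. The single mechanism driving both cases is that scalar homotheties $R_c$ stabilize every subalgebra, while each $U$-operator $U_a$ (for $a$ invertible) stabilizes every subalgebra containing $a$, and $a$ always lies in some $9$-dimensional subalgebra (the cubic subalgebra $k[a]$ embeds into one). Thus every element of $C\cdot Instr(A)$ is automatically a product of norm similarities stabilizing $9$-dimensional subalgebras, and the task reduces to pushing an arbitrary $\psi\in Str(A)$ into this subgroup, possibly after multiplying by further $9$-dimensional stabilizers.

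When $A$ is split it is reduced, so Theorem \ref{reduced-tits-weiss} applies and gives $Str(A)=C\cdot Instr(A)$ outright. Writing $\psi=R_c\circ U_{a_1}\circ\cdots\circ U_{a_r}$ with each $a_i$ invertible then exhibits $\psi$ directly as a product of norm similarities each stabilizing a $9$-dimensional subalgebra, so the split case is immediate. Alternatively one may normalize $\chi:=R_{N(a)^{-1}}\circ U_a\circ\psi$, note $\nu(\chi)=N(a)^{-3}\cdot N(a)^{2}\cdot N(a)=1$ so that $\chi\in Isom(A)$, and invoke Faulkner's simplicity of $Isom(A)$ modulo its center, as in the proof of Theorem \ref{reduced-tits-weiss}.

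When $A$ is a division algebra I would chain three reductions, each stripping off a factor that stabilizes a $9$-dimensional subalgebra. First, mimic the proof of Theorem \ref{Main}: for $\psi$ with $a:=\psi(1)\notin k^{*}$, the cubic subfield $L=k(a)$ carries a first Tits process $S=J(L,\lambda)\subset A$ of dimension $9$, and the explicit $\chi\in Str(S)$ with $\chi(a)=1$ extends via Theorem \ref{ext-norm-sim} to $\widetilde\chi\in Str(A,S)$; then $\widetilde\chi\circ\psi\in Aut(A)$ while $\widetilde\chi$ stabilizes $S$. Second, following the proof of Theorem \ref{aut-R-triv}, reduce an automorphism to one stabilizing a fixed cubic cyclic $E\subset A$: when the image $\phi(E)\neq E$, the subalgebra generated by $E$ and $\phi(E)$ is $9$-dimensional (\cite{PR6}, Thm.\ 1.1), and the extension constructions of that proof write $\phi$ as a product of $9$-dimensional stabilizers times an element of $Aut(A,E)$. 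Third, invoke Theorem \ref{str-gate}: $Aut(A,E)\subset Str(A,E)\subset C\cdot Instr(A)\cdot H$, where $H$ is generated by automorphisms stabilizing $9$-dimensional subalgebras, so every remaining factor is again of the required type.

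The heart of the argument thus lies entirely in the previously established theorems, and the only genuine care needed is bookkeeping: one must check that each reduction contributes factors stabilizing a $9$-dimensional subalgebra and that the invertible element $a$ really does sit inside such a subalgebra in every case (immediate in the division case from \cite{PR2}, and harmless for scalars). I expect the subtlest point to be the split case if one insists on the Faulkner route rather than Theorem \ref{reduced-tits-weiss}, since there one must verify that the isometries stabilizing $9$-dimensional subalgebras generate all of $Isom(A)$ and that the central homotheties, which are themselves such stabilizers, are absorbed; routing through $Str(A)=C\cdot Instr(A)$ avoids this entirely.
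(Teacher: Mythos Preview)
Your proposal is correct and follows essentially the same route as the paper: the same dichotomy into split and division cases, and in the division case the identical chain of reductions via Theorem~\ref{Main}, Theorem~\ref{aut-R-triv}, and Theorem~\ref{str-gate}. The only cosmetic difference is in the split case, where you prefer to invoke Theorem~\ref{reduced-tits-weiss} directly to get $Str(A)=C\cdot Instr(A)$ and then observe that each $U_{a_i}$ stabilizes a $9$-dimensional subalgebra containing $a_i$, whereas the paper normalizes to $Isom(A)$ first and appeals to Faulkner's simplicity result; since Theorem~\ref{reduced-tits-weiss} is itself proved via Faulkner, the two are the same argument in slightly different packaging.
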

\begin{theorem}\label{Isom-reduced} Let $A$ be a reduced Albert algebra over a field $k$. Then {\bf Isom}$(A)$ is $R$-trivial. 
\end{theorem}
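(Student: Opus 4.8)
The plan is to exhibit a $k$-defined retraction of $\mathbf{Str}(A)$ onto $\mathbf{Isom}(A)$ and then to transport the $R$-triviality of $\mathbf{Str}(A)$, already established in Theorem~\ref{main-reduced}, across this retraction. Recall that $\mathbf{Isom}(A)$ is precisely the kernel of the similitude character $\nu:\mathbf{Str}(A)\to\mathbb{G}_m$, $\psi\mapsto\nu(\psi)$, appearing in the exact sequence recorded above. The essential point is that, because $A$ is reduced, its diagonal frame of orthogonal primitive idempotents $e_1,e_2,e_3\in A(k)$ with $e_1+e_2+e_3=1$ allows one to manufacture, algebraically in the parameter and degenerating to $1$, invertible elements of any prescribed nonzero norm; this is exactly what the anisotropic case lacks.

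Concretely, writing $A\cong\mathcal{H}_3(C,\gamma)$, for $\alpha\in\mathbb{G}_m$ I would set $d(\alpha):=\alpha e_1+e_2+e_3$, so that $N(d(\alpha))=\alpha$ and $d(1)=1$. I then define $r:\mathbf{Str}(A)\to\mathbf{Str}(A)$ by $r(\psi):=R_{\nu(\psi)^{-1}}\,U_{d(\nu(\psi))}\,\psi$. Since $\nu$ is a character valued in $\mathbb{G}_m$ it never vanishes, so $\psi\mapsto d(\nu(\psi))$ takes values in the invertible elements and $r$ is a genuine $k$-morphism, regular everywhere on $\mathbf{Str}(A)$. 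Using $\nu(R_c)=c^3$, $\nu(U_a)=N(a)^2$ and the multiplicativity of $\nu$, one checks $\nu(r(\psi))=\nu(\psi)^{-3}\,\nu(\psi)^{2}\,\nu(\psi)=1$, so $r$ in fact maps into $\mathbf{Isom}(A)=\ker\nu$. Moreover, if $\psi\in\mathbf{Isom}(A)$ then $\nu(\psi)=1$, whence $d(\nu(\psi))=1$ and $r(\psi)=R_1U_1\psi=\psi$; thus $r$ restricts to the identity on $\mathbf{Isom}(A)$, i.e. $r$ is a retraction.

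With $r$ in hand the conclusion is formal. Fix a field extension $L/k$ and $\psi\in\mathbf{Isom}(A)(L)$. By Theorem~\ref{main-reduced}, $\mathbf{Str}(A)$ is $R$-trivial, so there is a chain $x_0=\psi,x_1,\dots,x_n=1$ in $\mathbf{Str}(A)(L)$ together with rational maps $f_i:\mathbb{A}^1_L\to\mathbf{Str}(A)$, regular at $0,1$, with $f_i(0)=x_{i-1}$ and $f_i(1)=x_i$. Composing with the morphism $r$ gives rational maps $r\circ f_i:\mathbb{A}^1_L\to\mathbf{Isom}(A)$, again regular at $0$ and $1$, and the chain $r(x_0),\dots,r(x_n)$ now lies in $\mathbf{Isom}(A)(L)$ with endpoints $r(\psi)=\psi$ and $r(1)=1$. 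Hence $\psi$ is $R$-equivalent to $1$ in $\mathbf{Isom}(A)(L)$. As $A_L$ remains reduced and the entire construction is defined over $k$, this holds for every $L$, so $\mathbf{Isom}(A)$ is $R$-trivial. The sole place where reducedness is indispensable — and therefore the only real obstacle — is the construction of $d(\alpha)$: in the division case there need be no algebraic family of elements of prescribed norm specializing to $1$, which is exactly why that situation demanded the considerably more delicate subgroup analysis of \S5.
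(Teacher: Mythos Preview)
Your argument is correct. The retraction $r(\psi)=R_{\nu(\psi)^{-1}}U_{d(\nu(\psi))}\psi$ is a regular $k$-morphism $\mathbf{Str}(A)\to\mathbf{Isom}(A)$ restricting to the identity on $\mathbf{Isom}(A)$, and composing any $R$-equivalence chain in $\mathbf{Str}(A)$ with $r$ immediately yields one in $\mathbf{Isom}(A)$.

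The paper, however, does \emph{not} use the idempotent frame. It invokes verbatim the argument of Theorem~\ref{Isom}: given a rational path $\psi_t$ in $\mathbf{Str}(A)$ joining $\psi$ to $1$, set $a_t:=\psi_t(1)$, $\lambda_t:=N(a_t)$, and replace $\psi_t$ by $\chi_t:=R_{\lambda_t^{-1}}U_{a_0}^{-1}U_{a_t}\psi_t$. Since $a_t$ is automatically invertible (its norm is the similitude factor $\nu(\psi_t)\neq 0$), this correction is available regardless of whether $A$ is reduced or division. Thus the paper's passage from $\mathbf{Str}(A)$ to $\mathbf{Isom}(A)$ is uniform and path-dependent, whereas yours is a global, path-independent retraction that exploits the extra structure of the reduced case.

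One small correction to your closing remark: the ``considerably more delicate subgroup analysis of \S5'' is needed to prove that $\mathbf{Str}(A)$ is $R$-trivial in the division case (Theorem~\ref{Main}), not to descend from $\mathbf{Str}(A)$ to $\mathbf{Isom}(A)$. The latter step, as the proof of Theorem~\ref{Isom} shows, is just as easy in the division case as in the reduced one; your section $d(\alpha)$ is elegant but not where the real difficulty lies.
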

\begin{proof} Let $\psi\in Isom(A)=$ {\bf Isom}$(A)(k)\subset Str(A)$. By Theorem \ref{main-reduced}  {\bf Str}$(A)$ is $R$-trivial. Now, by an argument exactly as in the proof of Theorem \ref{Isom}, the result follows. 
\end{proof}
\section{\bf Retract rationality} Recall that a an irreducible variety $X$ defined over a field $k$ is {\bf retract $k$-rational} if there exists a non-empty open subset $U\subset X$, defined over $k$ such that the identity map of $U$ factors through an open subset $V$ of an affine space $\mathbb{A}^m_k$, i.e. there exists morphisms $f:U\rightarrow V$ and $r:V\rightarrow U$ such that $r\circ f=id_U$ (see 2.2, \cite{G}). The following result is proved in (\cite{G}, Prop. 5.4, Thm. 5.9):
\begin{theorem}\label{retract}Let $G$ be a semisimple, simply connected, absolutely almost simple group, defined and isotropic over an infinite field $k$, then the following are equivalent
\begin{enumerate}
\item $G$ is $W$-trivial, i.e. $W(F,G)=\{1\}$ for all field extensions $F/k$.
\item $G$ is a retract $k$-rational variety.
\end{enumerate}
\end{theorem}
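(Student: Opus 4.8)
The plan is to reduce the stated equivalence to a statement purely about $R$-equivalence, and then to characterise retract rationality through the $R$-equivalence class of the generic point of $G$. By the cited theorem of Gille ((\cite{G}, Thm. 7.2)), since $G$ is semisimple, simply connected, absolutely almost simple and isotropic, one has a natural isomorphism $W(F,G)\cong G(F)/R$ for every extension $F/k$; the same applies to $G_F$ over each such $F$. Hence condition (1), that $W(F,G)=\{1\}$ for all $F/k$, is equivalent to the assertion that $G$ is $R$-trivial, i.e. $G(F)/R=\{1\}$ for all $F/k$. Thus it suffices to prove that $G$ is retract $k$-rational if and only if $G$ is $R$-trivial, the role of isotropy being precisely to license this reformulation via Gille's result.

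The implication ``retract rational $\Rightarrow$ $R$-trivial'' I would treat first, as the softer direction. Retract rationality is preserved under base field extension, so it is enough to show that a retract $k$-rational group $G$ satisfies $G(k)/R=\{1\}$. Fix the retraction data: an open $U\subseteq G$ and an open $V\subseteq\mathbb{A}^m_k$ with morphisms $f\colon U\to V$, $r\colon V\to U$ and $r\circ f=\mathrm{id}_U$. Since $k$ is infinite and $V$ is open in affine space, any two $k$-points of $V$ can be joined by a chain of affine lines lying in $V$, so all $k$-points of $V$ are $R$-equivalent in $V$; applying the morphism $r$ transports such a chain into $G$, showing that any two points of $U(k)$ are $R$-equivalent in $G$. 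Using that the $R$-trivial elements $RG(k)$ form a normal subgroup (as recalled above) and that translations are $k$-automorphisms of the variety $G$ preserving $R$-equivalence, together with $G(k)=U(k)\cdot U(k)$ (valid because $U$ is dense, $G$ is unirational and $k$ is infinite), one deduces $G(k)=RG(k)$, i.e. $G(k)/R=\{1\}$.

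The implication ``$R$-trivial $\Rightarrow$ retract rational'' is the heart of the matter, and is where I expect the real difficulty. My plan is to use the generic-point criterion for retract rationality: $G$ is retract $k$-rational precisely when the tautological point $\eta\in G(k(G))$ (the image of the generic point of $G$) is $R$-equivalent over $k(G)$ to a constant point coming from $G(k)$. Granting this criterion, $R$-triviality applied to the particular extension $F=k(G)$ gives immediately that $G(k(G))/R=\{1\}$, so $\eta$ is $R$-equivalent to $1\in G(k)\subseteq G(k(G))$, and retract rationality follows. The nontrivial input is the criterion itself, and in particular the direction that produces a retraction from generic $R$-triviality: starting from a chain of $k(G)$-rational maps $\mathbb{A}^1\dashrightarrow G$ joining $\eta$ to a rational point, one must spread the chain out over a dense open subscheme of the base $G$, clear denominators, and verify regularity at the relevant points, so as to manufacture honest morphisms $f$ and $r$ realising the retraction on a genuine open neighbourhood.

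The main obstacle, therefore, is this spreading-out and descent step: converting the abstract $R$-equivalence of the generic point into explicit retraction morphisms defined over $k$ on an open set, while controlling fields of definition and loci of regularity. Here the presence of a rational point (the identity) and the group structure are essential, since translations let one normalise the situation and propagate the retraction from a neighbourhood of one point to a neighbourhood of the identity, and smoothness of $G$ guarantees that the generic behaviour extends to a dense open subscheme. This is exactly the content encapsulated in (\cite{G}, Prop. 5.4, Thm. 5.9), which I would invoke to complete the argument.
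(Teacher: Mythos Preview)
The paper does not actually prove this theorem: the sentence immediately preceding it reads ``The following result is proved in (\cite{G}, Prop. 5.4, Thm. 5.9)'', and the statement is simply quoted as an external input, with no argument supplied. So there is no ``paper's own proof'' to compare against.

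Your proposal is a reasonable outline of how the result is established, and indeed you arrive at the same destination: after reducing (1) to $R$-triviality via Gille's Theorem~7.2, you correctly identify the generic-point criterion as the key step and then explicitly invoke (\cite{G}, Prop.~5.4, Thm.~5.9) to carry it out. In other words, your sketch unpacks somewhat more than the paper does, but in the end rests on exactly the same citation. The one place to be a bit careful is the claim $G(k)=U(k)\cdot U(k)$ for a dense open $U$: this is fine here because $G$, being simply connected and isotropic over an infinite field, is unirational and hence has Zariski-dense $k$-points, but it is worth flagging that some such density input is genuinely needed.
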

\noindent
Combining this with our results, we have, 
\begin{corollary} Let $G$ be a simple, simply connected algebraic group defined over a field $k$ with Tits index $E^{78}_{8,2}$ or $E^{78}_{7,1}$ whose anisotropic kernel splits over a cubic extension of $k$. Then $G$ is retract $k$-rational.
\end{corollary}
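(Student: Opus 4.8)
The plan is to deduce retract $k$-rationality directly from Gille's criterion recorded in Theorem \ref{retract}, which equates retract $k$-rationality with $W$-triviality for semisimple, simply connected, absolutely almost simple, \emph{isotropic} groups. Since $G$ has Tits index $E^{78}_{8,2}$ or $E^{78}_{7,1}$, it is absolutely almost simple (of type $E_8$, resp. $E_7$), and since its index carries distinguished vertices, $G$ is $k$-isotropic; so the hypotheses of Theorem \ref{retract} are met. Thus the only thing left to establish is that $G$ is $W$-trivial, i.e. $W(F,G)=\{1\}$ for every field extension $F/k$.

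First I would invoke Gille's theorem (\cite{G}, Thm. 7.2), which gives $W(F,G)\cong G(F)/R$ whenever $G$ is isotropic over $F$. Hence $W$-triviality reduces to two inputs: that $G$ remains isotropic over every extension $F/k$, and that $G(F)/R=\{1\}$ for each such $F$. The second is precisely the $R$-triviality already in hand: by Corollary \ref{an-kernel}, the hypothesis that the anisotropic kernel of $G$ splits over a cubic extension forces that kernel to be $\text{\bf Isom}(A)$ for a first Tits construction Albert division algebra $A$, and then Theorem \ref{Weiss} (for index $E^{78}_{8,2}$) or Theorem \ref{Koecher} (for index $E^{78}_{7,1}$) supplies $R$-triviality.

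The step requiring care is the preservation of isotropy under arbitrary base change, since the identification $W(F,G)\cong G(F)/R$ and the very statement of $W$-triviality presuppose that $G$ stays isotropic over $F$. Here I would exploit the dichotomy special to first Tits constructions: because the invariant $f_3(A)$ vanishes and is preserved under base change, $A\otimes_k F$ is again a first construction, hence is either split or a division algebra. If $A\otimes_k F$ splits, then $G$ splits over $F$, so it is isotropic (of full rank) and $W(F,G)=\{1\}$; if $A\otimes_k F$ is division, then the Tits index of $G$ over $F$ is unchanged, so $G$ remains isotropic of the same relative rank, and $W(F,G)\cong G(F)/R=\{1\}$ by $R$-triviality. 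This is exactly the case analysis already carried out in the proof of Theorem \ref{Weiss}, now applied uniformly across all $F$.

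Assembling these observations, $G$ is $W$-trivial, and Theorem \ref{retract} then yields that $G$ is retract $k$-rational. The main obstacle is conceptual rather than computational: one must be certain that $W$-triviality, a condition ranging over \emph{all} extensions $F$ (including those where a priori the relative rank might drop), genuinely follows from the $R$-triviality theorems. The resolution is the remark that a first construction Albert algebra never degenerates to a non-split reduced algebra under base change, so isotropy of $G$ is never lost and Gille's comparison $W(F,G)\cong G(F)/R$ applies over every $F$.
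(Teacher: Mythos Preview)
Your proposal is correct and follows essentially the same route as the paper: invoke Corollary \ref{an-kernel} to identify the anisotropic kernel with $\text{\bf Isom}(A)$ for a first Tits construction $A$, use the split/division dichotomy for $A\otimes_k F$ (the paper cites \cite{PR2}, Cor.~4.2 rather than the $f_3$-invariant, but this is the same fact), handle each case via Theorems \ref{Weiss} and \ref{Koecher}, and conclude by Theorem \ref{retract}. Your exposition is if anything more explicit about checking that $G$ remains isotropic over every $F$ so that Gille's identification $W(F,G)\cong G(F)/R$ applies, a point the paper leaves implicit.
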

\begin{proof} By Corollary \ref{an-kernel}, the anisotropic kernel of $G$ is isomorphic to {\bf Isom}$(A)$ for a first Tits construction Albert division algebra $A$ over $k$. Since if $A$ is a first Tits construction, for any field extension $F$ of $k$, $A\otimes_kF$ is either split or a division algebra (see \cite{PR2}, Cor. 4.2). Let $A_F:=A\otimes_kF$. 
By Theorem \ref{reduced-tits-weiss}, $Str(A_F)=C.Instr(A_F)$ when $A_F$ is split and by our result on Tits-Weiss conjecture, $Str(A_F)=C.Instr(A_F)$ also when $A_F$ is a division algebra. Hence, by Theorem \ref{Weiss} and Theorem \ref{Koecher} we have $W(F,G)=\{1\}$. This proves the assertion on $G$.
\end{proof}
\noindent
\begin{theorem} Let $A$ be a reduced Albert algebra over a field $k$. Then {\bf Isom}$(A)$ is retract rational.
\end{theorem}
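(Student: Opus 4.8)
The plan is to deduce retract $k$-rationality from the already established $R$-triviality (Theorem \ref{Isom-reduced}) by passing through $W$-triviality and invoking Gille's criterion (Theorem \ref{retract}). The strategy is essentially formal once the structural hypotheses are in place, since all the analytic content has been absorbed into Theorem \ref{Isom-reduced}.

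First I would record the structural facts. Since $A$ is a reduced Albert algebra, its norm form $N$ is isotropic, and {\bf Isom}$(A)$ is a connected, simple, simply connected absolutely almost simple algebraic group of type $E_6$ defined over $k$ (by \cite{SV}, Thm. 7.3.2). Isotropy of $N$ forces {\bf Isom}$(A)$ to be $k$-isotropic, so the standing hypotheses of Gille's equivalence are met over $k$. Moreover, a reduced Albert algebra remains reduced (or becomes split) under any field extension $F/k$: writing $A \cong \mathcal{H}_3(C,\gamma)$, base change gives $A\otimes_k F \cong \mathcal{H}_3(C\otimes_k F,\gamma)$, which is again reduced. Consequently {\bf Isom}$(A)$ is isotropic over every field extension $F$ of $k$, not merely over $k$ itself.

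Next I would upgrade $R$-triviality to $W$-triviality. By Theorem \ref{Isom-reduced}, $\text{\bf Isom}(A)(F)/R = \{1\}$ for every field extension $F/k$. Since {\bf Isom}$(A)$ is semisimple, simply connected, absolutely almost simple and isotropic over each such $F$, the theorem of Gille (\cite{G}, Thm. 7.2) yields $W(F,\text{\bf Isom}(A)) \cong \text{\bf Isom}(A)(F)/R = \{1\}$. Hence {\bf Isom}$(A)$ is $W$-trivial. Applying Theorem \ref{retract} (\cite{G}, Prop. 5.4, Thm. 5.9) with $G = \text{\bf Isom}(A)$, the equivalence of $W$-triviality and retract $k$-rationality for isotropic groups of this kind then gives that {\bf Isom}$(A)$ is retract $k$-rational, as required.

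The only genuine point requiring care, and hence the main (if modest) obstacle, is verifying that {\bf Isom}$(A)$ stays isotropic over \emph{all} field extensions $F$, because both Gille's identification $W(F,G)\cong G(F)/R$ and the retract-rationality criterion presuppose isotropy over the field in question. This is precisely what the persistence of the reduced structure under base change supplies, so no further computation is needed; the substantive work has already been carried out in establishing the $R$-triviality of {\bf Isom}$(A)$ in Theorem \ref{Isom-reduced}.
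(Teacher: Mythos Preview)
Your proof is correct and follows essentially the same route as the paper: verify that {\bf Isom}$(A)$ is simple, simply connected and $k$-isotropic, invoke the $R$-triviality established in Theorem \ref{Isom-reduced}, and then apply Gille's criterion (Theorem \ref{retract}). Your extra verification that isotropy persists over every extension $F/k$ is harmless but unnecessary, since a $k$-isotropic group is automatically $F$-isotropic for any field extension $F$ of $k$.
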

\begin{proof} Note that when $A$ is a reduced Albert algebra, the algebraic group {\bf Isom}$(A)$ is simple, simply connected and $k$-isotropic. The assertion now follows from the $R$-triviality of this group, settled in Theorem \ref{Isom-reduced}.
\end{proof}
\vskip1mm
\noindent
{\bf Acknowledgement}\\
\noindent
Part of this work was done when I visited Linus Kramer of the Mathematics Institute, University of Muenster, in March 2016. The stay was supported by the Deutsche Forschungsgemeinschaft 
through SFB 878. His help is gratefully acknowledged. I thank Richard Weiss and Tom De Medts for some very useful discussions and suggestions. I have immensely benefitted from discussions with Bernhard Muhlherr, Gopal Prasad and Dipendra Prasad. I thank Holger Petersson and Otmar Loos for their encouragement and interest in this work.  

\vskip5mm

\end{document}